\documentclass[11pt,oneside]{amsart}
\usepackage{graphicx}
\usepackage{psfrag}
\usepackage{amscd}
\usepackage{epsfig}
\usepackage{amsmath,ifthen, amsfonts, amssymb, amsopn,amsthm}

\usepackage{color}
\newtheorem{thm}{Theorem}[section]
\newtheorem{lem}[thm]{Lemma}

\newtheorem{cor}[thm]{Corollary}

\newtheorem{prop}[thm]{Proposition}

\newtheorem{fact}[thm]{Fact}

\theoremstyle{definition}
\newtheorem{defn}[thm]{Definition}

\newtheorem{prob}[thm]{Problem}

\begin{document}
\title[Hyperbolic Space and Its Medianization]{An Upper Bound for Hausdorff Distance Between Finite-Dimensional Hyperbolic Space and Its Medianization}
\author{Yongbin Zhou}

\begin{abstract}
We use de Sitter space to construct a concrete model for the measured wall structure of finite-dimensional hyperbolic space in hyperbolic model $\mathbb{I}^n$, which will induce a medianization $\mathcal M(\mathbb{I}^n)$. We get an upper bound for the Hausdorff distance between $\mathbb{I}^n$ and $\mathcal M(\mathbb{I}^n)$.

\end{abstract}

\maketitle

\section{Introduction}
We know that a measured wall structure induces a medianization. That is, given a measured wall structure on a space, we can construct a median space and isometrically embed the original space into the median space. We show that points in de Sitter space $\mathrm{dS}^n$ are in one-to-one correspondence with open half-spaces of $\mathbb{I}^n$, and define the medianization of $\mathbb{I}^n$ in a concrete way. As an application, we calculate an upper bound for the the Hausdorff distance between a finite-dimensional hyperbolic space and its medianization, which was proven to be finite in ~\cite{WALL}.

The \emph{Hausdorff distance} between two nonempty sets $A,B \subseteq (X,d)$ is
$
d_H(A,B)
=
\max\left\{
\sup_{a\in A}\inf_{b\in B} d(a,b),
\;
\sup_{b\in B}\inf_{a\in A} d(a,b)
\right\}
$
If $A \subseteq B$, then
$
d_H(A,B)
=
\sup_{b\in B}\inf_{a\in A} d(a,b).
$ So we describe how larger the medianization is than $\mathbb{I}^n$.

In this text, we often consider \emph{hyperboloid model} $\mathbb {I}^n$. We know that any totally geodesic embedded $\mathbb{I}^{n-1}$ is always an intersection between $\mathbb{I}^n$ and a $(n-1)$ dimension hyperplane in $\mathbb R^{n+1}$, and the embedded $\mathbb{I}^{n-1}$ (or hyperplane) will divide the $\mathbb{I}^n$ into two open half spaces. For every such open half-space $h$, the $h^c \in \mathbb I^{n}$ is a closed half-space. We consider half-spaces of $\mathbb{I}^n$ to be open and closed half-spaces obtained like this, and denote $\mathcal{H}$ the \emph{space of half-spaces}. $\{h,h^c\}$ is a partition of $\mathbb{I}^n$. We consider walls of $\mathbb{I}^n$ to be these partitions, and denote $\mathcal W$ the \emph{space of walls}. We say that the half-spaces $h$ and $h^c$ \emph{bound} the wall $\{h,h^c\}$, following the usage of ~\cite{WALL}. We also abuse the notation to say that the open half-space and closed half-space \emph{bound} this hyperplane. We say that two half-spaces $h_1$ and $h_2$ \emph{share} a wall if they bound the same embbed $\mathbb{I}^{n-1}$ (or hyperplane).

For any two points $x,y\in \mathbb{I}^n$, when an open half-space $h$ contains exactly one of them, we say $h$ saparates $x$ and $y$, and the wall $\{h,h^c\}$ saparates $x$ and $y$. Let $H(x|y)$ denote the set of half-spaces saparating $x$ and $y$, and $W(x|y)$ denote the set of walls saparating $x$ and $y$. We use $\triangle$ to note the symmetric differece operator for two sets.

Briefly speaking, we identify distance from an admissible section $\tau$ in medianization to a point section as an integration over the sphere $S^{n-1}$. The integrand is controlled by an odd function $b(\omega)$ with $\omega \in S^{n-1}$, and $\beta=\arcsin(\tanh b)$ is $1$-Lipschitz.
\begin{lem}[Section Distance and Border Function]
\label{lem:section distance and border function}
\[d(\tau,\sigma_x)=\mu_{\mathrm{dS}^n}(W^{border}_{\tau}\triangle \sigma_x)=\int_{S^{n-1}}
\int_0^{|b(\omega)|}
\cosh^{n-1}r
\,dr\,d\omega\] where the notation $\sigma_x$ is also abused to note the open half-spaces containing $x$, besides the point section.
\end{lem}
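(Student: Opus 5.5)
We reduce the distance in the medianization to a measure computation in de Sitter space, and then evaluate that measure in coordinates adapted to $x$. First we use the identification of open half-spaces of $\mathbb{I}^n$ with points of $\mathrm{dS}^n=\{p\in\mathbb R^{n+1}:\langle p,p\rangle=1\}$: the point $p$ corresponds to $h_p=\{y\in\mathbb{I}^n:\langle y,p\rangle>0\}$. The wall measure is the $O(n,1)$-invariant volume $\mu_{\mathrm{dS}^n}$, normalized so that $d(x,y)=\mu(H(x|y))$ for point sections.

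By the construction of the medianization, the distance between two admissible sections is the measure of their symmetric difference. So $d(\tau,\sigma_x)=\mu(\tau\triangle\sigma_x)$, where $\sigma_x=\{p:\langle x,p\rangle>0\}$. We write $W^{border}_\tau$ for the set of open half-spaces that $\tau$ selects, viewed as a subset of $\mathrm{dS}^n$. Then the first equality is simply a restatement of the definition.

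For the integral formula, we apply an isometry to put $x=e_0=(1,0,\dots,0)$. We parametrize $\mathrm{dS}^n$ by $p=(\sinh r,\cosh r\,\omega)$ with $r\in\mathbb R$ and $\omega\in S^{n-1}$. The induced invariant volume in these coordinates is $\cosh^{n-1}r\,dr\,d\omega$. The condition $\langle x,p\rangle>0$ picks out one sign of $r$, so $\sigma_x$ is the half $\{r<0\}$ in the Minkowski sign convention. Thus the boundary of $\sigma_x$ is the sphere $r=0$, which consists of the hyperplanes through $x$.

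The key step is to show that for each direction $\omega$, the slice of $W^{border}_\tau$ along the line $r\mapsto(\sinh r,\cosh r\,\omega)$ is a half-line with a single threshold $b(\omega)$. These half-spaces are nested: for fixed $\omega$, the half-spaces $h_{(\sinh r,\cosh r\,\omega)}$ are bounded by parallel hyperplanes orthogonal to the geodesic from $x$ in direction $\omega$. They are monotone in $r$, at signed distance $r$ from $x$. Admissibility of $\tau$ (it is an ultrafilter-like section, closed upward under inclusion of half-spaces) then forces $\tau$ to contain exactly those with $r<b(\omega)$ for some $b(\omega)\in[-\infty,\infty]$. Measure-zero ambiguity at the endpoint does not affect the measure.

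The half-spaces at $(r,\omega)$ and $(-r,-\omega)$ are complementary up to their boundary, and $\tau$ picks exactly one of each wall. This gives the consistency $b(-\omega)=-b(\omega)$, i.e., $b$ is odd. Then the symmetric difference on the $\omega$-line is the interval between $0$ and $b(\omega)$. Its measure is $\int_0^{|b(\omega)|}\cosh^{n-1}r\,dr$ by evenness of $\cosh$. Integrating over $\omega$ by Fubini (justified by nonnegativity) gives the formula.

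The main obstacle is the threshold step: checking that the nested family along each $\omega$-line is totally ordered by inclusion. We also need the sign convention to match so that the threshold lies at $b(\omega)$ relative to $0$. We also have to handle possibly infinite $b$, which gives infinite distance and so does not occur for admissible $\tau$ at finite distance.
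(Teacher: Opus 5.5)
Your proposal is correct and is essentially the paper's argument. The paper obtains the threshold $b(\omega)$ along each geodesic through the base point from Lemma 5.3 of \cite{WALL}, whereas you derive it directly from the nesting of the walls $P_{u(r,\omega)}$ orthogonal to $\gamma_\omega$ together with upward closure; both then use oddness of $b$ and read off $\tau\triangle\sigma_x$ as the slab between $r=0$ and $r=b(\omega)$, integrated against $\cosh^{n-1}r\,dr\,d\omega$. Two remarks. First, your orientation of the slice as $\{r<b(\omega)\}$ is the one actually consistent with $\sigma_x=\{r<0\}$; the paper's $\{r>b(\omega)\}$ comes from a sign slip in its modified inclusion condition, which does not affect $|b|$. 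Second, as in the paper, the first equality holds only up to the normalizing factor $\frac{n-1}{2\operatorname{vol}(S^{n-2})}$, so you should not use your ``normalized'' $\mu$ interchangeably with the unnormalized density $\cosh^{n-1}r$.
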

 Notice that one can let $b\ne0$ almost everywhere to make it large, just make sure it satisfies that 
\begin{thm}
\label{thm: border correspondence lip}
Elements of $\mathcal M(\mathbb{I}^n)$ are almost in one-to-one correspondence with the set of such $1$-Lipschitz odd functions.
\end{thm}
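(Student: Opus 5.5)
We read the statement as follows. "Such functions" are the functions $\beta:S^{n-1}\to(-\pi/2,\pi/2)$ (possibly reaching $\pm\pi/2$) that are odd, $\beta(-\omega)=-\beta(\omega)$, and $1$-Lipschitz for the spherical metric. They are related to border functions by $\beta=\arcsin(\tanh b)$. "Almost" means up to sets of $\mu_{\mathrm{dS}^n}$-measure zero, together with the finiteness condition of Lemma~\ref{lem:section distance and border function}.

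The plan is to pass through de Sitter space. We identify open half-spaces with points $v\in\mathrm{dS}^n$ via $h_v=\{x\in\mathbb I^n:\langle x,v\rangle>0\}$, and fix a base point $o=(1,0,\dots,0)$. Each $v$ is then written as $v=(\sinh r,\cosh r\,\omega)$ with $\omega\in S^{n-1}$ and $r\in\mathbb R$, where $r$ is the signed distance from $o$ to the bounding hyperplane. In these coordinates the measure is $\cosh^{n-1}r\,dr\,d\omega$, which is exactly the density in Lemma~\ref{lem:section distance and border function}.

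\textbf{From sections to functions.} Let $\tau$ be an admissible section, that is, an ultrafilter-like choice of one half-space per wall with $\mu(\tau\triangle\sigma_o)<\infty$. Fix a direction $\omega$ and consider the half-spaces $h_{(\sinh r,\cosh r\,\omega)}$. As $r$ increases they are nested and decreasing. Since $\tau$ is consistent (it cannot contain two disjoint half-spaces, and if $h\subset h'$ and $h\in\tau$ then $h'\in\tau$, up to null sets), the set of $r$ for which $h$ lies in $\tau$ is a ray, up to measure zero. Its endpoint defines $b(\omega)$. The walls with parameter $-\omega$ are the same walls with the opposite orientation, which forces $b(-\omega)=-b(\omega)$. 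This recovers the description $W^{border}_\tau$ of Lemma~\ref{lem:section distance and border function}. Conversely, a function $b$ defines a section by taking the half-space on the appropriate side of $b(\omega)$ in each direction.

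\textbf{Key step: consistency equals Lipschitz.} We must show that the section built from $b$ is consistent exactly when $\beta=\arcsin\tanh b$ is $1$-Lipschitz. Two half-spaces $h_{(r_1,\omega_1)}$ and $h_{(r_2,\omega_2)}$ are disjoint, or nested, exactly when their boundary hyperplanes do not meet. Intersecting with the plane spanned by $o$, $\omega_1$ and $\omega_2$ reduces this to $\mathbb I^2$. In the Klein model centred at $o$, the hyperplane at signed distance $r$ in direction $\omega$ sits at Euclidean distance $\tanh r$. In the Poincaré/boundary picture its endpoints on the sphere at infinity form the spherical cap around $\omega$ of angular radius $\pi/2-\arcsin(\tanh r)$. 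Two such caps are nested or disjoint iff
\[
|\angle(\omega_1,\omega_2)|\;\ge\;|\beta_1-\beta_2|\quad\text{or}\quad \angle(\omega_1,\omega_2)\le\ldots,
\]
the standard cap-containment condition. Requiring that the chosen half-spaces at the border pairwise never violate the filter properties translates to $|\beta(\omega_1)-\beta(\omega_2)|\le d_{S^{n-1}}(\omega_1,\omega_2)$. Working out the sign conventions in this cap computation is where I expect most of the care to be needed.

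\textbf{Bijectivity and finiteness.} Distinct $1$-Lipschitz odd $\beta$ differ on an open set, so the corresponding sections differ on a set of positive measure. Sections agreeing up to null sets give the same $b$ almost everywhere, and hence the same $b$ by continuity. Admissibility, i.e.\ finite distance to $\sigma_x$, is then exactly the finiteness of the integral in Lemma~\ref{lem:section distance and border function}. This integral is finite automatically whenever $|\beta|<\pi/2$ everywhere, and this condition is precisely the "almost" caveat in the correspondence.

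The main obstacle is the converse direction of the key step: showing that an arbitrary measurable consistent section, with no regularity assumed, has a border that is a genuine function of $\omega$ rather than a mess. This requires checking the monotone-ray property for almost every $\omega$ via Fubini, and then upgrading "Lipschitz almost everywhere" to an honest Lipschitz representative.
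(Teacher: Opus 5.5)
Your outline follows the paper's proof. For fixed $\omega$ the half-spaces $H_{u(r,\omega)}$ are totally ordered by inclusion, so upward closure makes the chosen $r$'s a down-ray; the paper gets this from Lemma~\ref{lem:border}. Oddness comes from $-u(r,\omega)=u(-r,-\omega)$. Admissibility is matched with the $1$-Lipschitz property of $\beta=\arcsin(\tanh b)$ through an inclusion criterion, and finite distance rules out $b=\pm\infty$.

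The one genuinely different ingredient is how you get the inclusion criterion. The paper computes it from Proposition~\ref{prop:Half-Space Inclusion}: $u_0\ge v_0$ and $\langle u,v\rangle\ge 1$, rewritten as $\cos\alpha\ge\cos(\rho-\rho')$. You read it off the sphere at infinity, where $H_{u(\rho,\omega)}$ has ideal boundary the open cap $C(\omega,\tfrac\pi2-\rho)$ of radius $\tfrac\pi2-\rho$ about $\omega$. This explains why $\arcsin\tanh$ is the natural coordinate. It also fixes the orientation unambiguously: larger $\rho$ means a smaller half-space. By contrast, the paper writes $\sigma_b=\{r>b\}$ and states the modified inclusion condition as $\rho'-\rho\ge d$. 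These are two sign slips relative to $\sigma_o=\{r<0\}$ and Proposition~\ref{prop:Half-Space Inclusion}, and they cancel.

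However, the key step is not carried out, and the condition you display is wrong. Caps of radii $\tfrac\pi2-\beta_1$ and $\tfrac\pi2-\beta_2$ are nested iff $\angle(\omega_1,\omega_2)\le|\beta_1-\beta_2|$, not $\ge$. They are disjoint iff $\angle(\omega_1,\omega_2)\ge\pi-\beta_1-\beta_2$. In any case the dichotomy ``nested or disjoint'' is not what you need. You need the one-sided criterion
\[
H_{u(\rho,\omega)}\subseteq H_{u(\rho',\eta)}\iff d_{S^{n-1}}(\omega,\eta)\le\rho-\rho',
\]
which is just cap containment: $C(\omega,a)\subseteq C(\eta,a')\iff d(\omega,\eta)+a\le a'$. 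Then translate: $\{u(\rho,\omega):\rho<\beta(\omega)\}$ is upward closed iff $\rho<\beta(\omega)$ and $\rho'\le\rho-d(\omega,\eta)$ always force $\rho'<\beta(\eta)$. Letting $\rho\uparrow\beta(\omega)$, this is exactly $\beta(\omega)-\beta(\eta)\le d(\omega,\eta)$ for all $\omega,\eta$.

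Two further points.

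First, for membership in $\mathcal M(\mathbb{I}^n)$ you only showed that $|\beta|<\tfrac\pi2$ gives a finite integral. You also need the converse. Suppose $\beta(\omega_0)=\tfrac\pi2$ and write $\delta=d(\omega,\omega_0)$. The Lipschitz bound gives $b(\omega)\ge\log\cot(\delta/2)$. Then the integrand of Lemma~\ref{lem:section distance and border function} is $\gtrsim\delta^{-(n-1)}$, which is not integrable against $\sin^{n-2}\delta\,d\delta$. The paper uses Proposition~\ref{prop:large section change} at this point instead.

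Second, your ``main obstacle'' does not arise under the paper's definitions. Admissibility there is exact upward closure, so the down-ray property holds for every $\omega$ and the Lipschitz inequality holds pointwise. No Fubini argument or a.e.\ upgrade is needed. The ``almost'' in the statement only concerns null sets such as the border $\{u(b(\omega),\omega)\}$.
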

Another thing used is that the integration can be arbitrarily large if we move the compared point far enough, so we can take the point to minimize the integration, and this condition gives an equation
\begin{lem}[Locally Minimum Condition]
\label{lem:locally minimum condition}
When $n\ge$ 2, after we choose that point $x$ minimizing $d(\tau_b,\sigma_x)$, and $b\ne 0$ almost everywhere, we get
\[
\int_{S^{n-1}}\operatorname{sgn}(b(\omega))\,\omega\,d\omega=0\]

\end{lem}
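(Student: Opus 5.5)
The plan is to treat $F(x)=d(\tau_b,\sigma_x)$ as a function of $x\in\mathbb{I}^n$ and to differentiate it at its minimizer. By Lemma~\ref{lem:section distance and border function}, $F$ is written in terms of the border function of $\tau$ relative to the point $x$. For $x$ equal to the base point $o=(1,0,\dots,0)$,
\[
F(o)=\int_{S^{n-1}}\int_0^{|b(\omega)|}\cosh^{n-1}r\,dr\,d\omega .
\]
Here $b(\omega)$ is the signed distance along the geodesic ray from $o$ in direction $\omega$ to the boundary hyperplane of the border wall in that direction. By applying an isometry we may take the minimizer to be $x=o$. We then consider the one-parameter family $x_t=\exp_o(tv)$ for a unit vector $v\in T_o\mathbb{I}^n\cong\mathbb{R}^n$. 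Minimality gives $\frac{d}{dt}\big|_{t=0}F(x_t)=0$, provided the derivative exists. Since $v$ is arbitrary, it suffices to show that this derivative equals a nonzero constant times $\langle v,\int_{S^{n-1}}\operatorname{sgn}(b(\omega))\,\omega\,d\omega\rangle$.

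The first step is to avoid re-parametrizing $b$ about $x_t$. Instead, write $F(x)=\mu_{\mathrm{dS}^n}(W^{border}_\tau\triangle\sigma_x)$ directly and compute the change of this measure. Moving $x$ from $o$ to $x_t$ changes $\sigma_x$ only on the half-spaces whose boundary hyperplane separates $o$ from $x_t$.

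Parametrize such hyperplanes by the unit normal $\omega$ and the signed distance $r$ from $o$, with measure $\cosh^{n-1}r\,dr\,d\omega$; this is the measure already used in the lemma. The hyperplane $(\omega,r)$ separates $o$ and $x_t$ to first order exactly when $0<r<t\langle v,\omega\rangle+o(t)$, or the analogous condition with signs reversed. For each such wall, exactly one of its two half-spaces enters or leaves $\sigma_x$. That change either adds to or removes from the symmetric difference, according to whether the border section $\tau$ selects the half-space containing $o$ or the other one.

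The key observation is that near $r=0$ the border section selects according to $\operatorname{sgn}(b(\omega))$. The hyperplanes through $o$ with normal $\omega$ lie inside the region $\{0<r<|b(\omega)|\}$ exactly when $b(\omega)\neq0$, and on which side they lie is given by the sign of $b(\omega)$. Hence the first-order change is
\[
F(x_t)-F(o)=-\,t\int_{S^{n-1}}\operatorname{sgn}(b(\omega))\,\langle v,\omega\rangle\,d\omega+o(t),
\]
up to a global sign. The weight contributes $\cosh^{n-1}0=1$, and we use that $b\neq0$ almost everywhere. Setting the derivative to zero for all $v$ gives the claim. The hypothesis $n\ge2$ makes $S^{n-1}$ a connected sphere with $\int\omega=0$ over it. It also makes the first-order slab-volume computation valid, because the set of $\omega$ with $\langle v,\omega\rangle=0$ is null.

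The main obstacle is justifying differentiability, that is, the $o(t)$ term. The sign of $b$ must be locally constant near $r=0$ for a.e.\ $\omega$, in a form uniform enough for dominated convergence. I would get this from the $1$-Lipschitz property of $\beta=\arcsin(\tanh b)$ given by Theorem~\ref{thm: border correspondence lip}. By that property, $\{\omega: |b(\omega)|<\varepsilon\}$ shrinks to the null set $\{b=0\}$ as $\varepsilon\to0$. The contribution of this set to the difference quotient is therefore bounded by $t\cdot\mu(\{|b|<\varepsilon\})$, which tends to zero. Two further points need care. A minimizer exists because $F\to\infty$ at infinity. If $F$ is only one-sidedly differentiable, the one-sided derivatives in directions $\pm v$ both being nonnegative already forces the linear term to vanish.
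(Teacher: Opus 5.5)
Your argument is correct and is essentially the paper's own ``Easy Proof of Lemma 1.3'' in the Discussion section. That proof keeps the de Sitter coordinates centered at the minimizer, notes that $\sigma_{x_t}$ has dividing point $a_t(\omega)=\operatorname{artanh}(\tanh t\,\langle v,\omega\rangle)$, and differentiates $\int_{S^{n-1}}\bigl|\int_{a_t(\omega)}^{b(\omega)}\cosh^{n-1}r\,dr\bigr|\,d\omega$ at $t=0$, so, like you, it bypasses the main proof's moved-base-point lemma and the integration by parts with $Q$.

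Your $\{|b|<\varepsilon\}$ estimate supplies the differentiability justification that the paper skips. Two small corrections:
\begin{itemize}
\item The estimate needs only measurability of $b$ and $b\neq0$ a.e., not the Lipschitz property.
\item The bound $t\cdot\mu(\{|b|<\varepsilon\})$ (up to a constant) controls the \emph{difference} $F(x_t)-F(o)$, not the difference quotient. The quotient's error is therefore $O(\mu(\{|b|<\varepsilon\}))$, which still suffices after letting $t\to0$ and then $\varepsilon\to0$.
\end{itemize}
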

This would turn the upper bound question into an analysis form:
\begin{prob}
\label{prob:anal form}
Let \(n\ge 2\), and $b:S^{n-1}\to\mathbb R$ be a finite-valued odd function. Assume that:
\begin{enumerate}
\item $\beta(\omega)=\arcsin(\tanh b(\omega))$ is \(1\)-Lipschitz on \(S^{n-1}\) with respect to the standard spherical metric.
\item ($b\ne 0\ a.e.$ and) \(b\) satisfies the locally minimum condition
\[
\int_{S^{n-1}}
\operatorname{sgn}(b(\omega))
\omega
d\omega
=
0.
\]
\end{enumerate}

Consider the functional
\[
\mathcal E(b)
=
\int_{S^{n-1}}
\left(
\int_{0}^{|b(\omega)|}
\cosh^{\,n-1}(r)\,dr
\right)
d\omega.
\]

Does there exist a constant \(C_n<\infty\), depending only on \(n\), such that
$
\mathcal E(b)\le C_n
$
for every function \(b\) satisfying the above assumptions?
\end{prob}

\begin{thm}
\label{thm:hausdorff bound}
\[
d_\mathcal M(\mathbb{I}^n,\mathcal M(\mathbb{I}^n))
\le
\sqrt{\pi} 
\frac{n-1}{2}
\, \frac{\Gamma\left(\frac{n-1}{2}\right)}{\Gamma\left(\frac{n}{2}\right)}
\int_0^{
\operatorname{artanh}(2^{-1/(n-1)})
}
\cosh^{n-1}r
\,dr
\]
\end{thm}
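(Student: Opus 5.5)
The plan is to reduce Theorem~\ref{thm:hausdorff bound} to Problem~\ref{prob:anal form} and then bound $\mathcal E(b)$ directly. Since $\mathbb{I}^n\subseteq\mathcal M(\mathbb{I}^n)$, the Hausdorff distance is $\sup_\tau\inf_x d(\tau,\sigma_x)$. By Theorem~\ref{thm: border correspondence lip}, every $\tau$ (up to the ``almost'' ambiguity, which I would dispose of by density) is $\tau_b$ for an odd $b$ with $\beta=\arcsin(\tanh b)$ $1$-Lipschitz. By Lemma~\ref{lem:section distance and border function}, $d(\tau_b,\sigma_x)=\mathcal E(b)$ in the coordinates centered at $x$.

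Changing $x$ changes $b$ in a controlled way, and $d(\tau_b,\sigma_x)\to\infty$ as $x\to\infty$. Hence a minimizing $x$ exists, and I recentre at it. If $b\neq0$ a.e., Lemma~\ref{lem:locally minimum condition} then gives $\int_{S^{n-1}}\operatorname{sgn}(b)\,\omega\,d\omega=0$. The case where $\{b=0\}$ has positive measure I would handle by perturbation, since that set only decreases $\mathcal E$. It therefore suffices to bound $\mathcal E(b)$ under conditions (1)--(2) of Problem~\ref{prob:anal form}. Throughout, write $F(t)=\int_0^t\cosh^{n-1}r\,dr$ and $b^*=\operatorname{artanh}(2^{-1/(n-1)})$.

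\emph{Step 1: a pointwise bound.} I aim to prove $|b|\le b^*$ everywhere, i.e.\ $\sin|\beta|\le 2^{-1/(n-1)}$. Take $\omega_0$ with $\beta(\omega_0)=B=\max|\beta|$. Since $\beta$ is $1$-Lipschitz and odd, $\beta>0$ on the open cap of angular radius $B$ about $\omega_0$ and $\beta<0$ on the antipodal cap.

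I would pair the balance condition against $e=\omega_0$, giving $\int\operatorname{sgn}(b)\,\langle\omega,\omega_0\rangle\,d\omega=0$. The two caps contribute a definite positive amount to this integral. The worst case for the remaining region is when the sign pattern is forced to cancel it, which compresses the positive set into a hemisphere-like region around $\omega_0$. In the extremal configuration, the zero set of $\beta$ is a great sphere and $\beta$ is the signed distance to it, truncated at height $B$. Comparing the measure of a cap of radius $\pi/2-B$ with that of a hemisphere, using the $\cos^{n-2}$ density in polar angle, should give $(\sin B)^{n-1}\le 1/2$. That is exactly the threshold $\tanh b^*=2^{-1/(n-1)}$.

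\emph{Step 2: integrating.} A naive integration of Step 1 gives $|S^{n-1}|F(b^*)$, which is not the stated constant. Using $\tfrac{n-1}{2}\Gamma(\tfrac{n-1}{2})=\Gamma(\tfrac{n+1}{2})$, the stated constant equals
\[
\frac{|S^{n-1}|^2}{\int_{S^{n-1}}|\langle\omega,e\rangle|\,d\omega}\cdot\frac{1}{|S^{n-1}|}.
\]
So I expect the argument to weight by $|\langle\omega,e\rangle|$. Concretely, I would try to show $F(|b(\omega)|)\le c\,|\langle\omega,e\rangle|\,F(b^*)$ on average, for a suitable direction $e$ supplied by the balance condition. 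An alternative is to directly compare $\mathcal E(b)$ with the model $b$ from Step 1, whose profile in polar angle is linear in $\beta$. Either way, the Gamma-function identity is routine once the right weighted inequality is found.

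\emph{Main obstacle.} The hard step is Step 1, specifically proving rigorously that the extremal configuration in the balance condition is the symmetric truncated-distance profile. I would try a symmetrization or rearrangement argument on the level sets of $\beta$. The tool I would reach for first is two-point (polarization) rearrangement with respect to hyperplanes through the origin, because it preserves both oddness and the $1$-Lipschitz property. I do not currently see how to verify that it also preserves the vanishing of the vector $\int\operatorname{sgn}(b)\,\omega\,d\omega$, and the argument fails if it does not.
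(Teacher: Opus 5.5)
Your reduction to Problem~\ref{prob:anal form} matches the paper. However, Step~2 rests on a misreading of the constant, and the inequality it aims at is false. The integral in Lemma~\ref{lem:section distance and border function} is the de Sitter measure $\mu_{\mathrm{dS}^n}$ of the symmetric difference. The metric on $\mathcal M(\mathbb{I}^n)$ is instead the normalized wall measure $\mu_{\mathcal W}=\frac{n-1}{2\operatorname{vol}(S^{n-2})}\mu_{\mathrm{dS}^n}$. You can check this on $\tau=\sigma_{x_t}$: there $\mathcal E=\mu_{\mathrm{dS}^n}(A_t)=\frac{2\operatorname{vol}(S^{n-2})}{n-1}\,t$, not $t$. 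So $d(\tau_b,\sigma_{p_0})=\frac{n-1}{2\operatorname{vol}(S^{n-2})}\,\mathcal E(b)$. The quantity $\int_{S^{n-1}}|\langle\omega,e\rangle|\,d\omega=\frac{2\operatorname{vol}(S^{n-2})}{n-1}$ that you isolated is exactly this conversion factor; it does not signal that a weighted estimate is needed. Your ``naive'' bound $\mathcal E(b)\le\operatorname{vol}(S^{n-1})F(b^*)$, multiplied by this factor, is precisely the stated constant, and that is the whole of the paper's final step. By contrast, the inequality $\mathcal E(b)\le\frac{n-1}{2}\frac{\operatorname{vol}(S^{n-1})}{\operatorname{vol}(S^{n-2})}F(b^*)$ that you set out to prove fails. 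For $n=2$, take the paper's zigzag $\beta$: linear with slope $\pm1$, with alternating values $\pm\pi/6$ at consecutive multiples of $\pi/3$. It is odd, $1$-Lipschitz, nonzero a.e.\ and balanced by its $3$-fold symmetry. Yet $\mathcal E=\int_{S^1}|\tan\beta|=12\ln2-6\ln3\approx1.73$, whereas your target is $\frac{\pi}{2\sqrt3}\approx0.91$.

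Step~1 is left unproved, and the route you sketch is misdirected. The configuration you call extremal (zero set a great sphere, $\beta$ the truncated signed distance to it) has $\operatorname{sgn}b=\operatorname{sgn}\langle\omega,\omega_0\rangle$, so it violates the balance condition. Also, the relevant comparison is between first moments $\int t\,d\omega$, not measures of caps. No rearrangement is needed. Set $t=\langle\omega,\omega_0\rangle$. Oddness and the Lipschitz bound $\beta(\omega)\ge B-d_{S^{n-1}}(\omega,\omega_0)$ give $\operatorname{sgn}(b)\,t=|t|$ on both caps $\{|t|>\cos B\}$. On the band $\{|t|<\cos B\}$, just use $\operatorname{sgn}(b)\,t\ge-|t|$. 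Pairing the balance condition with $\omega_0$ then yields $\int_{t>\cos B}t\,d\omega\le\int_{0<t<\cos B}t\,d\omega$. Since $\int_{t>c}t\,d\omega=\frac{\operatorname{vol}(S^{n-2})}{n-1}(1-c^2)^{(n-1)/2}$, this reads $\sin^{n-1}B\le1-\sin^{n-1}B$, i.e.\ $|b|\le b^*$. This estimate even survives without the assumption $b\ne0$ a.e. One-sided minimality in the directions $\pm\omega_0$ gives $\bigl|\int_{\{b\ne0\}}\operatorname{sgn}(b)\,t\,d\omega\bigr|\le\int_{\{b=0\}}|t|\,d\omega$, and $\{b=0\}$ lies in the band, so your perturbation step is unnecessary.
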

The right-hand side gives a choice of such $C_n$.
One may want to read the discussion section first to understand this passage easily and quickly. The concrete construction of the measured wall structure for finite-dimensional hyperbolic space is inspired by ~\cite{NR}.

\section{The de Sitter Construction}
\subsection{Preliminary}
We give an informal instruction to measured wall geometry and medianization.
\subsubsection{Measured Wall Geometry}
A \emph{space with measured walls} consists of a set $X$, a collection $\mathcal W$ of walls, a $\sigma$-algebra $\mathcal B$ of subsets of $\mathcal W$, and a measure $\mu$ on $\mathcal B$, such that for every pair of points $x,y\in X$, the set of walls separating $x$ and $y$, denoted by $\mathcal W(x|y)$, belongs to $\mathcal B$ and has finite measure. A wall is a partition ${h_1,h_2}$ of the space $X$, and $h_1$ and $h_2$ are called half-spaces. If $X$ is a geodesic metric space, we can have up to a consistant constant factor $
d(x,y)=\mu(\mathcal W(x|y)).
$

We call
$
(X,\mathcal W,\mathcal B,\mu)
$
a measured wall structure on $X$.

\subsubsection{Medianization Geometry}

Let $(X,d)$ be a (pseudo-)metric space. Given three points
\(x_1,x_2,x_3\in X\), a point $m\in X$ is called a \emph{median point}
if
$
d(x_i,x_j)
=
d(x_i,m)+d(m,x_j)
$
for every pair $i\neq j$.

If for every triple of points the set of median points has diameter
zero, then $X$ is called a \emph{median (pseudo-)metric space}. A natural
question is whether a given space can be embedded isometrically into a
median space. If we have a measured wall structure, we can construct such a median space.

Suppose that
$
(X,\mathcal W,\mathcal B,\mu)
$ is a measured wall structure. Let $\mathcal H$ denote the collection of half-spaces associated with the walls.

A section is a map choosing a half-space for every wall, or equivalently a set of half-spaces noting which it chooses, or equivalently a function on the set of half-spaces $\sigma:\mathcal{H}\to\{1,-1\}$, or on the set the of walls when the corresponding between half-spaces and $\{1,-1\}$ is defined. All definitions are up to a set where the section abstains, and usually we can expect the set to be of measure-0.

An admissible section is a section that, if it chooses a half-space $h$, and there is another half-space $h'\subset h$, then it chooses $h'$.

For a given point, we can construct a corresponding admissible section, which in this paper is called point section. A point section of a point $x$ is a section $\sigma_x$ induced in this way: if one of a pair of half-space contains $x$, then $\sigma_x$ chooses the one containing $x$.

\begin{prop}
A point section is an admissible section.
\end{prop}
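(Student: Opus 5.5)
We argue directly from the definitions: the only content is that admissibility is inherited through inclusion of half-spaces. Fix $x\in X$ and let $\sigma_x$ be its point section. For each wall $\{h,h^c\}$ the two half-spaces partition $X$, so exactly one of them contains $x$. Hence $\sigma_x$ is a genuine section: it chooses exactly one half-space for every wall and abstains nowhere. In particular, a half-space $h$ is chosen by $\sigma_x$ if and only if $x\in h$.

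Next we verify admissibility. Suppose $\sigma_x$ chooses $h$, and let $h'$ be another half-space (bounding some wall $\{h',h'^c\}$) with $h\subset h'$, so that $h'$ is the larger of the two. Since $\sigma_x$ chose $h$, we have $x\in h$. Then $h\subset h'$ gives $x\in h'$, and therefore $\sigma_x$ chooses $h'$ rather than $h'^c$.

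The one point needing care is the direction of the inclusion in the definition of admissibility as written in the paper. The paper states: ``if it chooses $h$ and $h'\subset h$, then it chooses $h'$.'' Read literally, this direction fails for point sections. For example, in $\mathbb{I}^n$ take $x\in h$ and a smaller half-space $h'\subset h$ that does not contain $x$; then $\sigma_x$ chooses $h'^c$, not $h'$. The intended (standard) condition, as in \cite{WALL}, is upward closure: if $h$ is chosen and $h\subset h'$, then $h'$ is chosen. Equivalently, no two chosen half-spaces are disjoint. I would state this correction explicitly and prove the upward-closed form, which is exactly the argument in the previous paragraph.

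For completeness we can also check the equivalent formulation. Any two half-spaces chosen by $\sigma_x$ both contain $x$, so they intersect; hence no two chosen half-spaces are disjoint. Finally, if one insists on sections defined only up to measure zero, the point section still satisfies the condition everywhere, so the almost-everywhere version holds a fortiori.
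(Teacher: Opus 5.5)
Your argument is correct and is exactly the immediate verification the paper leaves implicit, since the paper states this proposition without proof: $\sigma_x$ chooses $h$ if and only if $x\in h$, so $h\in\sigma_x$ and $h\subset h'$ give $x\in h'$, i.e.\ $h'\in\sigma_x$. You are also right that the inclusion in the preliminary definition of admissibility is written backwards. The paper itself later uses the upward-closed version you prove (whenever $h_u^+\subset h_v^+$ and $\varepsilon(u)=+1$, then $\varepsilon(v)=+1$), both in showing that $\mathcal M(\mathbb{I}^n)$ is median and in deriving the $1$-Lipschitz condition.
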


We can compute the distance between two sections by the measure of the set of walls where they disagree. We consider the set of admissible sections in $\mathcal{B}$ that is at a finite distance from a point section, these sections form the \textbf{medianization} $\mathcal{M}(X)$.

By viewing distance between two sections as an integration on the wall space, we can see that $\mathcal{M}(X)$ is a subspace of $L^1(\mathcal W,\mu)$. It is a median subspace of $L^1(\mathcal W,\mu)$. That is, for any three points in $\mathcal{M}(X)$, we consider the set of median points, and there is always a median point just in $\mathcal{M}(X)$.The median section of three sections is by taking the majority choice on a wall.

\subsubsection{Construction of the medianization of a space with measured walls.}
Let
$
(X,\mathcal W,\mathcal B,\mu)
$
be a space with measured walls. We show how to construct a associated median space
$
\mathcal M(X)
$
and an isometric embedding
$
\iota:X\hookrightarrow \mathcal M(X).
$

We have a pulled back measure $\mu_\mathcal H$ and $\sigma$-algebra $
\mathcal B^{\mathcal H}
$ on the space of half-spaces $\mathcal H$.
Fix a base point
$
x_0\in X.
$
Consider
\[
\mathcal B^{\mathcal H}_{\sigma_{x_0}}
=
\Bigl\{
A\subset\mathcal H:
A\triangle \sigma_{x_0}
\in
\mathcal B^{\mathcal H},
\;
\mu_\mathcal H(A\triangle \sigma_{x_0})<\infty
\Bigr\}
\]

This set has a pseudometric
$
d(A,B)
=
\mu_\mathcal H(A\triangle B).
$
By Example 2.8 of ~\cite{KAmedian}, this is a median pseudometric space, and the map
$
A\mapsto \chi_{A\triangle\sigma_{x_0}}
$
embeds it isometrically into
$
L^1(\mathcal H,\mu).
$
The median operation is given by counting the majority:
$
m(A,B,C)
=
(A\cap B)
\cup
(A\cap C)
\cup
(B\cap C)
$.
Moreover, it is proved that this set is identitied when changing the base point. So we can just write 
$\mathcal B_X^{\mathcal H}$.

Define
$
\mathcal M(X)
=
\{
A\in
\mathcal B_X^{\mathcal H}
:
A
\text{ is an admissible section}
\}.
$
Proposition 3.14 of ~\cite{KAmedian} shows that
$
\mathcal M(X)
$
is a median subspace of
$
\mathcal B_X^{\mathcal H}.
$ The explanation of median subspace will be given later.

Define
$
\iota:X\to\mathcal M(X),
\ 
x\mapsto\sigma_x.
$
To prove that this is an isometric embedding, observe that for
$
x,y\in X,
$
the symmetric difference
$
\sigma_x\triangle\sigma_y
$
contains exactly the half-spaces on which the two sections disagree.
Applying the projection $p$ gives
$
p(\sigma_x\triangle\sigma_y)
=
\mathcal W(x|y),
$
The half-space version is noted as $
\mathcal H(x|y)
$
So
$
\mu(\sigma_x\triangle\sigma_y)
=
\mu(\mathcal W(x|y)).
$

But the wall metric on $X$ is defined by
$
d_X(x,y)
=
\mu(\mathcal W(x|y)).
$
Therefore
$
d_{\mathcal M(X)}
(\sigma_x,\sigma_y)
=
d_X(x,y),
$
and so
$
\iota
$
is an isometric embedding.

Consequently,
$
(X,d_X)
\hookrightarrow
(\mathcal M(X),d_{\mathcal M(X)})
$
isometrically.
The space $\mathcal M(X)$ is called the median space associated with
$(X,\mathcal W,\mathcal B,\mu)$.


Note that ${\mathcal M(X)}$ is not usually a median space. It is a median subspace of $\mathcal B^{\mathcal H}_X$, which means given 3 distinct points, the set of their median points has measure zero and has non-empty intersection with $\mathcal M(X)$, but maybe non-empty intersection with $\mathcal B^{\mathcal H}_X$. However, for real hyperbolic spaces, their medianizations are median spaces.

\subsection{Hyperplanes and $\mathrm{dS}^n$}
In this part, we introduce the $\mathrm{dS}^n$ space to modelize the hyperplanes.

On \(\mathbb R^{n+1}\) let
$
\langle x,y\rangle = -x_0y_0+x_1y_1+\cdots+x_ny_n .
$
The hyperboloid model of hyperbolic \(n\)-space is (we use $\mathbb I$ to distinguish from half-spaces)
\[
\mathbb{I}^n
=
\{x\in \mathbb R^{n+1}:\langle x,x\rangle=-1,\ x_0>0\}.
\]

Let
$
\mathrm{dS}^n
=
\{u\in \mathbb R^{n+1}:\langle u,u\rangle=1\}
$
be de Sitter space. For each \(u\in \mathrm{dS}^n\), define
$
P_u
=
\{x\in \mathbb{I}^n:\langle x,u\rangle=0\}.
$
The orthogonal complement \(u^\perp\) has quadratic form signature \((n-1,1)\), so
$
P_u=u^\perp\cap \mathbb{I}^n
$
is a totally geodesic subspace of \(\mathbb{I}^n\) of dimension \(n-1\).

On the other hand, every totally geodesic hyperplane \(P\subset \mathbb{I}^n\) is of the form
$
P=\mathbb{I}^n\cap V
$
for a \(n\)-dimensional linear subspace \(V\subset \mathbb R^{n+1}\). Its quadratic form orthogonal complement \(V^\perp\) is one-dimensional, so there exists \(u\in\mathbb R^{n+1}\), unique up to a scalar, such that
$
V^\perp=\mathbb R u,$ and $\langle u,u\rangle=1.
$
Thus
$
P=P_u.
$

Therefore $\mathrm{dS}^n$ is the space of open half-spaces. We define that an open half-space $u\in \mathrm{dS}^n$ contains a point $x\in \mathbb{I}^n$ if and only if $\ \langle x,u \rangle \ >\ 0.$ One may choose the other one, but we choose the positive one here. Moreover, the space of totally geodesic ($n-1$)-dimensional hyperbolic hyperplanes in \(\mathbb{I}^n\) is identified with \(\mathrm{dS}^n\) up to sign, that is
$
\Omega
\cong
\mathrm{dS}^n/\{\pm 1\}.
$
Then \(u\) and \(-u\) determine the same hyperplane. In fact, $u$ and $-u$ are the two open half-spaces obtained by dividing $\mathbb{I}^n$ with a same hyperplane. 

We can parametrize de Sitter space by
\[
u=(\sinh r,(\cosh r)\,\omega),
\qquad
r\in \mathbb R,\quad \omega\in S^{n-1}.
\]
Verification:
$
\langle u,u\rangle
=
-(\sinh r)^2+(\cosh r)^2|\omega|^2
=
-(\sinh r)^2+(\cosh r)^2
=
1,
$
where $|\omega|=1$, and $\sinh$ is a bijection from $\mathbb{R}$ to itself.
So
$
\mathrm{dS}^n\cong \mathbb R\times S^{n-1}.
$

\subsection{G-Invariant Measure}
In this part, we show that the measure is G-invariant.

The parametrization
$
u(r,\omega)
=
(\sinh r,\cosh r\,\omega),
\quad
r\in\mathbb R,\quad \omega\in S^{n-1},
$
results in
$
\langle \partial_r u,\partial_r u\rangle=-1.
$
For tangent vectors $v_1,v_2\in T_\omega S^{n-1}$,
\[
\langle d_\omega u(r,v_1),d_\omega u(r,v_2)\rangle
=
\cosh^2 r\, g_{S^{n-1}}(v_1,v_2)
\]
where $g_{S^{n-1}}$ is the standard Riemannian metric tensor on $S^{n-1}$. We get the induced tensor
$
g_{\mathrm{dS}}
=
-dr^2+\cosh^2 r\,g_{S^{n-1}}.
$
This tensor is not a Riemannian metric tensor. However, what we care about essentially is the measure.

Take
$
|d\operatorname{vol}_{\mathrm{dS}}|
=
\sqrt{|\det g_{\mathrm{dS}}|}\,dr\,d\omega.
$ By
$
g_{\mathrm{dS}} =
\begin{pmatrix}
-1 & 0_{1\times(n-1)} \\
0_{(n-1)\times 1} & \cosh^2 r\, g_{S^{n-1}}
\end{pmatrix}
$, 
we get
$
\det g_{\mathrm{dS}} = -\cosh^{2(n-1)}r\,\det g_{S^{n-1}},
$
so
$
|d\operatorname{vol}_{\mathrm{dS}}| = \cosh^{n-1}r\,dr\,d\omega.
$

Therefore
$
d\mu(r,\omega)
=
\cosh^{n-1}r\,dr\,d\omega
$. It is $SO(n,1)$-invariant on de Sitter space, equivalently on the space of totally geodesic hyperplanes in $\mathbb{I}^n$.


\subsection{Equal to Distance}
We show that the measure of distance is equal to the distance of two points.

The measure is G-invariant, and $G$ acts transitively, so the function
$
F(x,y) = \mu(W(x|y))
$
only relies on $d_\mathcal{M}(x,y)$. A wall divides them if and only if the open half-space contain exactly one of them, equivalent for the closed one.

There exists a function
$
f : [0,\infty) \to [0,\infty)
$
such that
$
F(x,y) = f(d_\mathcal M(x,y)).
$

Suppose that $z$ lies on the geodesic between $x$ and $y$. A hyperplane(or half-space) separates $x$ from $y$ if and only if it separates $x$ from $z$ or separates $z$ from $y$, and these two cases cannot hold at the same time, up to a set of measure zero. So $
F(x,y) = F(x,z) + F(z,y)$.
So $
f(a+b) = f(a) + f(b)$.

It is obvious that $f$ is measurable and finite on any bounded interval, so it must be linear. So
$
f(t) = ct
$
for some constant $c=c(n)\ge0$. And $c>0$ is obvious: by taking $|r|$ small and $\omega$ ranging in a small area on $S^{n-1}$, we can separate $o=(1,\dots,0)$ and another point very far away. One can see ~\cite{SWT}.

\subsection{Computation of the Constant}
\label{subsection:Computation of the Constant}
We compute the positive scalar relating the measure of separating
hyperplanes to the hyperbolic distance.

Let
$
o=(1,0,\ldots,0),
\ 
x_t=(\cosh t,\sinh t,0,\ldots,0), t>0.
$
Then
$
-\langle o,x_t\rangle=\cosh t,
$
so
$
d_{\mathbb \mathbb{I}^n}(o,x_t)=t.
$

Let
$
u=(\sinh r,\cosh r\,\omega),
\,
r\in \mathbb R,\, \omega\in S^{n-1}.
$
The corresponding open half-space is
$
H_u=\{x\in \mathbb \mathbb{I}^n:\langle u,x\rangle>0\}.
$

$
o\in h_u
\iff
r<0,\qquad
x_t\in h_u
\iff
\tanh r<\omega_1\tanh t.
$

Therefore \(u\) separates \(o\) and \(x_t\) exactly when
\[
\operatorname{sign}(r)=\operatorname{sign}(\omega_1),
\qquad
|r|<
\operatorname{arctanh}(|\omega_1|\tanh t).
\]

So the separating set is
\[
A_t
=
\left\{
(r,\omega)\in \mathbb R\times S^{n-1}
:
\operatorname{sign}(r)=\operatorname{sign}(\omega_1),
\;
|r|<
\operatorname{arctanh}(|\omega_1|\tanh t)
\right\}.
\]

The invariant measure on \(\mathrm{dS}^n\) is
$
d\mu_{dS}
=
\cosh^{n-1}r\,dr\,d\omega .
$

So
$
\mu_{\mathrm{dS}^n}(A_t)
=
2
\int_{\omega_1>0}
\int_0^{\operatorname{arctanh}(\omega_1\tanh t)}
\cosh^{n-1}r\,dr\,d\omega .
$

Differentiating with respect to \(t\),
$
\frac{d}{dt}\mu_{\mathrm{dS}^n}(A_t)
=
\frac{2\,\operatorname{vol}(S^{n-2})}{n-1}.
$
Since
$
\mu_{\mathrm{dS}^n}(A_0)=0
$, 
we get
$
\mu_{\mathrm{dS}^n}(A_t)
=
\frac{2\,\operatorname{vol}(S^{n-2})}{n-1}\,t.
$

Because \(d_{\mathbb \mathbb{I}^n}(o,x_t)=t\), we get
$
\mu_{\mathrm{dS}^n}(A_t)
=
\frac{2\,\operatorname{vol}(S^{n-2})}{n-1}
\,d_{\mathbb \mathbb{I}^n}(o,x_t)
.
$

\begin{thm}
For points $x,\ y \in \mathbb{I}^n$,
\[
\mu_{\mathrm{dS}^n}(H(x|y)\})
=
\frac{2\,\operatorname{vol}(S^{n-2})}{n-1}
\,d_{\mathbb \mathbb{I}^n}(o,x_t)
\]
\end{thm}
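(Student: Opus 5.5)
The statement is clearly meant to read $\mu_{\mathrm{dS}^n}(H(x|y))=\frac{2\operatorname{vol}(S^{n-2})}{n-1}\,d_{\mathbb{I}^n}(x,y)$; the right-hand side as printed is a typo carried over from the special pair $o,x_t$. I will prove this corrected form in three steps: reduce to the pair $(o,x_t)$ by invariance, then compute that special case directly, including the derivative in $t$ that the text states without proof.

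\textbf{Step 1: reduction by invariance.} Given $x,y\in\mathbb{I}^n$, set $t=d_{\mathbb{I}^n}(x,y)$. Since $SO^+(n,1)$ acts isometrically and two-point transitively on $\mathbb{I}^n$, there is $g$ with $gx=o$ and $gy=x_t$. The group preserves $\langle\cdot,\cdot\rangle$, so for $u\in\mathrm{dS}^n$ we have $\langle x,u\rangle>0$ iff $\langle gx,gu\rangle>0$. Hence $g$ maps $H(x|y)$ bijectively onto $H(o|x_t)=A_t$. The measure $\cosh^{n-1}r\,dr\,d\omega$ is the Lorentzian volume on $\mathrm{dS}^n$, which was shown earlier to be $SO(n,1)$-invariant. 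Therefore $\mu(H(x|y))=\mu(A_t)$, and it suffices to compute $\mu(A_t)$.

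\textbf{Step 2: the special case.} From the description of $A_t$ we have
\[
\mu(A_t)=2\int_{\omega_1>0}\int_0^{\operatorname{artanh}(\omega_1\tanh t)}\cosh^{n-1}r\,dr\,d\omega .
\]
Differentiate under the integral sign, which is justified because the inner integrand is smooth and bounded for fixed $t$. With $R=\operatorname{artanh}(s)$ and $s=\omega_1\tanh t$, we have $\cosh R=(1-s^2)^{-1/2}$. The inner derivative is
\[
\cosh^{n-1}R\cdot\frac{\omega_1\operatorname{sech}^2 t}{1-s^2}
=\omega_1\operatorname{sech}^2t\,(1-\omega_1^2\tanh^2t)^{-(n+1)/2}.
\]
Write $\omega_1=\cos\theta$, so that $d\omega=\operatorname{vol}(S^{n-2})\sin^{n-2}\theta\,d\theta$. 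Then substitute $v=\sin\theta$, which turns the $\omega_1$-integral into
\[
\operatorname{vol}(S^{n-2})\int_0^1 v^{n-2}\operatorname{sech}^2t\,\bigl(1-(1-v^2)\tanh^2t\bigr)^{-(n+1)/2}\,dv .
\]
Note that $1-(1-v^2)\tanh^2t=\operatorname{sech}^2t\,(1+v^2\sinh^2t)$. Substituting $w=v\sinh t$ therefore gives
\[
\operatorname{vol}(S^{n-2})\int_0^{\sinh t}\frac{w^{n-2}\cosh^{n-1}t}{\sinh^{n-1}t}\,(1+w^2)^{-(n+1)/2}\,dw .
\]

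\textbf{Step 3: the closed form, which is the main obstacle.} Showing this expression is independent of $t$ needs the identity
\[
\int_0^X\frac{w^{n-2}}{(1+w^2)^{(n+1)/2}}\,dw=\frac{1}{n-1}\left(\frac{X}{\sqrt{1+X^2}}\right)^{n-1}.
\]
I would check it by differentiating the right-hand side, which gives $\frac{X^{n-2}}{(1+X^2)^{(n-1)/2}}\cdot\frac{1}{1+X^2}$, exactly the integrand. With $X=\sinh t$ we have $X/\sqrt{1+X^2}=\tanh t$. The derivative from Step 2 thus collapses to
\[
\operatorname{vol}(S^{n-2})\cdot\frac{\cosh^{n-1}t}{\sinh^{n-1}t}\cdot\frac{\tanh^{n-1}t}{n-1}=\frac{\operatorname{vol}(S^{n-2})}{n-1}.
\]
Including the factor $2$ from the two hemispheres, $\frac{d}{dt}\mu(A_t)=\frac{2\operatorname{vol}(S^{n-2})}{n-1}$.

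\textbf{Conclusion.} Since $A_0=\emptyset$, integrating in $t$ gives $\mu(A_t)=\frac{2\operatorname{vol}(S^{n-2})}{n-1}\,t$. This matches the additivity argument that $f(t)=ct$, and combined with Step 1 it yields the theorem for all $x,y\in\mathbb{I}^n$.
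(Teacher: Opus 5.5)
Your proposal is correct and follows essentially the same route as the paper. You reduce to the pair $(o,x_t)$ by $SO(n,1)$-invariance of $\cosh^{n-1}r\,dr\,d\omega$ (the paper's ``Equal to Distance'' step), differentiate $\mu_{\mathrm{dS}^n}(A_t)$ in $t$ to get the constant $\frac{2\operatorname{vol}(S^{n-2})}{n-1}$, and integrate from $A_0=\varnothing$. You also correctly read the right-hand side as $d_{\mathbb{I}^n}(x,y)$.

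Your evaluation of the derivative checks out: the substitutions $v=\sin\theta$ and $w=v\sinh t$, followed by the antiderivative $\frac{1}{n-1}\bigl(X/\sqrt{1+X^2}\bigr)^{n-1}$, supply the computation the paper only asserts. One small tightening: differentiating under the integral sign needs a bound on the $t$-derivative that is uniform for $t$ near a given value. This holds because the derivative is bounded by $\cosh^{n-1}t$, since $1-\omega_1^2\tanh^2 t\ge\operatorname{sech}^2 t$.
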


To make the wall measure equal to hyperbolic distance, we set
\begin{defn}
\[
\mu_{\mathcal W}
=
\frac{n-1}{2\,\operatorname{vol}(S^{n-2})}
\,\mu_{\mathrm{dS}^n}
\]
\end{defn}

\section{Wall Geometry}

\subsection{Point-containing Condition for Walls}
Given a hyperbolic point $x\in \mathbb{I}^n$ and a half-space $u\in \mathrm{dS}^n$, we discuss how to see if $x$ is on the wall represented by $u$.
Recall that the hyperplane $P_u$ determines two half-spaces:
\[
h_u^+
=
\{x\in \mathbb{I}^n:
\langle x,u\rangle>0
\}
\text{ and }
h_u^-
=
\{x\in \mathbb{I}^n:
\langle x,u\rangle<0
\}.
\]
Observe that
$
h_u^-
=
h_{-u}^+.
$ So the space of totally geodesic codimension-1 hyperbolic hyperplanes is
$
\Omega
=
\mathrm{dS}^n/\{\pm1\}.
$

\begin{prop}
For a point $x\in \mathbb{I}^n$, the set of open half-spaces bounding a hyperplane passing through $x$, that is the set of open half-spaces obtained by a hyperplane containing $x$, is
\[
\Omega_x
=
\{[u]\in\mathrm{dS}^n:
\langle x,u\rangle=0
\}
\]
With
$
\Omega_x
=
x^\perp\cap\mathrm{dS}^n\cong S^{n-1}.
$

\end{prop}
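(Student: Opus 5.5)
We will show that the set of $u\in\mathrm{dS}^n$ with $\langle x,u\rangle=0$ is exactly the set of open half-spaces whose boundary hyperplane passes through $x$, and that this set is a round $(n-1)$-sphere sitting in $x^\perp$. The argument has two steps: a set identification, then the sphere identification.

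\textbf{Step 1 (set identification).} By the construction in the subsection on hyperplanes and $\mathrm{dS}^n$, every open half-space is $h_u^+$ for a unique $u\in\mathrm{dS}^n$. Its bounding hyperplane is $P_u=u^\perp\cap\mathbb{I}^n$, and $h_u^-=h_{-u}^+$ bounds the same $P_u$. Hence the bounding hyperplane of the half-space $u$ contains $x$ if and only if $x\in P_u$, that is, $\langle x,u\rangle=0$. The condition is invariant under $u\mapsto -u$, so both half-spaces bounding a given hyperplane through $x$ lie in $\Omega_x$. This gives $\Omega_x=x^\perp\cap \mathrm{dS}^n$.

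\textbf{Step 2 (identification with $S^{n-1}$).} Since $\langle x,x\rangle=-1$, the vector $x$ is timelike. Its orthogonal complement $x^\perp$ is therefore $n$-dimensional, and the form restricted to it is positive definite (signature $(n,0)$), because the total signature is $(n,1)$ and $x$ accounts for the single negative direction. Thus $x^\perp\cap\mathrm{dS}^n=\{u\in x^\perp:\langle u,u\rangle=1\}$ is the unit sphere of a Euclidean $n$-space, which is $S^{n-1}$.

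To make this concrete, use the transitivity of $SO(n,1)$ on $\mathbb{I}^n$, which the paper already relies on, to pick $g$ with $gx=o=(1,0,\dots,0)$. Then $g$ carries $x^\perp\cap\mathrm{dS}^n$ isometrically onto $o^\perp\cap\mathrm{dS}^n=\{(0,\omega):\omega\in S^{n-1}\}$. In the parametrization $u=(\sinh r,\cosh r\,\omega)$ this is exactly the slice $r=0$. This matches the criterion $o\in h_u\iff r<0$ from the computation of the constant, since the boundary case is $r=0$.

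\textbf{Main obstacle.} The only real care needed is the signature claim in Step 2. A possible ambiguity is that the bracket notation $[u]$ in the statement could be read as a class in $\mathrm{dS}^n/\{\pm1\}$. In that reading the set would be $S^{n-1}/\{\pm1\}$ rather than $S^{n-1}$. We therefore interpret $\Omega_x$ as a set of half-spaces, i.e.\ of points $u\in\mathrm{dS}^n$, as the prose states. Under that reading the set is exactly $S^{n-1}$.
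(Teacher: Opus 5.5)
Your proposal is correct and matches the paper's argument: the paper's proof is exactly your Step 2 ($x$ timelike, so $x^\perp$ is $n$-dimensional and positive definite, hence $x^\perp\cap\mathrm{dS}^n$ is its unit sphere). The paper treats your Step 1 as immediate from the definition of $P_u$, and it notes the $SO(n,1)$ reduction to $o$ right after the proof. Your reading of $[u]$ as a point of $\mathrm{dS}^n$ rather than a class in $\mathrm{dS}^n/\{\pm1\}$ is the one consistent with $\Omega_x=x^\perp\cap\mathrm{dS}^n\cong S^{n-1}$.
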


\begin{proof}
Since
$
\langle x,x\rangle=-1,
$
the orthogonal complement $x^\perp$ has positive definite quadratic form and dimension $n$. Therefore
$
x^\perp\cap\mathrm{dS}^n
=
\{u\in x^\perp:
\langle u,u\rangle=1\}
$
is the unit sphere in $x^\perp$.
\end{proof}
In fact, by a $SO(n,1)$ action, one only needs to check a base point $o=(1,0,...,0)$. We identify $h_u^+$ with u. One can choose $h_u^-$, however we use $h_u^+$ here.

\subsection{Intersection Condition for Walls}
Given two open half-spaces $u,v\in \mathrm{dS}^n$, we discuss how to see if the walls represented by $u$ and $v$ intersect.
They determine hyperbolic walls
$
P_u
=
\mathbb{I}^n\cap u^\perp,
\ 
P_v
=
\mathbb{I}^n\cap v^\perp.
$

Then
$
P_u\cap P_v
=
\mathbb{I}^n\cap u^\perp\cap v^\perp
=
\mathbb{I}^n\cap \operatorname{span}(u,v)^\perp.
$
Thus the question reduces to determining when
$
\operatorname{span}(u,v)^\perp
$
contains a point of \(\mathbb{I}^n\).

Observe that \(x\in \mathbb{I}^n\) if and only if \(x\) is a future-pointing timelike\footnote{Physicists call the first coordinate time.} vector normalized by
$
\langle x,x\rangle=-1.
$
So
$
P_u\cap P_v\neq\varnothing
$
if and only if \(\operatorname{span}(u,v)^\perp\) contains a nonzero timelike vector \(x\), namely
$
\langle x,x\rangle<0.
$

Let
$
a=\langle u,v\rangle.
$
Since
$
\langle u,u\rangle
=
\langle v,v\rangle
=
1,
$
the Gram matrix\footnote{See discussion part.} of \(\operatorname{span}(u,v)\) is
$
G(u,v)
=
\begin{pmatrix}
1 & a\\
a & 1
\end{pmatrix}.
$
Its determinant is
$
\det G
=
1-a^2.
$

The orthogonal complement contains a timelike vector precisely when
\(\operatorname{span}(u,v)\) is spacelike\footnote{Physicists call the rest coordinates space.}, namely when \(G(u,v)\) is positive definite. Since the diagonal entries are positive, this is equivalent to
$
1-a^2>0.
$
Using the parametrization
$
u
=
(\sinh r,\cosh r\,\omega),\ 
v
=
(\sinh s,\cosh s\,\eta),\ 
\omega,\ \eta\in S^{n-1},
$
we compute
$
\langle u,v\rangle
=
-\sinh r\,\sinh s
+
\cosh r\,\cosh s\,(\omega\cdot\eta).
$

So the intersection condition becomes
\[
{
\left|
-\sinh r\,\sinh s
+
\cosh r\,\cosh s\,(\omega\cdot\eta)
\right|
<
1.
}
\]

Moreover, we get other cases.
$
|\langle u,v\rangle|
=
1
$
corresponds to walls that meet only at infinity, except for $u=v$ and $u=-v$.
$
|\langle u,v\rangle|
>
1
$
corresponds to unparallel walls.

Since \(u\) and \(-u\) share a hyperplane, the condition is invariant under changing signs, which explains the appearance of the absolute value.

Therefore
$
P_u\cap P_v\neq\varnothing
\iff
|\langle u,v\rangle|<1\,\text{ or }\,u=\pm v.
$

A point and its opposite have measure zero. So for fixed $u\in\mathrm{dS}^n $, we have 
$\begin{aligned}
&\mu_{\mathrm{dS}^n}(\{v\in\mathrm{dS}^n:P_u\cap P_v\neq\varnothing\})
\\=
&\mu_{\mathrm{dS}^n}(\{v\in\mathrm{dS}^n:
\left|
-\sinh r\,\sinh s
+
\cosh r\,\cosh s\,(\omega\cdot\eta)
\right|
<
1\})
\end{aligned}$


\subsection{Inclusion Condition for Half-spaces}
We discuss when an open half-space contains another one.

Recall that for every point
$
u\in\mathrm{dS}^n,
$
define the half-space corresponding $u$ to be
$
H_u
=h_u^+=
\{x\in \mathbb{I}^n:
\langle x,u\rangle>0
\}.
$

We have $
H_u\subseteq H_v
\iff
u-v\in\overline{\mathcal C}^{+},
$
where\footnote{Physics called it future light cone.}
\[
\overline{\mathcal C}^{+}
=
\{z\in\mathbb R^{n+1}:
\langle z,z\rangle\le0,
\ z_0\ge0
\}.
\]
\begin{prop}
\label{prop:Half-Space Inclusion}
$
H_u\subseteq H_v
\iff
u_0-v_0\ge0
\quad\text{and}\quad
\langle u,v\rangle\ge1.
$
\end{prop}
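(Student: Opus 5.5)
We will first rewrite the right-hand condition in terms of the light cone, and then show that it is equivalent to the inclusion, as the sentence before the proposition suggests. Since $\langle u,u\rangle=\langle v,v\rangle=1$, we have $\langle u-v,u-v\rangle=2-2\langle u,v\rangle$. Hence $\langle u,v\rangle\ge1$ is equivalent to $u-v$ being causal, and $u_0-v_0\ge0$ is equivalent to its time coordinate being nonnegative. So the statement is exactly $H_u\subseteq H_v\iff u-v\in\overline{\mathcal C}^{+}$, and we prove this form.

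For ($\Leftarrow$), we use the standard fact that a future unit timelike vector $x\in\mathbb I^n$ and a future causal vector $z$ satisfy $\langle x,z\rangle\le 0$, with equality only for $z=0$. This follows from the reverse Cauchy--Schwarz inequality, or by moving $x$ to $o=(1,0,\dots,0)$ with $SO(n,1)$, where $\langle o,z\rangle=-z_0\le0$. With $z=u-v$, for every $x\in\mathbb I^n$ we get $\langle x,u\rangle=\langle x,v\rangle+\langle x,z\rangle\le\langle x,v\rangle$. Therefore $\langle x,u\rangle>0$ implies $\langle x,v\rangle>0$, that is, $H_u\subseteq H_v$.

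For ($\Rightarrow$), assume $H_u\subseteq H_v$ with $u\ne v$, and set $a=\langle u,v\rangle$. We split into the three cases of the intersection subsection.

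\emph{Case $|a|<1$.} The walls meet, so pick $x_0\in P_u\cap P_v$. On the positive definite space $x_0^\perp$, the vectors $u,v$ are distinct unit vectors, and $v\neq u$. Hence there is $w\in x_0^\perp$, $\langle w,w\rangle=1$, with $\langle w,u\rangle>0>\langle w,v\rangle$; this uses only $v\ne u$ and is elementary Euclidean geometry. Then $x=\cosh\varepsilon\,x_0+\sinh\varepsilon\,w\in\mathbb I^n$ lies in $H_u\setminus H_v$ for small $\varepsilon>0$, a contradiction.

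\emph{Case $a\le-1$.} Then $u+v=u-(-v)$ is causal, so by ($\Leftarrow$) either $H_u\subseteq H_{-v}$ or $H_{-v}\subseteq H_u$, according to whether $u+v$ is future or past. In the first subcase $H_u\subseteq H_v\cap H_{-v}=\varnothing$, which is impossible since open half-spaces are nonempty. In the second, $H_v\supseteq H_u\supseteq H_{-v}$, so $H_v$ contains both open sides of $P_v$. Then $\langle x,v\rangle>0$ for all $x\in H_{-v}$, where $\langle x,v\rangle<0$, which is absurd.

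\emph{Case $a\ge1$.} Here $u-v$ is causal and nonzero, so it is either future or past. If it is past, then $v-u$ is future, and ($\Leftarrow$) gives $H_v\subseteq H_u$, hence $H_u=H_v$. Equal open half-spaces have the same boundary hyperplane $P_u=P_v$, so $v=\pm u$. Since $H_{-u}\cap H_u=\varnothing$, we get $v=u$, contradicting $u\ne v$. Hence $u-v\in\overline{\mathcal C}^{+}$.

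The main care is needed in the case $|a|<1$, where one must check that the perturbation direction $w$ exists and that $x$ stays on the hyperboloid. Both are routine once one works inside $x_0^\perp$, which is Euclidean.
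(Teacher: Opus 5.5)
Your proof is correct. The ($\Leftarrow$) direction is essentially the paper's argument: the paper also shows $\langle x,u-v\rangle\le 0$ using $x_0>|x'|$ and $a\ge|A|$ for $u-v=(a,A)$. For ($\Rightarrow$), however, you take a genuinely different route.

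\emph{The paper's route.} It disposes of $|\langle u,v\rangle|<1$ with a bare appeal to the intersection condition. It then normalizes $u=(0,1,0,\dots,0)$ by an isometry and tests the inclusion against explicit points $x=(\sqrt{1+|\omega|^2+s^2},s,\omega)$. From the limits $s\to0^+$ and $s\to\infty$, and a minimization in an auxiliary parameter, it extracts $-v_0\ge|v'|$ and then $v_1\ge1$. The time condition $u_0\ge v_0$ comes from undoing a boost and estimating $\bar v_0=v_0\cosh a+v_1\sinh a$ coordinate by coordinate.

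\emph{Your route.} You bootstrap from ($\Leftarrow$) together with the fact that every causal vector is future or past. This turns the cases $\langle u,v\rangle\le-1$, and $\langle u,v\rangle\ge1$ with past $u-v$, into inclusions $H_u\subseteq H_{-v}$, $H_{-v}\subseteq H_u$ or $H_v\subseteq H_u$. These contradict $H_v\cap H_{-v}=\varnothing$, or force $H_u=H_v$ and hence $v=u$. The only new geometric input is the perturbation $x=\cosh\varepsilon\,x_0+\sinh\varepsilon\,w$ off a point $x_0\in P_u\cap P_v$, with for instance $w$ proportional to $u-v$ in the Euclidean space $x_0^\perp$. This actually supplies the justification the paper leaves implicit for the intersecting case.

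\emph{Comparison.} Your argument is shorter and coordinate-free, and it shows why $u-v\in\overline{\mathcal C}^{+}$ is the natural formulation. The paper's computation is more laborious. In exchange, it gives an explicit description, in the frame $u=(0,1,0,\dots,0)$, of which $v$ can satisfy $H_u\subseteq H_v$.
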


\begin{proof}
When $\langle u,v\rangle\ \in\ (-1,1)$, by intersection condition for walls, the inclusion of half-spaces is impossible. So we only need to consider $|\langle u,v\rangle|\ge1$.

\smallskip
Assuming that
$
\langle u,v\rangle\ge 1
\text{ and }
u_0-v_0\ge 0,
$
we prove that
$
H_u\subseteq H_v.
$

Since
$
\langle u,u\rangle=\langle v,v\rangle=1,
$
we have
$
\langle u-v,u-v\rangle\le 0 \text{ and } 
(u-v)_0\ge 0.
$ Now let \(x\in \mathbb{I}^n\). Write
$
x=(x_0,x') \text{ and } u-v=(a,A),
$
where \(x'\in\mathbb R^n\) and \(A\in\mathbb R^n\).

By
$
x_0^2=1+|x'|^2,
$
we get
$
x_0>|x'|.
$
By
$
-a^2+|A|^2\le 0,
$
we get
$
a^2\ge |A|^2.
$
Because \(a\ge 0\), we get
$
a\ge |A|.
$
Therefore
$
\langle x,u-v\rangle
=
-x_0a+x'\cdot A
\le
-x_0a+|x'||A|
\le
-x_0a+|x'|a
=
-(x_0-|x'|)a
\le 0.
$ Now suppose \(x\in H_u\). Then
$
\langle x,u\rangle>0.
$
Thus
$
\langle x,v\rangle
=
\langle x,u\rangle-\langle x,u-v\rangle
>
0+0
=
0.
$
So \(x\in H_v\), that is $
H_u\subseteq H_v.
$

\smallskip
For the other direction, when $n=1$ it is obvious. Assume that $n\ge2$.

\smallskip
First for $\langle u,v \rangle \ge 1$, by an $SO(n,1)$ transformation we only need to consider $u=(0,1,0,\cdots,0)$. The condition becomes when $x_1>0$ we have $\langle x,v\rangle >0$. Note $v'=(v_2,\cdots,v_n\in \mathbb R^{n-1})$.

Take $x=(\sqrt{1+|\omega|^2+s^2},s,\omega),\ \omega\in\mathbb R^{n-1},\ s>0$. We have $\langle x,v \rangle = -v_0\sqrt{1+|\omega|^2+s^2} +v_1s+\omega\cdot v'>0$. Taking $s\to0^+$, we have $\langle x,v \rangle \to -v_0\sqrt{1+|\omega|^2}+\omega\cdot v'\ge0$. So $-v_0\ge|v'|$. So $|v_1|\ge1$. We show that $v_1\le-1$ is impossible. We have shown that $-v_0\ge|v'|$. If $|v'|=0$, let $|\omega|=0$ then we have $\langle x,v \rangle =\frac{-s^2+v_0^2}{-v_0\sqrt{1+s^2}-v_1s}>0$, which is impossible for large $s$.

If $|v'|\ne0$, let $\omega=-\frac{r}{|v'|}v'$ and $f_s(r)= -v_0\sqrt{1+|\omega|^2+s^2}+v_1s -r|v'|>0$ for $r\in\mathbb R$. If $-v_0 = |v'|$, let $r\to+\infty$, then we get $f_s(r)\to v_1s<0$, impossible. If $-v_0 > |v'|$, $f_s^{'}(r)=\frac{-v_0r}{\sqrt{1+r^2+s^2}}-|v'|$. $f_s$ reaches its minimum on $\mathbb R$ when $r=\sqrt\frac{1+s^2}{v_0^2-|v'|^2}\ |v'|$, and we have this minimum $\sqrt{(v_1^2-1)(1+s^2)}\ +\ v_1s\ \ge0$, which is equivalent to $-s^2-1+v_1^2\ge0$, and it is impossible for large $s$.

So $v_1 \le-1 $ is impossible. We have $\langle u,v \rangle=v_1\ge1$, which is invariant under $SO(1)$ transformation. 

\smallskip
Now for $ u_0 \ge v_0 $, considering an $SO(n)$ action rotating the last $n$ coordinates, we can assume that $u=(\sinh a, \cosh a,0,\cdots,0),\ a\in\mathbb R$, and $v=(\bar{v_0},\bar{v_1},\bar{v'}),\ \bar{v'}\in\mathbb R^{n-1}.$ Under a $SO(n,1)$ action of $\begin{pmatrix}
A_a & 0 \\
0 & I_{n-1}
\end{pmatrix}$ $u$ becomes $u=(0,1,0,\cdots,0)$ and $v$ becomes $({v_0},{v_1},{v'})$ as above, where $A_a = \begin{pmatrix}
\cosh a & -\sinh a \\
-\sinh a & \cosh a
\end{pmatrix}$. We have $\bar{v_0}=v_0\cosh a+{v_1}\sinh a$. We have shown that $v_1\ge1$ and $-v_0\ge|v'|$. Plus that $v_1\le\sqrt{1+v_0^2}\le1-v_0$. If $a\ge0$, we have 
\[
\begin{aligned}\bar{v_0}&=v_0\cosh a+v_1 \sinh a\\&\le v_0\cosh a +(1-v_0)\sinh a \\&= \sinh a + v_0(\cosh a - \sinh a)\le \sinh a\end{aligned}
\]
If $a<0$, we have $v_1\sinh a\le \sinh a$ and $v_0\cosh a\le 0 $, so
\[
\bar{v_0}=v_0\cosh a+v_1\sinh a\le\sinh a
\]
\end{proof}

\subsection{Modified Inclusion Condition for Half-spaces} $\mathrm{dS}^n$ has another kind of parametrization, which we will use later.

We have parametrized de Sitter space by
$
u(r,\omega)
=
(\sinh r,\cosh r\,\omega),
\,
r\in\mathbb R,\,\omega\in S^{n-1}.
$
Now introduce the coordinate
$
\rho
=
\arcsin(\tanh r)\in (-\frac\pi2,\frac\pi2).
$
Then
$
\tanh r=\sin\rho,
\,
\operatorname{sech}r=\cos\rho,
$
and
$
\sinh r=\tan\rho,
\,
\cosh r=\sec\rho.
$
Therefore
$
u(\rho,\omega)
=
(\tan\rho,\sec\rho\,\omega).
$

Take
$
u=u(\rho,\omega),
\,
v=u(\rho',\eta),
$
and let
$
\alpha=d_{S^{n-1}}(\omega,\eta),
\qquad
\omega\cdot\eta=\cos\alpha.
$
The condition
$
v_0-u_0\ge 0
$
becomes
$
\tan\rho'-\tan\rho\ge 0,
$
which is equivalent to
$
\rho'\ge \rho.
$

On the other hand,
\[
\begin{aligned}
\langle u,v\rangle
&=
-\tan\rho\,\tan\rho'
+
\sec\rho\,\sec\rho'\,(\omega\cdot\eta) \\
&=
\frac{\cos\alpha-\sin\rho\sin\rho'}
{\cos\rho\cos\rho'}.
\end{aligned}
\]

Thus
$
\langle u,v\rangle\ge 1
$
is equivalent to
$
\cos\alpha \ge  \cos(\rho'-\rho)
$
Since
$
0\le \alpha\le \pi,
$
and
$
\pi\ge\rho'-\rho\ge 0,
$
so this is equivalent to
$
\alpha\le \rho'-\rho.
$

\begin{prop}
$
H_{u(\rho,\omega)}
\subseteq
H_{u(\rho',\eta)}
\iff
\rho'-\rho
\ge
d_{S^{n-1}}(\omega,\eta)
$
\end{prop}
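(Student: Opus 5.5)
The plan is to translate directly, using the preceding Proposition \ref{prop:Half-Space Inclusion}. Write $u=u(\rho,\omega)=(\tan\rho,\sec\rho\,\omega)$ and $v=u(\rho',\eta)=(\tan\rho',\sec\rho'\,\eta)$, with $\rho,\rho'\in(-\frac\pi2,\frac\pi2)$ and $\alpha=d_{S^{n-1}}(\omega,\eta)\in[0,\pi]$. By Proposition \ref{prop:Half-Space Inclusion}, the inclusion of $H_u$ in $H_v$ is characterized by two conditions. The first is the time-coordinate inequality $v_0\ge u_0$ (the orientation in which the computation above was carried out). The second is $\langle u,v\rangle\ge 1$. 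The task is therefore to show that, in the $(\rho,\omega)$ coordinates, the conjunction of these two conditions is exactly $\rho'-\rho\ge\alpha$.

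\textbf{Step 1.} Since $\tan$ is strictly increasing on $(-\frac\pi2,\frac\pi2)$, the time condition $\tan\rho'\ge\tan\rho$ is equivalent to $\rho'\ge\rho$.

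\textbf{Step 2.} Expanding the inner product and using $\omega\cdot\eta=\cos\alpha$ gives
\[\langle u,v\rangle=\frac{\cos\alpha-\sin\rho\sin\rho'}{\cos\rho\cos\rho'}.\]
Multiplying by the positive quantity $\cos\rho\cos\rho'$, the condition $\langle u,v\rangle\ge1$ becomes
\[\cos\alpha\ge \cos\rho\cos\rho'+\sin\rho\sin\rho'=\cos(\rho'-\rho).\]

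\textbf{Step 3.} Combine the two conditions. Given $\rho'\ge\rho$, we have $\rho'-\rho\in[0,\pi)$, and also $\alpha\in[0,\pi]$. Cosine is strictly decreasing on $[0,\pi]$, so $\cos\alpha\ge\cos(\rho'-\rho)$ is equivalent to $\alpha\le\rho'-\rho$. Hence the conjunction of the two conditions implies $\rho'-\rho\ge\alpha$.

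\textbf{Step 4.} For the converse, suppose $\rho'-\rho\ge\alpha$. Since $\alpha\ge 0$, this gives $\rho'\ge\rho$, which is the time condition from Step 1. It also places both $\alpha$ and $\rho'-\rho$ in $[0,\pi]$, so monotonicity of cosine yields $\cos\alpha\ge\cos(\rho'-\rho)$. This is the inner-product condition from Step 2, and Proposition \ref{prop:Half-Space Inclusion} then gives the inclusion.

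The only delicate point is bookkeeping, not estimation. The time-coordinate inequality in Proposition \ref{prop:Half-Space Inclusion} must be matched with the correct orientation, which is the one dictated by the light-cone characterization $u-v\in\overline{\mathcal C}^{+}$. Ensuring that both angles lie in the interval where cosine is monotone is exactly what the first condition supplies. I would check the orientation on a sanity example: take $\omega=\eta$, so that $\alpha=0$, where inclusion should reduce to nested parallel hyperplanes ordered by $\rho$.
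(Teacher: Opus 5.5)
Your Steps 2--4 are exactly the paper's computation. However, Step 1 invokes Proposition~\ref{prop:Half-Space Inclusion} with the time condition reversed, and the paper's own proof makes the same slip. That proposition characterizes $H_u\subseteq H_v$ by $u_0-v_0\ge 0$ together with $\langle u,v\rangle\ge 1$. The light-cone form says the same thing: $u-v\in\overline{\mathcal C}^{+}$ requires $(u-v)_0\ge 0$, i.e.\ $u_0\ge v_0$. So your parenthetical justification of $v_0\ge u_0$ is backwards. The proposition's proof is sound: $u_0\ge v_0$ and $\langle u,v\rangle\ge 1$ give $\langle x,u-v\rangle\le 0$ on $\mathbb{I}^n$, hence $\langle x,v\rangle\ge\langle x,u\rangle$. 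With the correct sign, Step 1 gives $\rho\ge\rho'$. Step 2 is unchanged, because $\langle u,v\rangle$ is symmetric in $(\rho,\omega)$ and $(\rho',\eta)$, and yields $\cos\alpha\ge\cos(\rho-\rho')$. Step 3 then gives $\rho-\rho'\ge\alpha$.

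So the equivalence you set out to prove is not just unproved but false under the convention $H_u=\{x:\langle x,u\rangle>0\}$. The sanity check you propose, but do not carry out, detects this. Along $\gamma_\omega(t)=(\cosh t,\sinh t\,\omega)$ one has $\langle\gamma_\omega(t),u(r,\omega)\rangle=\sinh(t-r)$. Hence $H_{u(r,\omega)}$ is the side beyond $\gamma_\omega(r)$, and it shrinks as $r$, equivalently $\rho$, increases. For instance $H_{u(\rho,\omega)}\subsetneq H_{u(0,\omega)}$ for $\rho>0$, although $0-\rho<0=d_{S^{n-1}}(\omega,\omega)$. In Klein coordinates $y=x'/x_0$, the half-space $H_{u(\rho,\omega)}$ is $\{y\cdot\omega>\sin\rho\}$. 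Its ideal boundary is the cap of radius $\frac{\pi}{2}-\rho$ about $\omega$, and nesting of such caps is exactly $d_{S^{n-1}}(\omega,\eta)\le\rho-\rho'$.

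The correct statement is $H_{u(\rho,\omega)}\subseteq H_{u(\rho',\eta)}\iff\rho-\rho'\ge d_{S^{n-1}}(\omega,\eta)$. Your four steps prove it verbatim once $\rho'-\rho$ is replaced by $\rho-\rho'$. Your Step 4 observation is worth keeping: the inequality by itself supplies both the time condition and the range $[0,\pi]$ needed for monotonicity of cosine, which tightens the paper's one-line argument. The error is harmless downstream only because the paper later writes $\sigma_b=\{r>b(\omega)\}$, while its point sections are $\{r<b(\omega)\}$. The two sign flips cancel, and the symmetric conclusion $|\beta(\omega)-\beta(\eta)|\le d_{S^{n-1}}(\omega,\eta)$ comes out the same either way.
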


\section{Medianization Geometry}

\subsection{Sections}
Recall the equivalant definitions of section. Especially, we can view a section as a set of open half-spaces $\sigma\subset \mathrm{dS}^n$, such that $\sigma\{h,h^c\}=h$ for an open half-space $h$ if and only if $h\in \sigma$. We can use a number to note this. Also, a section can be represented by a sign function
$
\varepsilon:
\mathrm{dS}^n
\longrightarrow
\{\pm1\}
$, such that $h\in\sigma \iff \varepsilon(h)=1$.

A trival example is taking all the open half-spaces. Another example is point section: for a point $x\in \mathbb{I}^n$, define
$
\sigma_x
=
\{u\in\mathrm{dS}^n:
\langle x,u\rangle>0
\}.
$
In this case, $
\varepsilon_x(u)
=
\operatorname{sgn}
(\langle x,u\rangle),\ a.e.
$
The set
$
\{u:
\langle x,u\rangle=0\}
$
has measure zero, and so does not affect the measured wall structure. In fact, if we consider the equivalent classes of sections up to choice on a zero-measure set of walls, every point section lies in a distinct class.

Recall from section \ref{subsection:Computation of the Constant} that, a point section is defined as follow and will be used in later section. For
$
x_t=(\cosh t,\sinh t,0,\ldots,0), t>0.
$
\[
\sigma_{x_t}={\left\{(\sinh r,\cosh r \omega)\in \mathrm{dS}^n:\omega_1>\frac{\tanh r}{\tanh t}\right\}}
\]

\subsection{Medianization of Real Hyperbolic Space Is a Median Space}

\begin{thm}
\(\mathcal M(\mathbb{I}^n)\) is a median pseudo-metric space.
\end{thm}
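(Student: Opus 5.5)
The plan is to reduce the statement to a pointwise, wall-by-wall statement about sets, using the embedding $A\mapsto \chi_{A\triangle\sigma_{x_0}}$ of $\mathcal B^{\mathcal H}_X$ into $L^1(\mathcal H,\mu)$ recalled from Example 2.8 of~\cite{KAmedian}. Two things must be shown for arbitrary $A,B,C\in\mathcal M(\mathbb{I}^n)$: first, that a median point exists inside $\mathcal M(\mathbb{I}^n)$; second, that any two median points are at distance zero. The natural candidate for the median is the majority section
$$m(A,B,C)=(A\cap B)\cup(A\cap C)\cup(B\cap C).$$

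\emph{Existence.} I first check that $m=m(A,B,C)$ lies in $\mathcal B^{\mathcal H}_X$. Measurability is clear because $m\triangle\sigma_{x_0}\subseteq (A\triangle\sigma_{x_0})\cup(B\triangle\sigma_{x_0})\cup(C\triangle\sigma_{x_0})$ is built from measurable sets by Boolean operations. The same inclusion shows that $m\triangle\sigma_{x_0}$ has finite measure.

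Next, $m$ is a section. On each wall $\{h,h^c\}$, each of $A,B,C$ picks exactly one side, so exactly one side receives at least two votes.

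Then, $m$ is admissible. Suppose $m$ chooses $h$ and $h'\subset h$ is a half-space. At least two of $A,B,C$ choose $h$. By admissibility of each of those two, both also choose $h'$, hence $m$ chooses $h'$.

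Finally, $m$ is a median of $A,B,C$. Pointwise on $\mathcal H$ one has $|\chi_A-\chi_B|=|\chi_A-\chi_m|+|\chi_m-\chi_B|$, by checking the eight possible patterns of membership. Integrating against $\mu_{\mathcal H}$ gives $d(A,B)=d(A,m)+d(m,B)$, and the same holds for the other two pairs. This reproduces Proposition 3.14 of~\cite{KAmedian} in our concrete setting.

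\emph{Uniqueness up to measure zero.} Let $D\in\mathcal M(\mathbb{I}^n)$ be any median point. Pointwise, the triangle inequality $|\chi_A-\chi_B|\le|\chi_A-\chi_D|+|\chi_D-\chi_B|$ always holds. Equality of the integrals, together with finiteness of $d(A,B)$, therefore forces equality $\mu_{\mathcal H}$-almost everywhere. Pointwise equality fails exactly on the set of half-spaces $h$ with $h\in A\cap B\setminus D$ or $h\in D\setminus(A\cup B)$. Hence
$$A\cap B\subseteq D\subseteq A\cup B \quad\text{up to a null set.}$$
Doing this for all three pairs gives $m\subseteq D$ a.e. and $D\subseteq (A\cup B)\cap(A\cup C)\cap(B\cup C)=m$ a.e. So $d(D,m)=\mu_{\mathcal H}(D\triangle m)=0$, and the median set has diameter zero.

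The main obstacle I expect is not the Boolean algebra but the measure-zero bookkeeping. Sections in this paper are only defined up to a null set of walls, and admissibility is a pointwise condition. I would therefore fix representatives of $A,B,C$ that are genuinely admissible everywhere before forming $m$. I would also check that replacing $D$ by an a.e.-equal representative does not leave $\mathcal M(\mathbb{I}^n)$; this is why the conclusion is stated for a pseudo-metric.

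A second point to verify is that the pulled-back measure $\mu_{\mathcal H}$ on $\mathrm{dS}^n$ (namely $\mu_{\mathcal W}$ lifted along $u\mapsto[u]$) is $\sigma$-finite. This ensures that the a.e. arguments above are legitimate. It holds because $\cosh^{n-1}r\,dr\,d\omega$ is finite on the sets $\{|r|\le R\}$.
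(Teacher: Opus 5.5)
Your proposal is correct and follows essentially the same route as the paper: the majority section is the median candidate, the pointwise identity for each pair is integrated to get the median property, and the a.e. vanishing of the nonnegative defect $\mathbf 1_{A\neq D}+\mathbf 1_{D\neq B}-\mathbf 1_{A\neq B}$ gives diameter zero. Your sandwich $A\cap B\subseteq D\subseteq A\cup B$ (a.e.) is just a tidier packaging of the paper's final inclusion argument, and you actually justify the admissibility of the majority section, which the paper only calls obvious.
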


\begin{proof}
We use the concrete de Sitter parametrization of the walls of \(\mathbb{I}^n\).

Recall that
$
h_u^+
=
\{x\in \mathbb{I}^n:\langle x,u\rangle> 0\},
\ 
h_u^-
=
\{x\in \mathbb{I}^n:\langle x,u\rangle< 0\}.
$

A section may therefore be represented by a sign function
$
\varepsilon:\mathrm{dS}^n\to\{\pm1\}
$.
The meaning is that
\[
\varepsilon(u)=+1
\quad\Longleftrightarrow\quad
\text{the section chooses }u.
\]

A section \(\varepsilon\) is admissible if it is upward closed with respect to inclusion of half-spaces: whenever $h_u^+\subset h_v^+
$ and $\varepsilon(u)=+1,$ then
$
\varepsilon(v)=+1.
$

Define \(\mathcal M(\mathbb{I}^n)\) to be the set of measurable admissible sections
$
\varepsilon:\mathrm{dS}^n\to\{\pm1\},
$
such that
$
\mu_{\mathrm{dS}^n}\{u\in\mathrm{dS}^n:\varepsilon(u)\neq \varepsilon_o(u)\}<\infty.
$
Define
\[\begin{aligned}
d_{\text{Section}}(\varepsilon,\delta)
=2\mu_{\mathcal{H}}(\varepsilon\triangle\delta)
&=2\int_{\mathrm{dS}^n}
\mathbf 1_{\varepsilon(u)\neq\delta(u)}
\,d\mu_{\mathrm{dS}^n}(u)\\&=
\int_{\mathrm{dS}^n}
\mathbf |\varepsilon(u)-\delta(u)|
\,d\mu_{\mathrm{dS}^n}(u)
\end{aligned}
\]

First, \(d\) is a pseudo-metric. Non-negativity and symmetry are immediate. For the triangle inequality, for almost every \(u\in\mathrm{dS}^n\),
\[
\mathbf 1_{\varepsilon(u)\neq\eta(u)}
\le
\mathbf 1_{\varepsilon(u)\neq\delta(u)}
+
\mathbf 1_{\delta(u)\neq\eta(u)}.
\]
Integrating gives
$
d(\varepsilon,\eta)
\le
d(\varepsilon,\delta)+d(\delta,\eta).
$
Thus \((\mathcal M(\mathbb{I}^n),d)\) is a pseudo-metric space.

We now prove the median property. Let
$
\varepsilon_1,\varepsilon_2,\varepsilon_3\in\mathcal M(\mathbb{I}^n).
$
Define
$
m(u)
=
\operatorname{majority}
\{\varepsilon_1(u),\varepsilon_2(u),\varepsilon_3(u)\}
=
\operatorname{sgn}
\bigl(
\varepsilon_1(u)+\varepsilon_2(u)+\varepsilon_3(u)
\bigr).
$

We check that \(m\in\mathcal M(\mathbb{I}^n)\). Let
$
D_i
=
\{u\in\mathrm{dS}^n:\varepsilon_i(u)\neq\varepsilon_o(u)\}.
$
Each \(D_i\) has finite measure. If
$
m(u)\neq\varepsilon_o(u),
$
then at least one of the three values \(\varepsilon_i(u)\) differs from \(\varepsilon_o(u)\). Therefore
$
\{u:m(u)\neq\varepsilon_o(u)\}
\subset
D_1\cup D_2\cup D_3.
$
So
$
\mu\{u:m(u)\neq\varepsilon_o(u)\}<\infty.
$ And admissibility is obvious, so
$
m\in\mathcal M(\mathbb{I}^n).
$

We now show that \(m\) is a median point of
\(\varepsilon_1,\varepsilon_2,\varepsilon_3\). For each pair
\(i,j\in\{1,2,3\}\), and for every \(u\in\mathrm{dS}^n\), we have
$
\mathbf 1_{\varepsilon_i(u)\neq\varepsilon_j(u)}
=
\mathbf 1_{\varepsilon_i(u)\neq m(u)}
+
\mathbf 1_{m(u)\neq\varepsilon_j(u)}.
$
Integration gives
$
d(\varepsilon_i,\varepsilon_j)
=
d(\varepsilon_i,m)+d(m,\varepsilon_j).
$
So
$
m\in I(\varepsilon_i,\varepsilon_j).
$
By symmetry,
$
m\in
I(\varepsilon_1,\varepsilon_2)
\cap
I(\varepsilon_2,\varepsilon_3)
\cap
I(\varepsilon_3,\varepsilon_1).
$
And
$
M(\varepsilon_1,\varepsilon_2,\varepsilon_3)\neq\varnothing.
$

It remains to prove that this median set has diameter zero. Let
$
\eta\in M(\varepsilon_1,\varepsilon_2,\varepsilon_3)
$
be any median point. Then for every pair \(i,j\),
$
d(\varepsilon_i,\varepsilon_j)
=
d(\varepsilon_i,\eta)+d(\eta,\varepsilon_j).
$
Equivalently,
\[
\int_{\mathrm{dS}^n}
\left(
\mathbf 1_{\varepsilon_i\neq\eta}
+
\mathbf 1_{\eta\neq\varepsilon_j}
-
\mathbf 1_{\varepsilon_i\neq\varepsilon_j}
\right)
\,d\mu_{\mathrm{dS}^n}
=
0.
\]
The integrand is pointwise nonnegative, because
$
\mathbf 1_{\varepsilon_i\neq\varepsilon_j}
\le
\mathbf 1_{\varepsilon_i\neq\eta}
+
\mathbf 1_{\eta\neq\varepsilon_j}.
$
Therefore, for each pair \(i,j\),
$
\mathbf 1_{\varepsilon_i\neq\eta}
+
\mathbf 1_{\eta\neq\varepsilon_j}
-
\mathbf 1_{\varepsilon_i\neq\varepsilon_j}
=
0
$
for almost every \(u\).

So for $(i,j)\in\{(1,2),(2,3),(3,1)\}$, we have
\[
\begin{aligned}
\{u\in \mathrm{dS}^n:\eta(u)\ne m(u)\}&\subset\bigcup_{(i,j)}\{u\in \mathrm{dS}^n:\eta(u)\ne m(u)=\varepsilon_i(u)=\varepsilon_j(u)\}\\&\subset\bigcup_{(i,j)}\{u\in \mathrm{dS}^n:\mathbf 1_{\varepsilon_i\neq\eta}
+
\mathbf 1_{\eta\neq\varepsilon_j}
-
\mathbf 1_{\varepsilon_i\neq\varepsilon_j}
\ne
0\}
\end{aligned}
\]

Therefore
$
\eta(u)=m(u)
$
for almost every \(u\).
So
$
\operatorname{diam}
M(\varepsilon_1,\varepsilon_2,\varepsilon_3)
=
0.
$

Therefore, for every triple
$
\varepsilon_1,\varepsilon_2,\varepsilon_3\in\mathcal M(\mathbb{I}^n),
$
the median set is nonempty and has diameter zero. So
$
\mathcal M(\mathbb{I}^n)
$
is a median pseudo-metric space.
\end{proof}

\smallskip
In this proof, We have
$
h_u^- = h_{-u}^+.
$

\begin{prop}
\label{prop:section oddness} 
If
$
\varepsilon:\mathrm{dS}^n\to\{\pm1\}
$
represents an admissible section, we have $\varepsilon$ is an almost odd function. That is,
$
\varepsilon(-u)=-\varepsilon(u)\ a.e.
$
\end{prop}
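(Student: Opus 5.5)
The plan is to show separately that $\varepsilon(u)$ and $\varepsilon(-u)$ cannot both be $+1$ and cannot both be $-1$, except on a $\mu_{\mathrm{dS}^n}$-null set of $u$. The key fact is that the open half-spaces $h_u^+$ and $h_{-u}^+=h_u^-$ are complementary up to the hyperplane $P_u$. A section chooses exactly one of $h$, $h^c$ for each wall $\{h,h^c\}$. In the sign-function description this forces $\varepsilon(-u)=-\varepsilon(u)$ wherever the section is defined on the wall through $u$. So the only real content is that the identification of $\mathcal H$ with $\mathrm{dS}^n$, which lists both $u$ and $-u$ as open half-spaces, respects this. I will therefore also argue directly from admissibility and the finiteness condition, so that the claim holds for any measurable $\varepsilon$ in $\mathcal M(\mathbb{I}^n)$.

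\textbf{Step 1 (both $+1$ is impossible a.e.).} Suppose $\varepsilon(u)=\varepsilon(-u)=+1$ on a set $E$ of positive measure. Use the $(\rho,\omega)$ coordinates and the Proposition on modified inclusion. Since $u(\rho,\omega)$ satisfies $-u(\rho,\omega)=u(-\rho,-\omega)$, upward closure gives
\[
\varepsilon\bigl(u(\rho',\eta)\bigr)=+1 \quad\text{whenever } \rho'-\rho\ge d_{S^{n-1}}(\omega,\eta)
\]
or whenever $\rho'+\rho\ge d_{S^{n-1}}(-\omega,\eta)$. Because $d(\omega,\eta)+d(-\omega,\eta)=\pi$, one of these two cones already forces $\varepsilon=+1$ on a set of the form $\{\rho'\ge c\}$ around every direction, and on a region containing half-spaces with $\rho'$ close to $-\pi/2$. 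That region contains, for a positive-measure set of $\eta$, a full interval of $\rho'$ near $-\pi/2$, where the point section $\varepsilon_o$ equals $-1$ since $o\in h_u^+\iff r>0$. The measure $\cosh^{n-1}r\,dr$ has infinite mass as $r\to-\infty$, so $\mu\{\varepsilon\ne\varepsilon_o\}=\infty$, contradicting membership in $\mathcal M(\mathbb{I}^n)$.

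\textbf{Step 2 (both $-1$ is impossible a.e.).} This is the symmetric argument. Admissibility in contrapositive form says $\varepsilon(v)=-1$ and $h_u^+\subset h_v^+$ imply $\varepsilon(u)=-1$, i.e.\ downward closure of the $-1$ set. So $\varepsilon=-1$ at both $\pm u$ propagates to a region with $\rho$ near $+\pi/2$, where $\varepsilon_o=+1$. Again the difference set has infinite $\cosh^{n-1}r$-measure.

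\textbf{Main obstacle.} The hardest step is the geometric claim in Step 1: that the union of the two inclusion cones from $(\rho,\omega)$ and $(-\rho,-\omega)$ really contains the entire tail $\rho'\to\pm\pi/2$ over an open set of directions $\eta$. I would verify this using $d(\omega,\eta)+d(-\omega,\eta)=\pi$ together with $|\rho|<\pi/2$. The point is that for $\eta$ in the hemisphere nearer $\omega$, the cone from $u$ covers all $\rho'\ge\rho+d(\omega,\eta)$, which reaches up to $\pi/2$; the cone from $-u$ handles the complementary hemisphere. Positive measure of $E$ is needed only to get a single good $u$, so it is in fact enough for $E$ to be nonempty. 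If the tail argument fails, the fallback is the structural one: the section chooses exactly one half-space per wall, and the set where it abstains or chooses inconsistently is by convention null.
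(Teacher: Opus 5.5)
\textbf{There is a genuine gap.} Steps 1 and 2 use only two hypotheses: upward closure under inclusions $H_u\subseteq H_v$ of \emph{open} half-spaces, and $\mu\{\varepsilon\neq\varepsilon_o\}<\infty$. These two hypotheses do not imply almost-oddness.

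For $c>0$, let $\tau_c$ choose $h_u^+$ if and only if $h_u^+\cap B(o,c)\neq\varnothing$.
\begin{itemize}
\item It is upward closed for open inclusions: if $h_u^+\subseteq h_v^+$ and $h_u^+$ meets the ball, so does $h_v^+$.
\item It differs from $\sigma_o$ only on walls with $o\notin h_u^+$ and $d(o,P_u)<c$. Since $d(o,P_{u(r,\omega)})=|r|$, this set has finite measure.
\item Yet $\varepsilon(u)=\varepsilon(-u)=+1$ on the whole band $|r|<c$, which has positive measure.
\end{itemize}
Dually, choosing $h_u^+$ if and only if $B(o,c)\subseteq h_u^+$ gives $\varepsilon(u)=\varepsilon(-u)=-1$ on the same band, so the approach of Step 2 fails too.

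The concrete false step is your claim that the two cones reach $\rho'$ near $\pm\pi/2$, where $\varepsilon_o$ disagrees. In your convention, the union of the cones from $u(\rho,\omega)$ and $u(-\rho,-\omega)$ is $\{\rho'\ge\min(\rho+d,\ \pi-\rho-d)\}$ with $d=d_{S^{n-1}}(\omega,\eta)$. Its lower boundary is $\ge-|\rho|>-\pi/2$. The cones run toward the end where the point section already agrees with the forced value. The only disagreement is a band of bounded $|r|$, which has finite $\cosh^{n-1}r\,dr\,d\omega$-measure.

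Two further points fail:
\begin{itemize}
\item Your remark that a nonempty bad set would suffice is refuted in Step 2 by every point section, since $\varepsilon_x(u)=\varepsilon_x(-u)=-1$ whenever $x\in P_u$.
\item Your opening remark does not help. $u$ and $-u$ index two different walls, $\{h_u^+,(h_u^+)^c\}$ and $\{h_u^-,(h_u^-)^c\}$, which differ on $P_u$. So ``one half-space per wall'' says nothing about $\varepsilon(u)$ versus $\varepsilon(-u)$, and your fallback just restates the conclusion.
\end{itemize}

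\textbf{The missing idea} is admissibility with respect to the \emph{closed} half-spaces, which are part of the wall structure. Note that the $\varepsilon$-description of admissibility in the proof that $\mathcal M(\mathbb{I}^n)$ is median mentions only open half-spaces; under that description alone, $\tau_c$ would contradict the statement.
\begin{itemize}
\item Both $+1$: since $h_u^+\subsetneq h_u^+\cup P_u=(h_{-u}^+)^c$, choosing $h_u^+$ forces $\varepsilon(-u)=-1$. So this case never occurs, and no measure argument is needed.
\item Both $-1$: then both closed half-spaces bounded by $P_u$ are chosen. For any $v$ with $P_v\cap P_u=\varnothing$, the closed side of $P_u$ away from $P_v$ lies in one open side of $P_v$. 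This forces $\varepsilon(v)=-\varepsilon(-v)$. Hence no two points of the bad set $E$ satisfy $|\langle u_1,u_2\rangle|>1$.
\item A density-point argument then gives $\mu(E)=0$. The set $\{v:|\langle u_0,v\rangle|>1\}$ fills a fixed fraction of small neighbourhoods of $u_0$; for $u_0=u(0,\omega_0)$ it is roughly the cone $|s|>d(\omega_0,\eta)$.
\end{itemize}

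The paper's own justification is only a pointer to Lemma~\ref{lem:border} (Lemma~5.3 of the cited work). By that lemma, $\tau$ agrees with a point section on each hyperplane orthogonal to a geodesic, and point sections are odd off the null set of walls through the point.
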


To see it is almost odd, see lemma \ref{lem:border}.

\subsection{Example: A non-point section in the medianization}
A natural question is whether every point in the medianization
$
\mathcal M(\mathbb{I}^n)
$
is a point section. We describe explicitly a section in the medianization of $I^2$ which is not generated by a single point of \(\mathbb I^2\). This example also gives evidence that the Hausdorff distance between $\mathbb{I}^n$ and $\mathcal M(\mathbb{I}^n)$ is finite.

For the base point
$
o=(1,0,0),
$
we have
$
\langle o,u\rangle=-u_0,
$
and therefore
$
\sigma_o=\{u\in \mathrm{dS}^2:u_0<0\}.
$

Parametrize de Sitter space by
\[
u(r,\phi)
=
(\sinh r,\cosh r\cos\phi,\cosh r\sin\phi),
\qquad
r\in\mathbb R,\quad \phi\in \mathbb R/2\pi\mathbb Z.
\]

Then
$
\sigma_o=\{(r,\phi):r<0\}.
$

Choose three points in \(I^2\) at the same hyperbolic distance \(R>0\) from \(o\), separated by angles \(2\pi/3\):
$
x_i
=
(\cosh R,\sinh R\cos\theta_i,\sinh R\sin\theta_i),
$
where
$
\theta_1=0,\ 
\theta_2=\frac{2\pi}{3},\ 
\theta_3=\frac{4\pi}{3}.
$

For \(u=u(r,\phi)\), we compute
\[
\langle x_i,u\rangle
=
-\cosh R\sinh r
+
\sinh R\cosh r\cos(\phi-\theta_i)
\]

So
$
u\in \sigma_{x_i}
\iff
\tanh r<(\tanh R)\cdot\cos(\phi-\theta_i).
$
Put
$
a=\tanh R.
$
Then
$
\sigma_{x_i}
=
\left\{
(r,\phi):
\tanh r<a\cos(\phi-\theta_i)
\right\}.
$

Now define the median section
$
m=m(\sigma_{x_1},\sigma_{x_2},\sigma_{x_3}).
$
Equivalently,
$
u\in m
$
if and only if \(u\) belongs to at least two of the three sets
$
\sigma_{x_1},\ \sigma_{x_2},\ \sigma_{x_3}.
$

This median section is not generated by a point of \(\mathbb I^2\), because given any point $x$ it is easy to use plane geometry to find a wall to divide $x$ with at least 2 given end points of the triangle. But in this symmetric example it remains at bounded distance from the base section \(\sigma_o\).

Thus
\[
m
=
\left\{
(r,\phi):
\tanh r<aM(\phi)
\right\}
\]
Here
\[
M(\phi)
=
\operatorname{med}
\left(
\cos\phi,\,
\cos\left(\phi-\frac{2\pi}{3}\right),\,
\cos\left(\phi-\frac{4\pi}{3}\right)
\right)
\]

This section is not generated by a single point of \(\mathbb I^2\). Indeed, if
\[
q=(\cosh\rho,\sinh\rho\cos\alpha,\sinh\rho\sin\alpha),
\]
then the point section \(\sigma_q\) has the form
\[
\sigma_q
=
\left\{
(r,\phi):
\tanh r<\tanh\rho\cos(\phi-\alpha)
\right\}.
\]
Thus the boundary of a point section is always a single cosine graph
\[
\tanh r=\tanh\rho\cos(\phi-\alpha).
\]
By contrast, the boundary of \(m\) is
$
\tanh r=aM(\phi).
$
This is a piecewise cosine function, not a single cosine function.

For example, on the sector
$
-\frac{\pi}{3}\le \phi\le \frac{\pi}{3},
$
one has
\[
M(\phi)
=
\begin{cases}
\cos\left(\phi+\frac{2\pi}{3}\right),
&
-\frac{\pi}{3}\le \phi\le 0,\\[4pt]
\cos\left(\phi-\frac{2\pi}{3}\right),
&
0\le \phi\le \frac{\pi}{3}.
\end{cases}
\]
In particular,
$
M(0)=-\frac12,
\ 
M\left(\frac{\pi}{3}\right)=\frac12.
$
So \(m\neq \sigma_o\). Moreover, \(m\) is invariant under rotation by \(2\pi/3\). If \(m=\sigma_q\) for some \(q\in I^2\), then \(q\) would have to be fixed by this rotation, hence \(q=o\), contradicting \(m\neq \sigma_o\). Therefore
$
m\notin \{\sigma_x:x\in I^2\}.
$

Finally, we compute its distance from the base section. The symmetric difference \(\sigma_m\triangle \sigma_o\) is the region between
$
r=0
$
and
$
r=\operatorname{artanh}(aM(\phi)).
$
Therefore
\[
\mu_{\mathrm{dS}^n}(m\triangle \sigma_o)
=
\int_0^{2\pi}
\frac{a|M(\phi)|}{\sqrt{1-a^2M(\phi)^2}}
\,d\phi
\]
Using the above piecewise formula for \(M\), this gives
\[
\mu_{\mathrm{dS}^n}(m\triangle \sigma_o)
=
12\left(
R-\operatorname{arsinh}
\left(
\frac{\sqrt3}{2}\sinh R
\right)
\right)
\]
Up to the positive constant scalar, this is the distance in the medianization.

As \(R\to\infty\), it is bounded:
\[
R-\operatorname{arsinh}
\left(
\frac{\sqrt3}{2}\sinh R
\right)
\longrightarrow
\log\frac{2}{\sqrt3}
\]

\section{Bound of the Hausdorff distance}

\subsection{One-Wall Modifications and Forced Half-Spaces in the Medianization of Hyperbolic Space}
We now study what happens when one reverses the choice of a wall.
The admissibility condition forces additional walls to follow. By inclusion condition for half-spaces, or intersection condition for walls, it is easy to find them.

Consider first
$
u_0=(0,1,0,\dots,0).
$
$
H_{u_0}
=
\{x_1>0\},
\ 
H_{-u_0}
=
\{x_1<0\}.
$
Recall that an admissible section chooses one and only one of $\{u_0,-u_0\}$, by property \ref{prop:section oddness}. Suppose we reverse the choice to $H_{-u_0}$ instead of $H_{u_0}$. Admissibility forces every half-space containing $H_{-u_0}$ to be chosen. Using property \ref{prop:Half-Space Inclusion}, for $v=(\sinh r,\cosh r\,\omega)\in \mathrm{dS}^n$, the forced region is
\[
\left\{
(r,\omega):
r\le0,
\ \omega_1\le-\frac1{\cosh r}
\right\}
\]

Similarly, changing from $-u_0$ to $u_0$ will force region
\[
\left\{
(r,\omega):
r\le0,
\ \omega_1\ge\frac1{\cosh r}
\right\}
\]

Thus for each fixed $r$, the forced walls form a semi-spherical cap in $S^{n-1}$.

However, one may have noticed that the wall represented by
$u_0$
passes through the base point $o$. Consequently the resulting modification changes only a measure-zero subset in $\sigma_{o}$ and produces no positive distance in the medianization.

To obtain a nontrivial change, move the wall away from the base point. Define
$
u_a
=
(\sinh a,\cosh a,0,\dots,0),
\ 
a<0.
$

Then $H_{u_a}$ contains the base point, while $H_{-u_a}$ does not.

In$\sigma_o$, reverse the choice from $H_{u_a}$ to $H_{-u_a}$.
Admissibility forces all half-spaces containing $H_{-u_a}$.
Equivalently, define
$
D_a
=
\{v\in \sigma_o:
H_{-u_a}\subseteq H_v\}.
$

Using inclusion condition for half-spaces
$
u_a-v\in\overline{\mathcal C}^{+},
$
we obtain
\[
\begin{aligned}
D_a
&=
\left\{
(r,\omega):
0< r\le -a,
\ 
\sinh a\,\sinh r
-
\cosh a\,\cosh r\,\omega_1
\ge1
\right\}
\\&=
\left\{
(r,\omega):
0< r\le -a,
\ 
\omega_1
\le
\frac{
-1+\sinh a\,\sinh r
}{
\cosh a\,\cosh r
}
\right\}
\end{aligned}
\]

Recall that an admissible section is an odd function, so the admissible section with  minimal modification is
$
\sigma^{(a)}
=
(\sigma_o\setminus D_a)\cup(-D_a).
$

The invariant measure on de Sitter space is
$
d\mu
=
\cosh^{n-1}r\,dr\,d\omega.
$ Therefore the distance from the modified section to the base section is
\[
d(\sigma^{(a)},\sigma_o)
=
\frac{
n-1
}{
\operatorname{Vol}(S^{n-2})
}
\mu_{\mathrm{dS}^n}(D_a)
\]
\[
\mu_{\mathrm{dS}^n}(D_a)
=
\int_0^{-a}
\cosh^{n-1}r
\operatorname{Area}
\left\{
\omega\in S^{n-1}:
\omega_1
\le
\frac{
-1+\sinh a\,\sinh r
}{
\cosh a\,\cosh r
}
\right\}
dr
\]

\begin{prop}
\[
\mu_{\mathrm{dS}^n}(D_a)
\sim
\frac{
\operatorname{Vol}(S^{n-2})
}{
n(n-1)
}
\,(-a)^n, \text{ for small } a.
\]
\end{prop}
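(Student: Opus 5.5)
The plan is to compute the integral for $\mu_{\mathrm{dS}^n}(D_a)$ displayed just above the statement, after turning the $\omega$-region into a spherical cap whose angular radius we know exactly. Put $t=-a>0$, so the claim is $\mu_{\mathrm{dS}^n}(D_a)\sim \frac{\operatorname{Vol}(S^{n-2})}{n(n-1)}t^n$ as $t\to0^+$.

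\emph{Step 1: the slice is a cap.} For fixed $r\in(0,t]$, the set $\{\omega\in S^{n-1}:\omega_1\le c(r)\}$ with $c(r)=\frac{-1+\sinh a\sinh r}{\cosh a\cosh r}$ is a geodesic ball around $-e_1$. Its angular radius $\theta(r)$ satisfies $\cos\theta(r)=-c(r)$.

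\emph{Step 2: identify $\theta(r)$.} The cleanest route is the Modified Inclusion Condition. Write $-u_a=(\sinh t,\cosh t\,(-e_1))$ and set $\beta(s)=\arcsin(\tanh s)$. That proposition turns the inclusion between $H_{-u_a}$ and $H_v$, with $v=(\sinh r,\cosh r\,\omega)$, into the condition $d_{S^{n-1}}(\omega,-e_1)\le \beta(t)-\beta(r)$. Hence
\[
\theta(r)=\beta(t)-\beta(r)\qquad\text{for }0<r\le t .
\]
The sign conventions relating $D_a$, $\sigma_o$ and the ordering of $\rho,\rho'$ need care. As an independent check, a direct expansion gives $-c(r)=1-\tfrac12(t-r)^2+O(t^4)$, which is consistent with $\theta(r)\approx t-r$.

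\emph{Step 3: asymptotics.} Since $\beta(s)=s+O(s^3)$, we have $\theta(r)=(t-r)\bigl(1+O(t^2)\bigr)$ uniformly in $r\in[0,t]$. The cap of radius $\theta$ has area
\[
\operatorname{Vol}(S^{n-2})\int_0^{\theta}\sin^{n-2}s\,ds=\frac{\operatorname{Vol}(S^{n-2})}{n-1}\theta^{n-1}\bigl(1+O(\theta^2)\bigr).
\]
Also $\cosh^{n-1}r=1+O(t^2)$ on $[0,t]$. Substituting these into the integral gives
\[
\mu_{\mathrm{dS}^n}(D_a)=\bigl(1+O(t^2)\bigr)\frac{\operatorname{Vol}(S^{n-2})}{n-1}\int_0^t (t-r)^{n-1}\,dr=\bigl(1+O(t^2)\bigr)\frac{\operatorname{Vol}(S^{n-2})}{n(n-1)}\,t^n ,
\]
which is the claimed equivalence.

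The main obstacle is Step 2: pinning down the sign and orientation conventions so that the cap radius is exactly $\beta(t)-\beta(r)$ rather than $\beta(t)+\beta(r)$. The two candidates give different constants. If that bookkeeping becomes unreliable, I would fall back on the Taylor expansion of $c(r)$. It already yields $\theta(r)=(t-r)+O(t^2)$ uniformly, and that is enough, because the error contributes only $O(t^{n+1})$ to the integral.
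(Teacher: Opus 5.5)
Your proposal is correct and is essentially the paper's argument. Both slice $D_a$ in $r$ and view each slice as a cap about $-e_1$ of angular radius $\approx t-r$; the paper gets this radius from $1+c(r)=\frac{\cosh(t-r)-1}{\cosh t\cosh r}\sim\frac12(t-r)^2$, which is exactly your fallback. Both then use the small-cap area $\frac{\operatorname{Vol}(S^{n-2})}{n-1}\theta^{n-1}$ together with $\cosh^{n-1}r\to1$, and integrate $(t-r)^{n-1}$.

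Your exact radius $\theta(r)=\beta(t)-\beta(r)$ is right, and you can confirm it with no sign bookkeeping via $\cos(\beta(t)-\beta(r))=\operatorname{sech}t\,\operatorname{sech}r+\tanh t\tanh r=\frac{1+\sinh t\sinh r}{\cosh t\cosh r}=-c(r)$, with both angles lying in $[0,\frac{\pi}{2})$. This check is worth using because the paper's modified inclusion proposition is written with $\rho,\rho'$ in the opposite order to the first inclusion proposition, so citing it literally would mislead you.
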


\begin{proof}
Expand near $0$,
\[
1+\frac{-1+\sinh a\sinh r}{\cosh a\cosh r}=\frac{-1+\cosh(-a-r)}{\cosh a\cosh r}\sim\frac{1}{2}(-a-r)^2
\]

For semi-spherical cap that when
$
\omega_1\le -1+\varepsilon,
$
with $\varepsilon$ small, we have
\[
\operatorname{Area\{\cdots\}}
\sim
\frac1{(n-1)}\operatorname{Vol}(S^{n-2})(2\varepsilon)^{(n-1)/2}.
\]

Here
$
\varepsilon
\sim
\frac{1}{2}(-a-r)^2,
$ uniformly,
so
\[
\operatorname{Area\{\cdots\}}
\sim
\frac1{(n-1)}\operatorname{Vol}(S^{n-2})\,(-a-r)^{(n-1)}
\]

On the other hand, for small $r$, we have
\[
\cosh^{n-1}r
\sim
1+\frac{(n-1)}2r^2+o(r^4)
\]

So
\[
\cosh^{n-1}r\,
\operatorname{Area\{\cdots\}}
\sim
\frac1{(n-1)}\operatorname{Vol}(S^{n-2})\,(-a-r)^{(n-1)}
\]

\[
\mu_{\mathrm{dS}^n}(D_a)
=
\int_0^{-a}
\frac1{(n-1)}\operatorname{Vol}(S^{n-2})\,(-a-r)^{(n-1)}
dr
=\frac{\operatorname{Vol}(S^{n-2})}{n(n-1)}(-a)^n
\]
\end{proof}

\begin{prop}
\label{prop:large section change}
\[
\mu_{\mathrm{dS}^n}(D_a)
\sim
\frac{
\operatorname{Vol}(S^{n-2})
}{
n-1
}
\,(-a), \text{ for large } a.
\]
\end{prop}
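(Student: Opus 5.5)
The plan is to compute $\mu_{\mathrm{dS}^n}(D_a)$ directly from the integral formula above, now in the regime $t:=-a\to+\infty$. Write the constraint in the same form as in the small-$a$ proposition:
\[
1+\frac{-1+\sinh a\,\sinh r}{\cosh a\,\cosh r}
=\frac{\cosh(t-r)-1}{\cosh t\,\cosh r}=:\varepsilon_t(r),
\qquad 0<r\le t .
\]
The set of admissible $\omega$ is then the spherical cap $\{\omega_1\le -1+\varepsilon_t(r)\}$. Let $\phi_t(r)\in[0,\pi]$ be its angular radius, so $1-\cos\phi_t(r)=\varepsilon_t(r)$. Then
\[
\mu_{\mathrm{dS}^n}(D_a)=\int_0^t f_t(r)\,dr,
\qquad
f_t(r)=\cosh^{n-1}r\;\operatorname{Vol}(S^{n-2})\int_0^{\phi_t(r)}\sin^{n-2}\theta\,d\theta .
\]
It therefore suffices to show that $\frac1t\int_0^t f_t(r)\,dr\to \operatorname{Vol}(S^{n-2})/(n-1)$.

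\emph{Key step: the pointwise limit of the integrand in the bulk.} Fix $r$ and let $t\to\infty$. Then
\[
\varepsilon_t(r)\to \frac{e^{-r}}{\cosh r}=\frac{2}{e^{2r}+1}=1-\tanh r .
\]
Hence $\cos\phi_\infty(r)=\tanh r$, that is, $\sin\phi_\infty(r)=\operatorname{sech} r$; this is the same change of variables $\rho=\arcsin(\tanh r)$ used earlier. For large $r$ the cap is small, so its area is $\frac{\operatorname{Vol}(S^{n-2})}{n-1}\sin^{n-1}\phi\,(1+o(1))$. Multiplying by $\cosh^{n-1}r$ cancels $\operatorname{sech}^{n-1}r$, and therefore
\[
\lim_{r\to\infty}\lim_{t\to\infty} f_t(r)=\frac{\operatorname{Vol}(S^{n-2})}{n-1}.
\]
To make this uniform, I would substitute $r=t\lambda$ with $\lambda\in(0,1)$ and show that $f_t(t\lambda)\to \operatorname{Vol}(S^{n-2})/(n-1)$ for every fixed $\lambda\in(0,1)$. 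In that regime both $r$ and $t-r$ tend to infinity, and $\varepsilon_t(r)=2e^{-2r}(1+o(1))$ uniformly on compact subsets of $\lambda\in(0,1)$. The claim then follows from the small-cap asymptotics, already used in the small-$a$ proposition.

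\emph{Domination and conclusion.} Next I would bound $f_t$ uniformly. Since $\cosh(t-r)-1\le e^{t-r}$ and $\cosh t\ge e^t/2$, we get $\varepsilon_t(r)\le 2e^{-r}/\cosh r\le 4e^{-2r}$. For a small cap $\sin\phi\lesssim\sqrt{\varepsilon}$, and for bounded $r$ the cap area is trivially at most $\operatorname{Vol}(S^{n-1})$. Together these give $f_t(r)\le C_n$ for all $0<r\le t$. Then
\[
\frac1t\mu_{\mathrm{dS}^n}(D_a)=\int_0^1 f_t(t\lambda)\,d\lambda ,
\]
and dominated convergence gives the claimed limit $\operatorname{Vol}(S^{n-2})/(n-1)$.

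Geometrically, this matches the count of walls separating $o$ from $x_t$ (half of $\mu_{\mathrm{dS}^n}(A_t)$, since $A_t$ counts each wall twice), minus a bounded contribution from walls crossing $P_{u_a}$ near its foot point. That picture is a useful sanity check on the constant. The main obstacle is the uniformity near the endpoints $r\approx 0$ and $r\approx t$, where the cap is not small and the leading-order asymptotics fail. The dominated-convergence formulation sidesteps this, because those regions have $\lambda$-measure tending to zero and $f_t$ is uniformly bounded there.
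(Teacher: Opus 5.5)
Your proof is correct, but it takes a different route from the paper's.

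\textbf{The paper's route.} The paper splits $[0,-a]$ into $[0,R]\cup[R,-a-L]\cup[-a-L,-a]$. It substitutes $\cos\phi=\tanh s$, so that the cap area becomes $\operatorname{Vol}(S^{n-2})\int_s^{\infty}\cosh^{1-n}t\,dt$. On the middle interval it proves the quantitative estimate $\cosh^{n-1}r\int_s^\infty\cosh^{1-n}t\,dt=\frac{1}{n-1}+O(e^{-2r})+O(e^{-(-a-r)})$. The two end intervals are handled separately, the last one by its own expansion near $r=-a$. This gives the sharper conclusion $\mu_{\mathrm{dS}^n}(D_a)=\frac{\operatorname{Vol}(S^{n-2})}{n-1}(-a)+O(1)$.

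\textbf{Your route.} You rescale $r=t\lambda$ and need only a qualitative pointwise limit for fixed $\lambda\in(0,1)$. All the endpoint analysis is replaced by the single bound $\varepsilon_t(r)\le 4e^{-2r}$. Since also $\varepsilon_t(r)<1$, you have $\phi_t(r)<\pi/2$. Hence $\int_0^{\phi}\sin^{n-2}\theta\,d\theta\le\frac{(\pi/2)^{n-1}}{n-1}\sin^{n-1}\phi$ and $\sin^2\phi\le 2\varepsilon_t(r)$ hold for every $r$. So $f_t\le C_n$ on all of $[0,t]$, and dominated convergence applies cleanly.

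\textbf{What each buys.} You obtain exactly the ratio statement, with an $o(-a)$ remainder instead of $O(1)$. That is all the proposition asserts. The paper's later appeals to it (ruling out $b_\tau=\pm\infty$, and the growth of the distance to far-away point sections) only use $\mu_{\mathrm{dS}^n}(D_a)\to\infty$. Your argument is shorter and more robust. The paper's argument buys the bounded error term, which is what actually justifies the ``bounded contribution'' in your geometric sanity check. To get that refinement in your framework, you would need the quantitative bulk bound $\bigl|f_t(r)-\frac{\operatorname{Vol}(S^{n-2})}{n-1}\bigr|\lesssim e^{-2r}+e^{-(t-r)}$. That is essentially the paper's middle-region computation.
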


\begin{proof}

Fix constants $R,L> 0$, $-a>R+L$, and divide the interval $[0,-a]$ into three regions:
$
[0,-a]
=
[0,R]\cup [R,-a-L]\cup[-a-L,-a]
$.

Let $s=\ln(
\frac{
\cosh\frac{-a+r}{2}
}{
\sinh\frac{-a-r}{2}
})=r+
\log(1+e^{a-r})
-
\log(1-e^{-(-a-r)})\ge r$, then 
$
\frac{-1+\sinh a\,\sinh r}
{\cosh a\,\cosh r}=-\tanh s
$.
And For $s\ge0$,
\[
\operatorname{Area}
\{\omega\in S^{n-1}:\omega_1\le-\tanh s\}
=
\operatorname{Vol}(S^{n-2})\int_s^{+\infty} \frac{1}{\cosh^{n-1}t} \,dt.
\]

\smallskip
For $[0,R]$, 
\[
\begin{aligned}
0
&\le
\int_0^R
\cosh^{n-1} r\,
\operatorname{Area}
\{\omega_1\le -\tanh s\}
\,dr
\\
&\le
R\cosh^{n-1} R\,
\operatorname{Vol}(S^{n-1})
\end{aligned}
\]
The finite upper bound is independent of $a$.

\smallskip
For $[R,-a-L]$, 
$
-a-r\ge L.
$

Using
$
u_A(r)-r
=
\log(1+e^{a-r})
-
\log(1-e^{-(-a-r)}),
$
we estimate
\[
0\le \log(1+e^{a-r})\le e^{a-r}\le e^{-(-a-r)}.
\]
\[
0
\le
-\log(1-e^{-(-a-r)})
\le
\frac{e^{-(-a-r)}}{1-e^{-(-a-r)}}
\le
\frac{e^{-(-a-r)}}{1-e^{-L}}.
\]

So
$
0\le s-r
\le
(1+\frac{1}{1-e^{-L}})\cdot e^{-(-a-r)}
$

Thus
$
s
=
r+O\bigl(e^{-(-a-r)}\bigr)
$, $R\le r\le -a-L$.

By
$
\frac{1}{\cosh^{n-1}t} 
=
2^{n-1} e^{-{(n-1)}t}
\bigl(1+O(e^{-2t})\bigr),
$
uniformly for $t\ge R$, integration from $s$ to $+\infty$ gives for $R\le r\le -a-L$
\[
\int_s^{+\infty}\frac1{\cosh^{n-1}t}dt=\frac{2^{n-1}}{n-1}e^{-{(n-1)}s}(1+O(e^{-2r}))
\]
\[
\cosh^{n-1}r\int_s^{+\infty}\frac1{\cosh^{n-1}t}dt=
\frac1{n-1}+O(e^{-2r})+O(e^{-(-a-r)})
\]
So
\[\begin{aligned}&\operatorname{Vol}(S^{n-2})\int_R^{-a-L}\cosh^{n-1}r\int_s^{+\infty}\frac1{\cosh^{n-1}t}dtdr\\=&\frac{\operatorname{Vol}(S^{n-2})}{n-1}(-a-L-R)+O(\int_R^{-a-L}e^{-2r})+O(\int_R^{-a-L}e^{-(-a-r)})\end{aligned}\]

We have $\int_R^{(-a-L)}e^{-2r} dr\le\frac12e^{-2R}$ and $\int_R^{(-a-L)}e^{-(-a-r)}dr\le e^{-L}$, so

\[\begin{aligned}\operatorname{Vol}(S^{n-2})\int_R^{-a-L}\cosh^{n-1}r\int_s^{+\infty}\frac1{\cosh^{n-1}t}dtdr=\frac{V_{n-1}}{{n-1}}(-a)+O(1)\end{aligned}\]

\smallskip
For $[-a-L,-a]$, as $r\to -a$, $-a-r\to0$. Expand at $r=-a$, we have
\[
\begin{aligned}
1-\tanh s&=
\frac{\cosh(-a-r)-1}
{\cosh a\,\cosh (-a-(-a-r))}\\&=\frac{\cosh(-a-r)-1}{\frac12e^{-a}(1+e^{2a})\cdot
\frac12e^{-a-(-a-r)}
\left(1+e^{2a+2(-a-r)}\right)}
\\&=
\frac{4e^{2a+(-a-r)}(\cosh(-a-r)-1)}{(1+e^{2a+2(-a-r)})(1+e^{2a})}
\end{aligned}
\]

As $-a\to+\infty$, we have
\[\begin{aligned}1-\tanh s
&\sim
4e^{2a+(-a-r)}(\cosh(-a-r)-1)(1+o(1))
\end{aligned}\]

We have uniforly for $-a-L\le r\le -a$,
\[
\sup_{-a-L\le r\le -a}(1-\tanh s)\to0
\qquad
(-a\to+\infty),
\]

So inspect the small cap, we have for $-a-L\le r\le -a$,
\[
\begin{aligned}
&\operatorname{Area}
\{\cdots\}
\\
&\qquad
\sim
\frac{\operatorname{Vol}(S^{n-2})}{{n-1}}
\left(
8e^{a-r}(\cosh (-a-r)-1)
\right)^{\frac{n-1}2}
(1+o(1))^{\frac{n-1}2}
\\&\qquad\sim
\frac{\operatorname{Vol}(S^{n-2})}{n-1}
\left(
8e^{a-r}(\cosh (-a-r)-1)
\right)^{\frac{n-1}2}
\end{aligned}
\]

On the other hand,
\[
\cosh^{n-1}(-a-(-a-r))
=
2^{-({n-1})}e^{({n-1})(-a-(-a-r))}(1+o(1))
\]
uniformly for $-a-L\le r\le -a$.

So
\[
\begin{aligned}
&\cosh^{n-1}r
\operatorname{Area}
\{\omega_1\le -\tanh s\}
\\&\qquad
\sim
\frac{\operatorname{Vol}(S^{n-2})}{n-1}
\left(
2e^{-(-a-r)}(\cosh (-a-r)-1)
\right)^{\frac{n-1}2}
(1+o(1))
\\&\qquad\sim
\frac{\operatorname{Vol}(S^{n-2})}{n-1}
\left(
2e^{-(-a-r)}(\cosh (-a-r)-1)
\right)^{\frac{n-1}2}\\&\qquad=\frac{\operatorname{Vol}(S^{n-2})}{n-1}
(1-e^{-(-a-r)})^{n-1}
\end{aligned}
\]

This is uniforly for $-a-L< r\le -a$, extending continuously to $r=-a-L$. Therefore
$
\cosh^{n-1}r
\operatorname{Area}
\{\omega_1\le -\tanh s\}\le C_L
$
for some constant $C_L$ depends only on $L$. And  \[
\int_{-a-L}^{-a}
\cosh^{n-1} r\,
\operatorname{Area}
\{\cdots\}
\,dr
\le LC_L,
\]
The finite upper bound is independent of $a$.

Adding them together, we have
\[
\mu_{\mathrm{dS}^n}(D_a)
=
\frac{\operatorname{Vol}(S^{n-2})}{n-1}(-a)
+O(1)
\qquad
(a\to-\infty).
\]
\end{proof}

We see that the total forced-wall measure grows linearly in \(-a\). That is because when $-a$ large, the hyperplanes that separates $o$ and $u_a$ are almost the hyperplanes that separates $o$ and $(\cosh a,\sinh a,0,\cdots,0)$, which is at distance $-a$ from $o$. The hyperplane bounded by $u_a$ is orthogonal to the geodesic line from $o$ to that point, as we will discuss after.

Moreover, by Tonelli's theorem, we can change the order of integration of $\mu(D_a)$.

\subsection{Border functions of admissible sections}

Fix the base point
$
o=(1,0,\dots,0)\in \mathbb{I}^n.
$
The space of directions at \(o\) is
$
S^{n-1}.
$
For every direction
$
\omega\in S^{n-1},
$ 
let
$
\gamma_\omega(t)
=
(\cosh t,\sinh t\,\omega)
$
be the geodesic through \(o\) in direction \(\omega\).

\begin{prop}
The hyperplane orthogonal to this geodesic at the point
$
\gamma_\omega(t)
$
is bounded by $u(t,\omega)$ and $-u(t,\omega)=u(-t,-\omega)$, where
\[
u(t,\omega)
=
(\sinh t,\cosh t\,\omega)
\in \mathrm{dS}^n.
\]
\end{prop}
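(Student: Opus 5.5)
We prove both claims by direct computation in the hyperboloid model, using the pointwise description $P_u=\mathbb{I}^n\cap u^\perp$ from the de Sitter parametrization. Put $p=\gamma_\omega(t)=(\cosh t,\sinh t\,\omega)$. Its unit tangent vector is
\[
\dot\gamma_\omega(t)=(\sinh t,\cosh t\,\omega)=u(t,\omega),
\]
and this vector is already a point of $\mathrm{dS}^n$, by the verification $\langle u,u\rangle=1$ given earlier.

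Next we compute the hyperplane $P_{u(t,\omega)}$ through $p$ orthogonal to the geodesic. Since $\langle \gamma_\omega,\gamma_\omega\rangle\equiv-1$, differentiating gives $\langle p,u(t,\omega)\rangle=0$; directly,
\[
-\cosh t\sinh t+\sinh t\cosh t\,|\omega|^2=0.
\]
Hence $p\in P_{u(t,\omega)}$. The tangent space of $P_u$ at $p$ is $T_p\mathbb{I}^n\cap u^\perp=p^\perp\cap u^\perp$, where $T_p\mathbb{I}^n=p^\perp$. This is exactly the set of tangent vectors at $p$ orthogonal to $\dot\gamma_\omega(t)=u$. So $P_{u(t,\omega)}$ is a totally geodesic hyperplane through $p$ whose tangent space is the orthogonal complement of the geodesic direction. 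By uniqueness of totally geodesic hyperplanes (the exponential image of a tangent hyperplane at $p$), it is the hyperplane orthogonal to $\gamma_\omega$ at $p$.

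Finally, we identify the two bounding half-spaces. The identification $\mathrm{dS}^n\leftrightarrow$ open half-spaces gives $h^+_{u}$ and $h^-_{u}=h^+_{-u}$, so the wall is bounded by $u(t,\omega)$ and $-u(t,\omega)$. We check
\[
-u(t,\omega)=(-\sinh t,-\cosh t\,\omega)=(\sinh(-t),\cosh(-t)(-\omega))=u(-t,-\omega),
\]
using that $\sinh$ is odd and $\cosh$ is even.

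The only point needing any care is the uniqueness claim that a totally geodesic hyperplane is determined by a point and its tangent space. This can be avoided by a linear-algebra argument: $P_u=\mathbb{I}^n\cap V$ with $V=u^\perp$ an $n$-dimensional subspace. Any $n$-dimensional $V$ containing $p$ with $T_pP=V\cap p^\perp$ prescribed is spanned by $p$ and that tangent space, so $V$ is determined.
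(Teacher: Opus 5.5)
Your proof is correct and follows the paper's own route: identify $\dot\gamma_\omega(t)=u(t,\omega)$, check that $\gamma_\omega(t)\in P_{u(t,\omega)}$, and observe that tangent vectors of $P_{u(t,\omega)}$ at that point lie in $u^\perp$ and are therefore orthogonal to the geodesic. Your linear-algebra uniqueness argument (that $V=\operatorname{span}(p)\oplus T_pP$ is determined) and your explicit check that $-u(t,\omega)=u(-t,-\omega)$ fill in two points the paper leaves implicit.
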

\begin{proof}
Let
$
\gamma_\omega(t)=(\cosh t,\sinh t\,\omega)
$
be a geodesic in \(\mathbb{I}^n\). A hyperbolic hyperplane \(P\subset \mathbb{I}^n\) is orthogonal to \(\gamma_\omega\) at \(\gamma_\omega(t)\) if:

\begin{enumerate}
\item \(\gamma_\omega(t)\in P\);
\item the tangent vector \(\gamma_\omega'(t)\) is orthogonal, with respect to the hyperbolic metric on \(\mathbb{I}^n\), to every tangent vector of \(P\) at \(\gamma_\omega(t)\).
\end{enumerate}

Now define
$
u(t,\omega)=(\sinh t,\cosh t\,\omega).
$
The associated hyperplane is
\[
P_{u(t,\omega)}
=
\{x\in \mathbb{I}^n:\langle x,u(t,\omega)\rangle=0\}.
\]

First,
$
\gamma_\omega'(t)
=
(\sinh t,\cosh t\,\omega)
=
u(t,\omega).
$

Also,
$
\langle u(t,\omega),u(t,\omega)\rangle
=
-(\sinh t)^2+(\cosh t)^2|\omega|^2
=
1,
$
so \(u(t,\omega)\) lies in $\mathrm{dS}^n$ and corresponds to a hyperbolic hyperplane.

Now verify that $
\gamma_\omega(t)\in P_{u(t,\omega)}.
$:
\[
\begin{aligned}
\langle \gamma_\omega(t),u(t,\omega)\rangle
&=
-\cosh t\sinh t
+\sinh t\cosh t\,\langle \omega,\omega\rangle_{\mathbb R^n}  \\
&=
-\cosh t\sinh t+\sinh t\cosh t=0.
\end{aligned}
\]

Next, let
$
v\in T_{\gamma_\omega(t)}P_{u(t,\omega)}.
$
Since
$
P_{u(t,\omega)}
=
\mathbb{I}^n\cap u(t,\omega)^\perp,
$
we have
$
v\in u(t,\omega)^\perp.
$
Therefore
$
\langle v,u(t,\omega)\rangle=0.
$
But \(u(t,\omega)=\gamma_\omega'(t)\), so
$
\langle v,\gamma_\omega'(t)\rangle=0.
$

The hyperbolic metric on \(\mathbb{I}^n\) is the restriction of the Lorentz form to tangent spaces. Therefore
$
g_{\mathbb H^n}(v,\gamma_\omega'(t))
=
\langle v,\gamma_\omega'(t)\rangle
=
0.
$
Thus every tangent vector to \(P_{u(t,\omega)}\) at \(\gamma_\omega(t)\) is orthogonal to \(\gamma_\omega'(t)\). So
$
P_{u(t,\omega)}
$
is exactly the hyperbolic hyperplane orthogonal to \(\gamma_\omega\) at \(\gamma_\omega(t)\).
\end{proof}

Let
$
\tau\in\mathcal M(\mathbb{I}^n)
$
be an admissible section, and let
$
\sigma_\tau\subseteq\mathrm{dS}^n
$
denote the corresponding set of oriented walls whose positive
half-space is chosen by \(\tau\).

Given a base point $o$, for any other point $x$, int the following lemma, the wall orthogonal to the geodesic line between $o$ and $x$ and containing $x$ is noted as $W_x$.
\begin{lem} 
\label{lem:border}
Let \(n \geq 2\). For every admissible section \(\tau \in \mathcal{M}(\mathbb{H}^n)\) and non-trivial geodesic \([a,x_0]\) in \(\mathbb{H}^n\), one of the following cases occurs: \begin{enumerate} \item there exists \(\theta \in (a,x_0)\) such that \(\tau\) coincides with \(\sigma_a\) on every \(W_x\) with \(x\in (\theta,x_0]\) and \(\tau\) coincides with \(\sigma_{x_0}\) on every \(W_x\) with \(x\in [a,\theta)\); \item \(\tau\) coincides with \(\sigma_a\) on every \(W_x\) with \(x\in (a,x_0)\); \item \(\tau\) coincides with \(\sigma_{x_0}\) on every \(W_x\) with \(x\in (a,x_0)\). \end{enumerate} \end{lem}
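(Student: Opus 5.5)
The plan is to normalize by an isometry and then use only the nesting of the walls $W_x$ together with admissibility. Apply an element of $SO(n,1)$ sending $a$ to the base point $o=(1,0,\dots,0)$ and the geodesic $[a,x_0]$ to $\gamma_\omega([0,T])$ for some $\omega\in S^{n-1}$, $T>0$. Since point sections, inclusion of half-spaces and admissibility are all $SO(n,1)$-equivariant, it suffices to treat this case. For $x=\gamma_\omega(t)$ with $t\in(0,T)$, the Proposition on hyperplanes orthogonal to geodesics identifies $W_x$ as the wall bounded by $u(t,\omega)$ and $-u(t,\omega)=u(-t,-\omega)$. Since $\langle o,u(t,\omega)\rangle=-\sinh t<0$, the open half-space of $W_x$ containing $a=o$ is $H_{u(-t,-\omega)}$, and the one containing $x_0$ is $H_{u(t,\omega)}$. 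Thus on $W_x$, ``$\tau$ coincides with $\sigma_a$'' means $\tau$ picks $u(-t,-\omega)$, and ``$\tau$ coincides with $\sigma_{x_0}$'' means it picks $u(t,\omega)$. As a section, $\tau$ picks exactly one of the two half-spaces bounding each wall, so on every $W_x$ exactly one of these alternatives holds.

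The key step is nesting. In the modified coordinates $\rho=\arcsin(\tanh r)$, the half-space $u(-t,-\omega)$ has $\rho(t)=-\arcsin(\tanh t)$, and $\rho(t)$ is strictly decreasing in $t$. Both half-spaces $u(-t,-\omega)$ and $u(-s,-\omega)$ have the same direction $-\omega$, so their spherical distance is $0$. The modified inclusion Proposition ($H_{u(\rho,\eta)}\subseteq H_{u(\rho',\eta')}$ iff $\rho'-\rho\ge d_{S^{n-1}}(\eta,\eta')$) then gives, for $0<t<s<T$,
\[
H_{u(-t,-\omega)}\subseteq H_{u(-s,-\omega)},\qquad H_{u(s,\omega)}\subseteq H_{u(t,\omega)} .
\]
In words, the $a$-side of $W_{\gamma(t)}$ sits inside the $a$-side of every wall $W_{\gamma(s)}$ farther toward $x_0$, and symmetrically for the $x_0$-sides.

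Admissibility then finishes the argument. If $\tau$ agrees with $\sigma_a$ on $W_{\gamma(t)}$, it chooses $H_{u(-t,-\omega)}$, hence also every larger half-space $H_{u(-s,-\omega)}$ with $s>t$. So it agrees with $\sigma_a$ on $W_{\gamma(s)}$ for all $s\in(t,T)$. Likewise, agreement with $\sigma_{x_0}$ at $t$ propagates to all $s<t$. Let
\[
A=\{t\in(0,T):\tau \text{ agrees with } \sigma_a \text{ on } W_{\gamma(t)}\}.
\]
By the previous paragraph $A$ is upward closed in $(0,T)$, and its complement, where $\tau$ agrees with $\sigma_{x_0}$, is downward closed. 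If $A=(0,T)$ we are in case 2, and if $A=\varnothing$ we are in case 3. Otherwise put $\theta=\inf A\in(0,T)$. Then $\tau$ agrees with $\sigma_a$ on $W_x$ for $x\in(\theta,x_0)$ and with $\sigma_{x_0}$ for $x\in[a,\theta)$, which is case 1 with the orientation exactly as stated. At $x=\theta$ itself either alternative may occur, which is why the statement leaves $\theta$ out of both ranges.

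The main obstacle I expect is bookkeeping at the endpoints of the ranges in case 1 rather than the monotonicity itself. At $x=a=o$ the wall $W_a$ passes through $a$, so $\sigma_a$ abstains there, and similarly $\sigma_{x_0}$ abstains on $W_{x_0}$. I would handle these by the same nesting: $H_{u(0,-\omega)}\subseteq H_{u(-s,-\omega)}$, so the choice of $\tau$ at $t=0$ (resp. $t=T$) is forced by, or compatible with, the neighbouring walls. I would read ``coincides'' at those two points as agreement with the non-abstaining point section. I would also check that the case analysis needs no measure-zero exceptions, since all choices here are pointwise, not almost-everywhere. This is the property that later yields the almost-oddness referred to in Proposition~\ref{prop:section oddness}.
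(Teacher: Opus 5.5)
Your strategy is the standard one and it works in outline. After normalizing, the walls orthogonal to $[a,x_0]$ are nested, admissibility makes $A$ upward closed, and $\theta=\inf A$ gives the trichotomy. The paper itself gives no argument here; it only points to Lemma 5.3 of \cite{WALL}.

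\textbf{The nesting step.} This is the one step that carries the geometry, and it does not follow as you wrote it. You note that $\rho(t)=-\arcsin(\tanh t)$ is strictly decreasing. You then apply the quoted criterion $H_{u(\rho,\eta)}\subseteq H_{u(\rho',\eta')}\iff\rho'-\rho\ge d_{S^{n-1}}(\eta,\eta')$ with $\rho=\rho(t)$, $\rho'=\rho(s)$ and $t<s$. But then $\rho'-\rho<0$, so that criterion gives the \emph{reverse} inclusion $H_{u(-s,-\omega)}\subseteq H_{u(-t,-\omega)}$. That would make agreement with $\sigma_a$ propagate toward $a$ and flip the orientation of case (1).

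Your inclusion is in fact the true one; the modified criterion in the paper has a sign slip. It is derived from $v_0-u_0\ge 0$, whereas Proposition~\ref{prop:Half-Space Inclusion} requires $u_0-v_0\ge 0$. The correct statement is $H_{u(\rho,\omega)}\subseteq H_{u(\rho',\eta)}\iff\rho-\rho'\ge d_{S^{n-1}}(\omega,\eta)$. You can see this from $H_{u(\rho,\omega)}=\{y\in\mathbb{I}^n:\ y'\cdot\omega>y_0\sin\rho\}$, or from $\sigma_o=\{r<0\}$.

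So replace that sentence by a direct check. For $u=u(-t,-\omega)$ and $v=u(-s,-\omega)$ with $t\le s$, one has $u_0-v_0=\sinh s-\sinh t\ge 0$ and $\langle u,v\rangle=\cosh(s-t)\ge 1$. Proposition~\ref{prop:Half-Space Inclusion} then gives $H_u\subseteq H_v$. Equivalently, $H_{u(-t,-\omega)}=\{y:\ y'\cdot\omega<y_0\tanh t\}$, which visibly grows with $t$. The same computation gives $H_{u(s,\omega)}\subseteq H_{u(t,\omega)}$.

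\textbf{Endpoints.} Your main paragraph only controls $t\in(0,T)$. So it yields case (1) on $(\theta,x_0)$ and $(a,\theta)$, while the statement includes $x_0$ and $a$. The abstention issue you anticipate does not arise. The statement asks about $\sigma_a$ on $W_{x_0}$ and $\sigma_{x_0}$ on $W_a$, never about a point section on a wall through its own point. Both endpoint claims follow in one line from nesting:
\begin{itemize}
\item take any $t\in A$ and use $H_{u(-t,-\omega)}\subseteq H_{u(-T,-\omega)}$;
\item take any $t\in(0,T)\setminus A$ and use $H_{u(t,\omega)}\subseteq H_{u(0,\omega)}$.
\end{itemize}

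\textbf{Oddness.} Your dichotomy ``exactly one alternative holds on each $W_x$'' presupposes that $\tau$ picks exactly one of $H_u,H_{-u}$, i.e.\ exact oddness. Your closing claim that this yields the almost-oddness of Proposition~\ref{prop:section oddness} is therefore circular. If walls are taken as the partitions $\{H_u,H_u^c\}$, you must also allow $\tau$ to pick both closed sides of $W_{\gamma(t)}$. The same nesting handles this: $\{y'\cdot\omega\le y_0\tanh t\}\subseteq H_{u(-s,-\omega)}$ for $s>t$, and symmetrically on the other side. So this can only happen at $t=\theta$, and that is where almost-oddness actually comes from.
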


See Lemma 5.3 of ~\cite{WALL}.
Fix a based point $x_0$ above, then for every fixed direction
$
\omega\in S^{n-1},
$ there is a oriented geodesic line going through $x_0$. And given an admissible section $\tau$,
there exists a unique threshold value
\[
b_\tau(\omega)\in\mathbb R\cup\{\pm\infty\}
\] divides the choice. $+\infty$ means case (2) for any $a$ from $x_0$ in $\omega$ direction. $-\infty$ means case (2) for any $a$ from $x_0$ in $-\omega$ direction. Positive and finite means $\theta$ from $x_0$ in $\omega$ direction and $b_r(\omega)$ distance for far $a$ in $\omega$ direction. Negative and finite means $\theta$ from $x_0$ in $-\omega$ direction and $-b_r(\omega)$ distance for far $a$ in $-\omega$ direction. Zero when (3) for $a$ in $\omega$ and $-\omega$ directions.

However, for $\mathcal M(\mathbb{I}^n)$, $\pm\infty$ is impossible, otherwise the section will be at an infinite distance from the point section, see prop \ref{prop:large section change} with $a\to-\infty$.

Thus every admissible section in $\mathcal M(\mathbb{I}^n)$ determines a \textbf{border function}
\[
b_\tau:S^{n-1}\to\mathbb R.
\]

Note that a border function replies on the admissible section and the choice of base point. Since
$
-u(t,\omega)=u(-t,-\omega),
$
and that a section chooses exactly one orientation of every wall, we get oddness for border function
\[
b_\tau(-\omega)
=
-b_\tau(\omega).
\]

For example, for the base point section
$
\sigma_o
=
\{u\in\mathrm{dS}^n:
\langle o,u\rangle>0
\},
$
we simply have
$
b_o(\omega)=0.
$

\subsection{Admissible Sections and the $1$-Lipschitz Condition}
The funtion $b$ cannot be an arbitrary odd function. We will see that it is 1-Lipschitz.
Fix the base point $o$ in \(\mathbb{I}^n\). Let
$
b:S^{n-1}\to\mathbb R
$
be a border function decided by an admissible section $\sigma_b$, and it parameterizes half-spaces which $\sigma_b$ chooses into
$
\sigma_b
=
\{u(r,\omega):r>b(\omega)\,,\omega\in S^{n-1}\},
$

We recall the notation in the modified inclusion condition for half-spaces. Let
$
\beta(\omega)
=
\arcsin(\tanh b(\omega)),
$
and
$
\rho
=
\arcsin(\tanh r)\in (-\frac\pi2,\frac\pi2).
$
Then
$
\sigma_b
=
\{u(\rho,\omega):\rho>\beta(\omega)\}.
$

The section \(\sigma_b\) is admissible if and only if it is upward closed:
whenever
$
u(\rho,\omega)\in \sigma_b
$
and
$
H_{u(\rho,\omega)}
\subseteq
H_{u(\rho',\eta)},
$
then
$
u(\rho',\eta)\in \sigma_b.
$

Using the modified inclusion condition for half-spaces, this becomes:

Ff
$
\rho>\beta(\omega)
$
and
$
\rho'-\rho
\ge
d_{S^{n-1}}(\omega,\eta),
$
then
$
\rho'>\beta(\eta).
$

This is equivalent to
$
\beta(\eta)-\beta(\omega)
\le
d_{S^{n-1}}(\omega,\eta)
$
for all
$
\omega,\eta\in S^{n-1}.
$

Exchanging \(\omega\) and \(\eta\) gives
$
\beta(\omega)-\beta(\eta)
\le
d_{S^{n-1}}(\omega,\eta).
$

So
$
|\beta(\omega)-\beta(\eta)|
\le
d_{S^{n-1}}(\omega,\eta).
$

Finally, since $b$ is an odd function, we have that $\beta$ is also an odd function.

Vice versa. Given a such odd and 1-Lipschitz function, we can see that the inclusion relation of half-spaces holds. Note that an admissible section can change its choice on the zero-measure set $u(b(\omega),\omega)$ and keep admissible.

\begin{thm}
Suppose $b:S^{n-1}\to\mathbb{R}\cup\{\pm\infty\}$ is an odd function, and $\sigma_b$ is the corresponding section, then we have the two conditions equivalent:\\
(1) $\sigma_b$  is admissible.\\
(2) $\beta=\arcsin(\tanh b)$  is $1$-Lipschitz and odd, and $b\ne \pm\infty\ $
\end{thm}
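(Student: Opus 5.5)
We prove the two implications separately, working throughout in the $(\rho,\omega)$ coordinates of the modified inclusion condition. Extend $\arcsin\circ\tanh$ by $\pm\infty\mapsto\pm\pi/2$, so that $\beta$ takes values in $[-\pi/2,\pi/2]$. Then $\sigma_b=\{u(\rho,\omega):\rho>\beta(\omega)\}$ with $\rho\in(-\pi/2,\pi/2)$. The key tool is the Proposition giving $H_{u(\rho,\omega)}\subseteq H_{u(\rho',\eta)}$ if and only if $\rho'-\rho\ge d_{S^{n-1}}(\omega,\eta)$.

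\emph{(2)$\Rightarrow$(1).} Assume $\beta$ is odd and $1$-Lipschitz. Take $u(\rho,\omega)\in\sigma_b$ and $H_{u(\rho,\omega)}\subseteq H_{u(\rho',\eta)}$. Then
\[
\rho'\ge\rho+d(\omega,\eta)>\beta(\omega)+d(\omega,\eta)\ge\beta(\eta),
\]
so $u(\rho',\eta)\in\sigma_b$ and $\sigma_b$ is upward closed. We must also check that $\sigma_b$ really is a section, i.e.\ that it picks exactly one of $u$ and $-u=u(-\rho,-\omega)$ off a null set. This follows from oddness: $\rho>\beta(\omega)$ if and only if $-\rho<\beta(-\omega)$. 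The two half-spaces can therefore only both fail, or both hold, on the graph $\{\rho=\beta(\omega)\}$. That graph has $\mu_{\mathrm{dS}^n}$-measure zero by Fubini, since it is a single value of $r$ for each $\omega$.

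\emph{(1)$\Rightarrow$(2).} Assume $\sigma_b$ is admissible. Oddness of $\beta$ is immediate from oddness of $b$, since $\arcsin\circ\tanh$ is odd. For the Lipschitz bound, fix $\omega,\eta$ with $\alpha=d(\omega,\eta)$ and suppose, for contradiction, that $\beta(\eta)>\beta(\omega)+\alpha$. Choose $\rho$ slightly above $\beta(\omega)$ and set $\rho'=\rho+\alpha<\beta(\eta)$. This requires $\beta(\omega)<\pi/2$ and keeps $\rho'<\pi/2$; both hold because $\beta(\eta)\le\pi/2$. Then $u(\rho,\omega)\in\sigma_b$, the inclusion Proposition gives $H_{u(\rho,\omega)}\subseteq H_{u(\rho',\eta)}$, but $u(\rho',\eta)\notin\sigma_b$, contradicting admissibility. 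Hence $\beta(\eta)-\beta(\omega)\le d(\omega,\eta)$, and swapping $\omega,\eta$ gives the $1$-Lipschitz property.

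\emph{Excluding $b=\pm\infty$.} This is the step I expect to be the main obstacle. Infinite values are not ruled out by admissibility alone: a constant $\beta\equiv\pi/2$ is $1$-Lipschitz, although it is not odd. Combining oddness with the Lipschitz bound, if $b(\omega_0)=+\infty$ then $b(-\omega_0)=-\infty$. Since $d(\omega_0,-\omega_0)=\pi$, the Lipschitz inequality $|\beta(\omega_0)-\beta(-\omega_0)|\le\pi$ is then forced to be sharp. Sharpness pins $\beta$ along every great semicircle from $-\omega_0$ to $\omega_0$, so $\beta(\eta)=\pi/2-d(\omega_0,\eta)$ for all $\eta$. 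So the honest contradiction must come from the standing hypothesis $\tau\in\mathcal M(\mathbb I^n)$, i.e.\ finite distance to $\sigma_o$.

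To get it, I would note that $\beta(\eta)=\pi/2-d(\omega_0,\eta)$ makes $\sigma_b$ contain, for every $a<0$, the forced region produced by reversing $H_{u_a}$ along the direction $\omega_0$ (after a rotation taking $e_1$ to $\omega_0$). By Proposition~\ref{prop:large section change}, as $a\to-\infty$ that region has measure $\frac{\operatorname{Vol}(S^{n-2})}{n-1}(-a)+O(1)\to\infty$. This gives $\mu(\sigma_b\triangle\sigma_o)=\infty$, exactly as in the remark after Lemma~\ref{lem:border}. Hence $b$ is finite-valued, which completes (2).
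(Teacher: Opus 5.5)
Your proposal is correct and follows the paper's own argument: upward-closedness of $\sigma_b=\{u(\rho,\omega):\rho>\beta(\omega)\}$ is translated through the modified inclusion condition into $\beta(\eta)-\beta(\omega)\le d_{S^{n-1}}(\omega,\eta)$, and infinite values are excluded, as in the paper's remark after the border-function lemma, by the finite-distance requirement of $\mathcal M(\mathbb{I}^n)$ together with the linear growth in the large-$a$ forced-region proposition; you also supply the contradiction argument and the check that $\sigma_b$ picks exactly one of $\pm u$ off a null set, which the paper only asserts. You are right that admissibility alone does not exclude $b=\pm\infty$, and your rigidity step forcing $\beta=\pi/2-d_{S^{n-1}}(\omega_0,\cdot)$ is a harmless detour: $b(\omega_0)=\pm\infty$ already means the section takes the far side of every wall orthogonal to the ray in direction $\pm\omega_0$ (which sign depends on the paper's somewhat inconsistent orientation conventions), and that is all the large-section-change estimate needs.
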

We call $\beta$ a modified border function. So every such modified finite $1$-Lipschitz function corresponds to an admissible section.
\begin{cor}
The modified border function with base point $o$ of an admissible section in $\mathcal M(\mathbb{I}^n)$ is 1-Lipschitz, up to the choice of the section on a zero-measure set.
\end{cor}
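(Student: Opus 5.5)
The plan is to show that every $\tau\in\mathcal M(\mathbb{I}^n)$ has, up to a null set, a finite odd border function $b_\tau$ with respect to $o$, and that the modified function $\beta_\tau=\arcsin(\tanh b_\tau)$ is $1$-Lipschitz for the spherical metric.

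First, fix $\tau\in\mathcal M(\mathbb{I}^n)$ and use Lemma \ref{lem:border} along each geodesic ray $\gamma_\omega$ from $o$. This gives a threshold $b_\tau(\omega)\in\mathbb R\cup\{\pm\infty\}$ such that, for $u(r,\omega)$ with $r\neq b_\tau(\omega)$, the section $\tau$ chooses $u(r,\omega)$ exactly when $r>b_\tau(\omega)$. The only walls where this may fail are those with $r=b_\tau(\omega)$, which form a graph over $S^{n-1}$ and so have $\mu_{\mathrm{dS}^n}$-measure zero. I would modify $\tau$ on this graph by declaring $\tau=\sigma_{b_\tau}$ there; this is exactly the ``up to the choice on a zero-measure set'' clause.

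Second, exclude $b_\tau(\omega)=\pm\infty$. Suppose $b_\tau(\omega_0)=-\infty$. Then $\tau$ chooses every $u(r,\omega_0)$, in particular $u_a$ after a rotation, for all $a<0$. By admissibility $\tau$ must then contain the forced set $-D_a$ of the preceding subsection for every $a$. Proposition \ref{prop:large section change} gives $\mu(D_a)\to\infty$, so $d(\tau,\sigma_o)=\infty$, contradicting $\tau\in\mathcal M(\mathbb{I}^n)$. The case $+\infty$ is symmetric by oddness. Oddness itself follows from $-u(r,\omega)=u(-r,-\omega)$ and from the fact that a section picks exactly one of $u,-u$ (Proposition \ref{prop:section oddness}).

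Third, with $b_\tau$ finite and odd, apply the preceding theorem: $\sigma_{b_\tau}$ is admissible if and only if $\beta_\tau$ is $1$-Lipschitz. The direction needed is that upward closure plus the modified inclusion condition $H_{u(\rho,\omega)}\subseteq H_{u(\rho',\eta)}\iff \rho'-\rho\ge d_{S^{n-1}}(\omega,\eta)$ gives $\beta(\eta)-\beta(\omega)\le d(\omega,\eta)$. Concretely, take $\rho\downarrow\beta(\omega)$ and $\rho'=\rho+d(\omega,\eta)$.

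The main obstacle is that the modified section $\sigma_{b_\tau}$ differs from $\tau$ on a null set. Admissibility is a pointwise order condition, so it could fail on that set, and I must check that the inference above only uses $\rho$ strictly above $\beta(\omega)$. I would handle this with strict inequalities and a limiting argument. If $\beta(\eta)>\beta(\omega)+d(\omega,\eta)$, choose $\rho$ with $\beta(\omega)<\rho<\beta(\eta)-d(\omega,\eta)$. Then $u(\rho,\omega)$ is chosen by $\tau$, since it lies off the graph. Its image $u(\rho+d,\eta)$ is a half-space containing $H_{u(\rho,\omega)}$, yet it lies strictly below the threshold $\beta(\eta)$ and so is not chosen, contradicting the admissibility of $\tau$. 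This uses $\tau$ only off the graph, so the null-set ambiguity does no harm. Exchanging $\omega$ and $\eta$ gives the $1$-Lipschitz bound.
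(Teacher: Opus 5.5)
Your proposal is correct and follows essentially the paper's route. You get a threshold on each ray from Lemma \ref{lem:border}, exclude $\pm\infty$ via Proposition \ref{prop:large section change}, get oddness from $-u(r,\omega)=u(-r,-\omega)$, and derive the $1$-Lipschitz bound from upward closure together with the modified inclusion condition. Your explicit choice of $\rho$ strictly between $\beta(\omega)$ and $\beta(\eta)-d_{S^{n-1}}(\omega,\eta)$ spells out the converse that the paper compresses into the word ``equivalent'', and it shows that modifying $\tau$ on the graph $\{r=b_\tau(\omega)\}$ does no harm.
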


\subsection{Independence of the Border Function of the base point}
We can change the base point for the border function, fixing a given admissible section $\tau$. This is easier to see in open ball model. For any hyperbolic point $x$, and any ideal point $X$, there is a unique geodesic line passing through x and tends to $X$. This geodesic line depends on $X$ continuously, and parameterize the direction from $x$ as a point of $S^{n-1}$. We consider the border function $b_x$, and take $y(\omega)$ as the point we get from walking from $x$, along the geodesic line $l(\omega)$ corresponding to $\omega \in S^{n-1}$, and signed distance $b_x(\omega)$. We have $y(\omega) = y(-\omega)$ and $l(\omega) = l(-\omega)$, and note the unique wall orthogonal to $l(\omega)$ at $y(\omega)$ as $w(\omega)$ and we have the border wall set 
\[
W_{x,\tau}^{border}=\{w(\omega)\  |\ \omega\in S^{n-1}  \}. 
\]
\begin{figure}[htbp]
  \centering
  \includegraphics[width=0.6\textwidth]{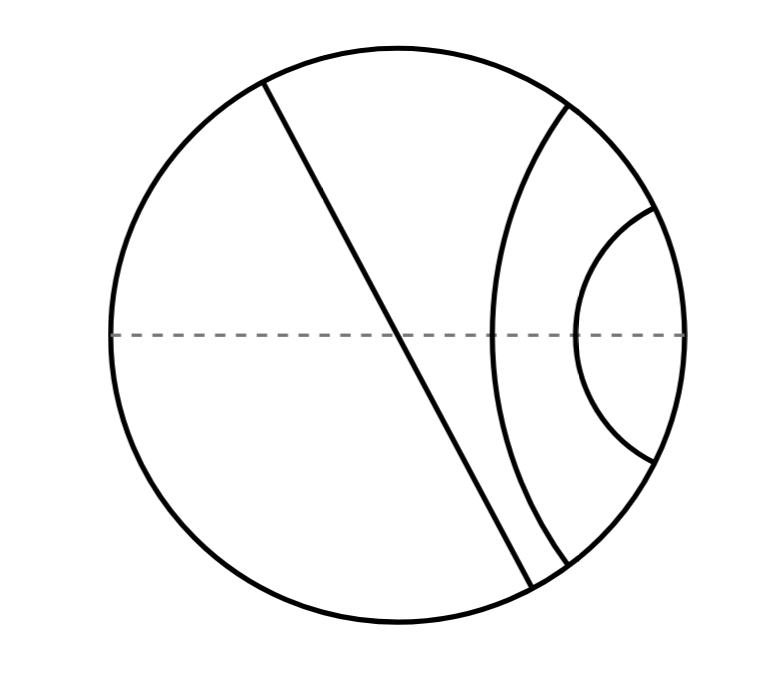}
  \caption{Intersecting one more wall}
  \label{Intersecting one more wall}
\end{figure}
The border wall set is independent of the base point. This is easy to see in the open ball model. Suppose we have another base point $x'$, and corresponding $b_{x'}$, $y'$, $l'$, and $w'$. Take a wall $w(\omega)$, and note the direction in which the geodesic line passing through $x'$ and orthogonal to the wall is $\omega'$. First we consider the geodesic line $l'(\omega')$passing through $x'$. If the chosen half-space of $w(\omega)$ is smaller than that of $w'(\omega')$, then we can intersect a wall orthogonal to $l(\omega)$ between $w(\omega)$ and $w'(\omega')$, and the chooice on the new wall will be conflict with $b_x(\omega)$. And vice versa.

Moreover, we have \textbf{lemma \ref{lem:section distance and border function}}
\[d(\tau,\sigma_x)=\mu_{\mathrm{dS}^n}(W^{border}_{\tau}\triangle \sigma_x)=\int_{S^{n-1}}
\int_0^{|b_x(\omega)|}
\cosh^{n-1}r
\,dr\,d\omega\]
where the $\sigma_x$ also notes the half-spaces containing $x$ and it depends on $x$ continuously. As a result, $d(\tau,\sigma_x)$ is a continuous function of $x$.
Moreover, by isometric actions, we get
\begin{cor}
The modified border function for any base point is $1$-Lipschitz.
\end{cor}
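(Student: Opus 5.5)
The plan is to reduce to the base point $o=(1,0,\dots,0)$, where the previous corollary already gives the result, by moving $x$ to $o$ with an isometry. Fix a base point $x\in\mathbb{I}^n$ and an admissible section $\tau\in\mathcal M(\mathbb{I}^n)$. Choose $g\in SO(n,1)$, preserving $\mathbb{I}^n$, with $g x=o$. Since $g$ preserves $\langle\cdot,\cdot\rangle$, it acts on $\mathrm{dS}^n$ by $u\mapsto gu$, and $\langle gx',gu\rangle=\langle x',u\rangle$ gives $g H_u=H_{gu}$. So $g$ preserves the inclusion relation of half-spaces from Proposition \ref{prop:Half-Space Inclusion}, since that condition $u_0-v_0\ge0$, $\langle u,v\rangle\ge1$ is equivalent to the Lorentz-invariant condition $u-v\in\overline{\mathcal C}^{+}$ for orthochronous $g$. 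Hence $g\tau=\{gu:u\in\tau\}$ is again upward closed, i.e.\ admissible. Because $\mu_{\mathrm{dS}^n}$ is $SO(n,1)$-invariant and $g\sigma_{y}=\sigma_{gy}$, the section $g\tau$ is still at finite distance from point sections, so $g\tau\in\mathcal M(\mathbb{I}^n)$.

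Next I will check that border functions are transported correctly. The direction space at $x$ is identified with $S^{n-1}$ via the unit tangent sphere $T^1_x\mathbb{I}^n$, and $dg_x:T^1_x\to T^1_o=S^{n-1}$ is an isometry of round spheres. Along the geodesic $\gamma$ from $x$ in direction $\omega$, the wall orthogonal to $\gamma$ at signed distance $t$ maps under $g$ to the wall orthogonal to $g\gamma$ at signed distance $t$ from $o$, in direction $\omega'=dg_x(\omega)$. In the notation of the preceding proposition this wall is $u(t,\omega')$. The threshold of Lemma \ref{lem:border} is defined purely by which side of these walls the section chooses, so it is carried over unchanged, and I get
\[
b_{x,\tau}(\omega)=b_{o,g\tau}\bigl(dg_x(\omega)\bigr).
\]
Equivalently, this follows from the base-point independence of the border wall set $W^{border}_\tau$ shown just above.

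Composing with $\arcsin\circ\tanh$, the modified border functions satisfy $\beta_{x,\tau}=\beta_{o,g\tau}\circ dg_x$. By the previous corollary, $\beta_{o,g\tau}$ is $1$-Lipschitz up to a null modification. Since $dg_x$ is an isometry for the spherical metric, $\beta_{x,\tau}$ is $1$-Lipschitz. Oddness also transfers, because $dg_x(-\omega)=-dg_x(\omega)$.

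The main obstacle is the transport step. There I must verify carefully that the orientation convention, namely that $u(t,\omega)$ is the positive half-space and that $b$ is a signed distance, is respected by $g$. This amounts to checking that $g$ maps $\gamma_\omega'(t)=u(t,\omega)$ to $(g\gamma)'(t)=u(t,\omega')$ in the $o$-parametrization. The check follows from the differentiated identity $g\gamma_\omega(t)=\gamma_{\omega'}(t)$, which holds because $g$ maps geodesics through $x$ to geodesics through $o$ at unit speed.
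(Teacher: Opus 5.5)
Your proposal is correct and is essentially the paper's argument. The paper proves this corollary in one line, ``by isometric actions,'' meaning it moves the base point to $o$ with an element of $SO(n,1)$ and applies the preceding corollary at $o$; you supply the details it leaves implicit, namely that $g\tau$ is again admissible and lies in $\mathcal M(\mathbb{I}^n)$, that $b_{x,\tau}=b_{o,g\tau}\circ dg_x$ with the orientation convention $u(t,\omega)=\gamma_\omega'(t)$ preserved, and that the direction sphere at $x$ carries the round metric of $T^1_x$, so that $dg_x$ is an isometry.
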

Then \textbf{theorem \ref{thm: border correspondence lip}} is obvious.

We give an example here. See section 2.4. For base point $o=(1,0,\cdots,0)$ and $x_t=(\cosh t,\sinh t,0,\cdots,0), t>0$, the border function \[b_{\sigma_{x_t}}(\omega)=\operatorname{arctanh}(\omega_1\tanh t )\]

\subsection{Distance to a Fixed Admissible Section Along a Geodesic}

Let
$
\tau\in\mathcal M(\mathbb{I}^n)
$
be a fixed admissible section, and let
$
o\in \mathbb{I}^n
$
be a base point. Suppose
$
C
=
d_{\mathcal M}(\tau,\sigma_o)
<
\infty.
$
Let
$
\gamma:[0,\infty)\to \mathbb{I}^n
$
be a geodesic ray starting at \(o\), and write
$
p_R=\gamma(R).
$
Then
$
d_{\mathbb{I}^n}(o,p_R)=R.
$

Since the embedding
$
\mathbb{I}^n\hookrightarrow\mathcal M(\mathbb{I}^n)
$
is isometric,
$
d_{\mathcal M}(\sigma_o,\sigma_{p_R})
=
R.
$

Applying the reverse triangle inequality gives
\[
d_{\mathcal M}(\tau,\sigma_{p_R})
\ge
d_{\mathcal M}(\sigma_o,\sigma_{p_R})
-
d_{\mathcal M}(\tau,\sigma_o).
\]

So
$
d_{\mathcal M}(\tau,\sigma_{p_R})
\ge
R-C
$.
Therefore
$
d_{\mathcal M}(\tau,\sigma_{p_R})
\longrightarrow
\infty
\quad
(R\to\infty).
$

Plus, $min_\gamma\{d_{\mathcal M}(\tau,\sigma_{p_R})\}\ge R-C$ the lower bound depends only on $R$.

\subsection{Minimizing Point Sections and the Locally Minimum Condition}
We study what will happen when the base point is to make the distance locally minimum. We suppose $n\ge2$.

Fix
$
\tau\in \mathcal M(\mathbb{I}^n)
$
, and define
$
D(p)
=
d_{\mathcal M}(\tau,\sigma_p),
\ 
p\in \mathbb{I}^n.
$ We proved that $D$ is a continuous function on $\mathbb{I}^n$ and tends to infinity outside a big enough circle centered at a base point, so \(D\) attains a minimum at some point
$
p_0\in \mathbb{I}^n.
$

Choose de Sitter coordinates centered at \(p_0\):
\[
u(r,\omega)
=
(\sinh r,\cosh r\,\omega),
\qquad
\omega\in S^{n-1}.
\]

This is possible because the isometric action on hyperbolic space also keeps orthogonal relation, so that $D$ is invariant under the action of isometric action.

Let
$
b=b_{p_0}.
$ Notice that we can assume $b\ne 0\ a.e.$ to make the integration only change a bit, just if it satisfies the 1-Lipschitz condition for $\beta$. The discussion section will provide details about it. Briefly speaking, we divide an open disk where $\beta$ is zero into $2^n$ parts, and adjust $\beta$ on it.

The point section \(\sigma_{p_0}\) corresponds to the case
$
r=0.
$ 
Therefore
\[
D(p_0)
=
\int_{S^{n-1}}
\int_0^{|b(\omega)|}
\cosh^{n-1}r\,dr
\,d\omega
=
\int_{S^{n-1}}
\int_0^{|\operatorname{arctanh} \sin \beta(\omega)|}
\cosh^{n-1}r\,dr
\,d\omega
\]

Now move the base point from \(p_0\) in a tangent direction
$
\xi\in S^{n-1}.
$
\begin{lem}
\label{lem:border function base point moved}
Let \(o=(1,0,\dots,0)\in \mathbb{I}^n\),and \(\xi\in S^{n-1}\), and move the base point a distance \(s\) in direction \(\xi\):
\[
p_s=(\cosh s,\sinh s\,\xi).
\]
Let \(b_s\) be the border function of the same section \(\tau\) with respect to the base point \(p_s\), using the frame obtained by parallel transport from \(o\) to \(p_s\). If \(b\) is differentiable at \(\omega\)(by 1-Lipschitz, we know this holds almost everywhere), then
\[
b_s(\omega)
=
b(\omega)
-
s(\xi\cdot\omega)
+
s\tanh b(\omega)
\left\langle
\nabla_S b(\omega),
\xi
\right\rangle
+
o(s).
\]
\end{lem}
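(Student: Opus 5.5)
The plan is to reduce the statement to the action of a single hyperbolic isometry on the border wall set $W^{border}_{\tau}$, which we showed above is independent of the base point.

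\textbf{Setting up the boost.} Let $g_s\in SO(n,1)$ be the boost along $\xi$ with $g_s o=p_s$:
\[
g_s(x_0,x')=\bigl(\cosh s\,x_0+\sinh s\,(\xi\cdot x'),\ x'+(\cosh s-1)(\xi\cdot x')\xi+\sinh s\,x_0\,\xi\bigr).
\]
Its differential at $o$ is exactly parallel transport along the geodesic from $o$ to $p_s$. Consequently, the geodesic from $p_s$ in the transported direction $\omega$ is $g_s\gamma_\omega$. By the Proposition on the hyperplane orthogonal to $\gamma_\omega$, the wall orthogonal to it at signed distance $r$ is bounded by $g_s u(r,\omega)$. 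So $b_s(\omega)$ is characterized as the unique $r$ such that $g_s u(r,\omega)$ lies in $W^{border}_{\tau}$. Here I use the oddness of border functions and $u(-r,-\omega)=-u(r,\omega)$ to fix the sign convention consistently.

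By the definition of $b=b_o$, every border wall has the form $u(b(\eta),\eta)$ for some $\eta\in S^{n-1}$. Hence
\[
u(b_s(\omega),\omega)=g_{-s}\,u(b(\eta),\eta)
\]
for a unique $\eta=\eta_s(\omega)$. Uniqueness holds because exactly one border wall is orthogonal to each geodesic through a point; this is the content of Lemma~\ref{lem:border} together with the independence of $W^{border}_\tau$.

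\textbf{First-order computation.} To first order, $g_{-s}(v_0,v')=(v_0-s\,\xi\cdot v',\ v'-s\,v_0\,\xi)+O(s^2)$. Applying this to $v=(\sinh b(\eta),\cosh b(\eta)\,\eta)$ gives time coordinate $\sinh b(\eta)-s\cosh b(\eta)(\xi\cdot\eta)$. Comparing with $\sinh b_s(\omega)$, and using $d\sinh r=\cosh r\,dr$, yields
\[
b_s(\omega)=b(\eta)-s\,(\xi\cdot\eta)+O(s^2).
\]
The spatial part is $\cosh b(\eta)\bigl(\eta-s\tanh b(\eta)\,\xi\bigr)$. Normalizing it gives
\[
\omega=\eta-s\tanh b(\eta)\bigl(\xi-(\xi\cdot\eta)\eta\bigr)+O(s^2).
\]
The map $\eta\mapsto\omega$ is a smooth perturbation of the identity on $S^{n-1}$, since $b$ is bounded (it is continuous on a compact set, because $\beta$ is $1$-Lipschitz). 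Inverting it, and using the Lipschitz bound on $b$ to replace $\tanh b(\eta)$ by $\tanh b(\omega)$ at cost $O(s)$, gives
\[
\eta=\omega+s\tanh b(\omega)\,\xi^{T}+O(s^2),
\]
where $\xi^{T}=\xi-(\xi\cdot\omega)\omega$ is the tangential projection.

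\textbf{Conclusion.} Since $b$ is differentiable at $\omega$ and $\eta-\omega=O(s)$,
\[
b(\eta)=b(\omega)+s\tanh b(\omega)\langle\nabla_S b(\omega),\xi^T\rangle+o(s),
\]
and $\langle\nabla_S b,\xi^T\rangle=\langle\nabla_S b,\xi\rangle$ because the gradient is tangential. Also $\xi\cdot\eta=\xi\cdot\omega+O(s)$. Substituting both into the expression for $b_s(\omega)$ gives the claimed formula.

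\textbf{Main obstacle.} The computation itself is routine; the delicate step is justifying the characterization of $b_s$. One must check that the frame at $p_s$ obtained by parallel transport coincides with $dg_s$, which holds because $g_s$ is a transvection along the geodesic. One must also check that the border wall in direction $\omega$ from $p_s$ is indeed the $g_s$-image of a border wall seen from $o$, with the correct orientation. The orientation issue is the one I would verify first: at $s=0$ we have $\eta=\omega$, and continuity in $s$ rules out $\eta$ jumping to $-\omega$.
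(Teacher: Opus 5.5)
Your proposal is correct and follows essentially the paper's argument: you use the same boost $g_s$, the same identification of $g_s u(b_s(\omega),\omega)$ with a border wall $u(b(\eta),\eta)$, and the same first-order expansion, finished by differentiability of $b$ at $\omega$ and the Lipschitz bound. The only difference is direction: you apply $g_{-s}$ on the $o$-side and then rearrange for $\eta$, whereas the paper applies $g_s$ and obtains $\eta$ in terms of $\omega$ directly, at the cost of invoking $b_s(\omega)=b(\omega)+O(s)$. (Your map $\eta\mapsto\omega$ is only Lipschitz, not smooth, but this does no harm, since $\eta$ is already given by the border-wall correspondence and only a pointwise rearrangement is needed, not a global inversion.)
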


\begin{proof}
Let \(g_s\in SO(n,1)\) be the hyperbolic translation sending \(o\) to
\[
p_s=(\cosh s,\sinh s\,\xi).
\]
For \(z=(z_0,\mathbf{z})\in \mathbb R^{n+1}\), write
$
\mathbf{z}
=
z_\perp+(\mathbf{z}\cdot\xi)\xi,
\ 
z_\perp\perp \xi.
$
Then
\[
g_s(z_0,\mathbf{z})
=
\left(
\cosh s\,z_0+\sinh s\,(\mathbf{z}\cdot\xi),
\;
z_\perp+
(\sinh s\,z_0+\cosh s\,(\mathbf{z}\cdot\xi))\xi
\right).
\]

The de Sitter parametrization centered at \(p_s\) is therefore
\[
u_s(\rho,\omega)
=
g_su(\rho,\omega).
\]
Let
$
\alpha=\xi\cdot\omega.
$
Since
$
u(\rho,\omega)=(\sinh\rho,\cosh\rho\,\omega),
$
we obtain
\[
u_s(\rho,\omega)_0
=
\cosh s\,\sinh\rho
+
\sinh s\,\cosh\rho\,\alpha,
\]
and noting sp(space) the last $n$ coordinates
\[
u_s(\rho,\omega)_{\mathrm{sp}}
=
\cosh\rho(\omega-\alpha\xi)
+
\left(
\sinh s\,\sinh\rho
+
\cosh s\,\cosh\rho\,\alpha
\right)\xi.
\]

Now rewrite the same open half-space in the original coordinates centered at \(o\):
\[
u_s(\rho,\omega)=u(r,\eta)
=
(\sinh r,\cosh r\,\eta).
\]
We have
$
\sinh r
=
\cosh s\,\sinh\rho
+
\sinh s\,\cosh\rho\,\alpha.
$
By
$
\cosh s=1+O(s^2),
\ 
\sinh s=s+O(s^3),
$
we have
$
\sinh r
=
\sinh\rho+s\cosh\rho\,\alpha+O(s^2).
$
So
$
r=\rho+s\alpha+O(s^2).
$
Therefore
$
r
=
\rho+s(\xi\cdot\omega)+O(s^2).
$

Next, compare the coordinates. We have
$
\cosh r\,\eta
=
\cosh\rho\,\omega
+
s\sinh\rho\,\xi
+
O(s^2).
$

Since
$
r=\rho+s\alpha+O(s^2),
$
we have
$
\cosh r
=
\cosh\rho+s\alpha\sinh\rho+O(s^2).
$
Thus
\[
\eta
=
\frac{
\cosh\rho\,\omega+s\sinh\rho\,\xi+O(s^2)
}{
\cosh\rho+s\alpha\sinh\rho+O(s^2)
}
=
\frac{
\omega+s\tanh\rho\,\xi+O(s^2)
}{
1+s\alpha\tanh\rho+O(s^2)
}.
\]
Therefore
\[
\eta
=
\omega
+
s\tanh\rho(\xi-\alpha\omega)
+
O(s^2)
=
\omega
+
s\tanh\rho
\bigl(\xi-(\xi\cdot\omega)\omega\bigr)
+
O(s^2)
\]

Now let
$
\rho=b_s(\omega).
$
By definition, the hyperplane bounding by
$
u_s(b_s(\omega),\omega)
$
lies on the fixed border of \(\tau\). Written in the old coordinates, this means
$
r=b(\eta)
$.
Using the expansions above, this becomes
\[
b_s(\omega)+s(\xi\cdot\omega)
=
b\left(
\eta
\right)+O(s^2)
\]

Since
$
b_s(\omega)=b(\omega)+O(s),
$
we have 
\[
\exp_\omega{(s\tanh b(\omega)
\bigl(\xi-(\xi\cdot\omega)\omega\bigr))}=\omega+s\tanh b(\omega)
\bigl(\xi-(\xi\cdot\omega)\omega\bigr)+O(s^2)=\eta+O(s^2)
\]

$b$ is Lipschitz, so $b(\eta)=b(\exp_\omega{(s\tanh b(\omega)
\bigl(\xi-(\xi\cdot\omega)\omega\bigr))})+O(s^2)$

Differential at the right hand side, we have
\[\begin{aligned}
&b(\exp_\omega{(s\tanh b(\omega)
\bigl(\xi-(\xi\cdot\omega)\omega\bigr))})\\&\ =b(\omega)+s\tanh b(\omega)
\left\langle
\nabla_S b(\omega),
\xi-(\xi\cdot\omega)\omega
\right\rangle
+
o(s)
\end{aligned}
\]

Plus that $\langle\nabla _Sb(\omega),\omega\rangle=0$, so

\[
b_s(\omega)
=
b(\omega)
-
s(\xi\cdot\omega)
+
s\tanh b(\omega)
\left\langle
\nabla_S b(\omega),
\xi
\right\rangle
+
o(s).
\]
\end{proof}

Now we prove lemma \ref{lem:locally minimum condition} from lemma \ref{lem:border function base point moved}. One can find a simpler proof in the discussion section.
\begin{proof}

For every \(\xi\in S^{n-1}\), recall the integration on lemma \ref{lem:section distance and border function}, we do differential and use Lipschitz property of $b$ and boundness of border function to get dominated convergence, then get the following equation and note it as (*):
\[
0=
\int_{S^{n-1}}
\operatorname{sgn}(b)\cosh^{n-1}(b)
\left[
-(\xi\cdot\omega)
+
\tanh(b)\,\nabla_S b\cdot \xi
\right]
\,d\omega .
\]
\[
=
-\int_{S^{n-1}}
\operatorname{sgn}(b)\cosh^{n-1}(b)(\xi\cdot\omega)\,d\omega
+
\int_{S^{n-1}}
\operatorname{sgn}(b)\cosh^{n-1}(b)\tanh(b)\,
\nabla_S b\cdot\xi\,d\omega .
\]

Now define
$
Q(t)=\frac{\operatorname{sgn}(t)}{{n-1}}\bigl(\cosh^{n-1} t-1\bigr).
$
$
Q'(t)
=
\operatorname{sgn}(t)\cosh^{n-1}(t)\tanh(t).
$

So
$
\operatorname{sgn}(b)\cosh^{n-1}(b)\tanh(b)\,\nabla_S b
=
\nabla_S(Q\circ b)
$

\[
\int_{S^{n-1}}
\operatorname{sgn}(b)\cosh^{n-1}(b)\tanh(b)\,
\nabla_S b\cdot\xi\,d\omega
=
\int_{S^{n-1}}
\nabla_S(Q\circ b)\cdot\xi\,d\omega
\]

Since \(\nabla_S(Q\circ b)\) is tangent to \(S^{n-1}\), we may replace \(\xi\) by the tangential projection
$
\xi-(\xi\cdot\omega)\omega
=
\nabla_S(\xi\cdot\omega).
$
So
\[
\int_{S^{n-1}}
\nabla_S(Q\circ b)\cdot\xi\,d\omega
=
\int_{S^{n-1}}
\nabla_S(Q\circ b)\cdot\nabla_S(\xi\cdot\omega)\,d\omega
\]
By integration by parts on \(S^{n-1}\),
\[
\int_{S^{n-1}}
\nabla_S(Q\circ b)\cdot\nabla_S(\xi\cdot\omega)\,d\omega
=
-\int_{S^{n-1}}
(Q\circ b)\,\Delta_S(\xi\cdot\omega)\,d\omega
\]
Since for the Laplace operator we have $
\Delta_S(\xi\cdot\omega)=-(n-1)(\xi\cdot\omega),
$ we get
\[
\int_{S^{n-1}}
\nabla_S(Q\circ b)\cdot\xi\,d\omega
=
(n-1)\int_{S^{n-1}}
Q(b)(\xi\cdot\omega)\,d\omega
\]
By the definition of \(Q\), 
$
(n-1)Q(b)
=
\operatorname{sgn}(b)\bigl(\cosh^{n-1}(b)-1\bigr).
$
So
\[
\int_{S^{n-1}}
\operatorname{sgn}(b)\cosh^{n-1}(b)\tanh(b)\,
\nabla_S b\cdot\xi\,d\omega
=
\int_{S^{n-1}}
\operatorname{sgn}(b)
\bigl(\cosh^{n-1}(b)-1\bigr)
(\xi\cdot\omega)\,d\omega
\]

Putting this into (*),
\[
0=
-\int_{S^{n-1}}
\operatorname{sgn}(b)\cosh^{n-1}(b)(\xi\cdot\omega)\,d\omega
+
\int_{S^{n-1}}
\operatorname{sgn}(b)
\bigl(\cosh^{n-1}(b)-1\bigr)
(\xi\cdot\omega)\,d\omega
\]
So
\[
0=
-\int_{S^{n-1}}
\operatorname{sgn}(b)(\xi\cdot\omega)\,d\omega
\]
Since this holds for every \(\xi\in S^{n-1}\), we obtain
\[
\int_{S^{n-1}}
\operatorname{sgn}(b(\omega))\,\omega\,d\omega
=
0
\]
\end{proof}

\subsection{Estimate of the Hausdorff Bound}
We work on \textbf{problem \ref{prob:anal form}} here.

Recall that
$
\beta(\omega)
=
\arcsin(\tanh b(\omega))\in(-\frac\pi2,\frac\pi2).
$
is odd and \(1\)-Lipschitz, and $S^{n-1}$ is compact, we can choose
$
\xi\in S^{n-1}
$
such that

$
\beta(\xi)=\max_{\omega\in S^{n-1}}
\beta(\omega)=:m\in[0,\frac\pi2).
$

By the \(1\)-Lipschitz condition, for any $\omega \in S^{n-1}$,
\[
\begin{cases}
\beta(\omega)>0
\text{ if }
d_{S^{n-1}}(\omega,\xi)<m
\\
\beta(\omega)<0
\text{ if }
d_{S^{n-1}}(\omega,-\xi)<m
\end{cases}
\ \implies\ 
\begin{cases}
\beta(\omega)>0
\text{ if }
\omega\cdot\xi>\cos(m)
\\
\beta(\omega)<0
\text{ if }
\omega\cdot\xi<-\cos(m)
\end{cases}
\]

And when $|\omega\cdot\xi|>\cos(m)$, $sgn(b(\omega))\omega = sgn(b(-\omega))(-\omega)$, so the two semi-sphere caps $\{\omega\ |\ \omega\cdot\xi>\cos(m)\}$ and $\{\omega\ |\ \omega\cdot\xi<-\cos(m)\}$ make the same contribution to the integration in locally minimum condition. The rest strip cannot be too small to cancel it to zero.

Recalling the proof of lemma \ref{lem:locally minimum condition}, we can take inner product on the integration and move $\xi$ in, to consider the component along $\xi$. One can see the reason for being valid is that $\xi$ is fixed and the inner product is linear, if one read the simplier proof.

\[
\begin{aligned}
0
&=
\ \int_{\cos m}^{1}
\int_{S^{n-2}}
\, t
\, d\sigma(\omega')\, dt  \\
&\quad
+
\int_{-1}^{-\cos m}
\int_{S^{n-2}}
\, |t|
\, d\sigma(\omega')\, dt \\
&\quad
+
\int_{-\cos m}^{\cos m}
\int_{S^{n-2}}
\operatorname{sgn}\!\bigl(b(t,\omega')\bigr)
\, t
\, d\sigma(\omega')\, dt .
\end{aligned}
\]
Let
$
t=\omega\cdot\xi.
$
The right-hand side has a lower bound. The cap regions' integration is inrelavent to the sign of $b$ any more; For strip region, use the assumption that the $sgn(b)$ is negative if and only if $t>0$, to make the biggest contribution to negative direction. 
\[
\begin{aligned}
0
&\ge
\int_{\cos m}^{1}
\int_{S^{n-2}}
\, t
\, d\sigma(\omega')\, dt  \\
&\quad
+
\int_{-1}^{-\cos m}
\int_{S^{n-2}}
\, |t|
\, d\sigma(\omega')\, dt \\
&\quad
-
\int_{-\cos m}^{\cos m}
\int_{S^{n-2}}
\, |t|
\, d\sigma(\omega')\, dt \\
\end{aligned}
\]
We cancel the common multiplying factor, and get by symmetry
\[
\int_{\{1>t>\cos(m)\}} t\,d\omega
\le
\int_{\{0<t<\cos(m)\}} t\,d\omega
\]

Using the spherical slicing formula
$
d\omega
=
\operatorname{Vol}(S^{n-2})
(1-t^2)^{\frac{n-3}{2}}
\,dt,
$
the common factor
$
\operatorname{Vol}(S^{n-2})
$
is independent of $t$ so cancels, resulting in
\[
\int_{\cos m}^{1}
t(1-t^2)^{\frac{n-3}{2}}
\,dt
\le
\int_0^{\cos m}
t(1-t^2)^{\frac{n-3}{2}}
\,dt
\]

We have
\[
\int t(1-t^2)^{\frac{n-3}{2}}\,dt
=
-\frac{1}{n-1}
(1-t^2)^{\frac{n-1}{2}}
\]

We get
$
(\sin m)^{n-1}
\le
\frac12
$
Therefore we have the uniform estimate
\begin{lem}
\label{lem:border function upper bound}
When locally minimum condition is satisfied
\[
|b(\omega)|
\le
B_n
:=
\operatorname{artanh}
\left(
2^{-\frac1{n-1}}
\right).
\]
\end{lem}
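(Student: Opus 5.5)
The plan is to turn the inequality $(\sin m)^{n-1}\le\frac12$, obtained just before the statement, into a pointwise bound on $|b|$. The key is the monotone change of variables $\beta=\arcsin(\tanh b)$.

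Fix $\tau$ and the minimizing point $p_0$, so that the locally minimum condition (lemma \ref{lem:locally minimum condition}) holds with $b=b_{p_0}$, where $b\neq0$ almost everywhere. Since $\beta$ is continuous on the compact $S^{n-1}$ (it is $1$-Lipschitz), its maximum $m=\beta(\xi)$ is attained. Because $\beta$ is odd, we have
\[
\min_{S^{n-1}}\beta=-m \quad\text{and}\quad \|\beta\|_\infty=m\in[0,\tfrac\pi2).
\]

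Next I would check the step producing $(\sin m)^{n-1}\le\frac12$ for the degenerate cases.
\begin{itemize}
\item If $m=0$, then $b\equiv0$, and the bound is trivial.
\item If $m>0$, the $1$-Lipschitz condition forces $\operatorname{sgn} b=+1$ on the open cap $\{\omega\cdot\xi>\cos m\}$ and $\operatorname{sgn} b=-1$ on its antipode. On each of these caps, $\operatorname{sgn}(b(\omega))\,(\omega\cdot\xi)=|\omega\cdot\xi|$.
\end{itemize}

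Pairing the $\xi$-component of the locally minimum condition with the worst-case sign choice on the strip $|\omega\cdot\xi|<\cos m$ gives
\[
\int_{\cos m}^1 t(1-t^2)^{\frac{n-3}{2}}dt\le\int_0^{\cos m}t(1-t^2)^{\frac{n-3}{2}}dt .
\]
The antiderivative $-\frac1{n-1}(1-t^2)^{\frac{n-1}2}$ evaluates this to
\[
(\sin m)^{n-1}\le 1-(\sin m)^{n-1},
\]
which is exactly $(\sin m)^{n-1}\le\frac12$. For $n=2$ the weight $(1-t^2)^{-1/2}$ is still integrable, so the computation remains valid.

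Finally, $\tanh|b(\omega)|=\sin|\beta(\omega)|\le\sin m\le 2^{-1/(n-1)}$. Since $\operatorname{artanh}$ is increasing on $[0,1)$, this yields $|b(\omega)|\le B_n$ for every $\omega$.

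The main obstacle I expect is justifying that lemma \ref{lem:locally minimum condition} applies to the minimizer, because it assumed $b\neq0$ almost everywhere. I would handle this as the paper indicates, by perturbing $\beta$ on the zero set to obtain a nonvanishing $1$-Lipschitz odd function without increasing $m$. Alternatively, I would note that the sign argument only needs $\int \operatorname{sgn}(b)\,\omega\cdot\xi\ge$ the worst case, which still holds if $\operatorname{sgn}(b)\in[-1,1]$ is interpreted as a subgradient of $|\cdot|$ at zero.
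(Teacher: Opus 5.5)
Your proposal is correct and follows the paper's own argument. You pick $\xi$ where $\beta$ attains its maximum $m$, use the $1$-Lipschitz property to fix the sign of $b$ on the caps $\{|\omega\cdot\xi|>\cos m\}$, take the $\xi$-component of the locally minimum condition with the worst-case sign on the strip, and integrate $t(1-t^2)^{\frac{n-3}{2}}$ to get $(\sin m)^{n-1}\le\frac12$, hence $\tanh|b|\le 2^{-1/(n-1)}$. Your fallback for the $b\neq0$ a.e. issue is cleaner than the paper's perturbation if you phrase it as the one-sided directional derivative of $D$ at $p_0$ in direction $\xi$: this gives $\int_{\{b\neq0\}}\operatorname{sgn}(b)\,(\omega\cdot\xi)\,d\omega\le\int_{\{b=0\}}|\omega\cdot\xi|\,d\omega$, and hence the same cap-versus-strip inequality, since $\{b=0\}$ lies inside the strip.
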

Consequently, in the half-space measure
\[
\mu_H(\tau,\sigma_{p_0})
=
\int_{S^{n-1}}
\int_0^{|b(\omega)|}
\cosh^{n-1}r
\,dr\,d\omega
\]
just replace $b(\omega)$ by $B_n$, and recall that
\[
d_{\mathcal M{(\mathbb \mathbb{I}^n)}}=
\frac{n-1}{\operatorname{2vol}(S^{n-2})}\mu_{\mathrm{dS}^n}
\]
\begin{lem}
For any section $\tau \in \mathcal{M}(\mathbb{I}^n)$, there is a point $p_0\in \mathbb{I}^n$ minimizing the distance between the section and a point section, and
\[
d_{\mathcal M{(\mathbb \mathbb{I}^n)}}(\tau,\sigma_{p_0})
\le
\frac{n-1}{2}
\frac{\operatorname{Vol}(S^{n-1})}
{\operatorname{Vol}(S^{n-2})}
\int_0^{B_n}
\cosh^{n-1}r
\,dr
\]
\end{lem}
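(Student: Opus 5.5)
The plan has three steps: show that a minimizer $p_0$ exists, bound the border function at $p_0$ uniformly, and integrate that bound.

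\emph{Existence of $p_0$.} Fix $\tau\in\mathcal M(\mathbb{I}^n)$ and set $D(p)=d_{\mathcal M}(\tau,\sigma_p)$. By Lemma~\ref{lem:section distance and border function}, together with the continuous dependence of the border function on the base point, $D$ is continuous on $\mathbb{I}^n$. The argument in the subsection on distance along a geodesic gives $D(p)\ge d(o,p)-C$, where $C=D(o)<\infty$. Hence the sublevel set $\{D\le C\}$ is contained in the closed ball of radius $2C$ about $o$. This ball is compact, so $D$ attains its minimum at some $p_0$.

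\emph{Bounding $b$ at $p_0$.} Recenter the de Sitter coordinates at $p_0$ and let $b=b_{p_0}$.
\begin{itemize}
\item By Theorem~\ref{thm: border correspondence lip} and the corollary after it, $b$ is odd and $\beta=\arcsin(\tanh b)$ is $1$-Lipschitz.
\item By the perturbation remark in that subsection, we may assume $b\ne0$ almost everywhere at the cost of an arbitrarily small change in the integral. An alternative is to run the bound below on the set $\{b\neq 0\}$ directly.
\item Lemma~\ref{lem:locally minimum condition} then gives $\int_{S^{n-1}}\operatorname{sgn}(b)\,\omega\,d\omega=0$.
\item Lemma~\ref{lem:border function upper bound} gives $|b(\omega)|\le B_n=\operatorname{artanh}(2^{-1/(n-1)})$ for every $\omega$.
\end{itemize}

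\emph{Integrating the bound.} Since $\cosh^{n-1}r>0$, the inner integral $\int_0^{|b(\omega)|}\cosh^{n-1}r\,dr$ is monotone in $|b(\omega)|$. Therefore
\[
\mu_{\mathrm{dS}^n}(\tau\triangle\sigma_{p_0})\le \operatorname{Vol}(S^{n-1})\int_0^{B_n}\cosh^{n-1}r\,dr .
\]
Multiplying by the normalization constant $\frac{n-1}{2\operatorname{Vol}(S^{n-2})}$ from the definition of $\mu_{\mathcal W}$ gives the stated inequality. This step assumes that the distance in $\mathcal M(\mathbb{I}^n)$ is the normalized wall measure of the symmetric difference, counted once per wall. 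If the factor $2$ from $d_{\text{Section}}$ appears, it must be reconciled with the half-space versus wall counting, since each wall contributes two antipodal half-spaces in $\mathrm{dS}^n$.

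\emph{Main obstacle.} The delicate point is justifying the use of Lemma~\ref{lem:locally minimum condition} at $p_0$ when $b$ vanishes on a set of positive measure. The cleanest route avoids perturbing $b$. Because $D$ has a minimum at $p_0$, its one-sided directional derivatives there are nonnegative in every direction $\xi$. Computing these derivatives from Lemma~\ref{lem:border function base point moved}, the zero set of $b$ contributes a term of the form $+|s|\int_{\{b=0\}}|\xi\cdot\omega|\,d\omega$. This yields an inequality $\bigl|\int\operatorname{sgn}(b)(\xi\cdot\omega)\bigr|\le\int_{\{b=0\}}|\xi\cdot\omega|$ in place of an equality. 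Taking $\xi$ to be the maximizing direction of $\beta$, this inequality still forces the cap-versus-strip comparison used in proving Lemma~\ref{lem:border function upper bound}, because zeros of $b$ lie in the strip $|t|\le\cos m$ and count at worst as sign-opposing. So the same bound $(\sin m)^{n-1}\le\frac12$ follows. I would check this step carefully; everything else is assembly of earlier results.
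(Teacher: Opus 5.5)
Your proposal is correct and shares the paper's skeleton. A minimizer exists by continuity and coercivity of $D$. The cap-versus-strip comparison in the direction $\xi$ maximizing $\beta$ gives $(\sin m)^{n-1}\le\frac12$. Integrating $|b|\le B_n$ and multiplying by $\frac{n-1}{2\operatorname{Vol}(S^{n-2})}$ then gives the bound.

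The genuine difference is how you treat $\{b=0\}$. The paper assumes $b\neq0$ a.e.\ via its Discussion-section perturbation, which places sign-balanced bumps on the interior of the zero set. The perturbed function then satisfies the balance condition only if $\int_{\{b\neq0\}}\operatorname{sgn}(b)\,\omega\,d\omega=0$ already holds. Minimality does not force this: think of $b$ supported on two small antipodal caps. Moreover, the perturbation does nothing at all if the zero set has positive measure but empty interior.

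Your one-sided inequality $\bigl|\int_{\{b\neq0\}}\operatorname{sgn}(b)(\xi\cdot\omega)\,d\omega\bigr|\le\int_{\{b=0\}}|\xi\cdot\omega|\,d\omega$ is exactly what minimality yields. Write $t=\xi\cdot\omega$. The zeros of $b$ lie in the strip $|t|\le\cos m$, so the inequality still gives $\int_{\{t>\cos m\}}t\,d\omega\le\int_{\{0<t<\cos m\}}t\,d\omega$. This checks out, so make it the main argument rather than a fallback.

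The derivative is easiest to compute in the fixed frame at $p_0$, as in the paper's easy proof of the locally minimum condition. Set $F(x)=\int_0^x\cosh^{n-1}r\,dr$ and $a_s(\omega)=\operatorname{artanh}(\tanh s\,\xi\cdot\omega)$. Then $D(p_s)$ is a constant times $\int_{S^{n-1}}|F(b)-F(a_s)|\,d\omega$, and dominated convergence follows from $|F(a_s)|\le s\cosh^{n-1}s$. Lemma~\ref{lem:border function base point moved} is not needed, and its $o(s)$ is only pointwise anyway.

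Two smaller points. First, continuity of $D$ is immediate from $|D(p)-D(q)|\le d(p,q)$, so you do not need continuity of border functions in the base point. Second, no factor-$2$ reconciliation is required. The constant $\frac{n-1}{2\operatorname{Vol}(S^{n-2})}$ was calibrated on $A_t$, which contains both $u$ and $-u$ for each separating wall. The same holds for $\tau\triangle\sigma_{p_0}$ when measured by $\int_{S^{n-1}}\int_0^{|b|}\cosh^{n-1}r\,dr\,d\omega$. So your normalization is the one forced by the isometric embedding, and the factor $2$ in $d_{\text{Section}}$ would break it.
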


That is,
\[
d_M(\mathbb{I}^n,\mathcal M(\mathbb{I}^n))
\le
\frac{n-1}{2}
\frac{\operatorname{Vol}(S^{n-1})}
{\operatorname{Vol}(S^{n-2})}
\int_0^{
\operatorname{artanh}(2^{-1/(n-1)})
}
\cosh^{n-1}r
\,dr.
\]

We know that 
\[\frac{\operatorname{Vol}(S^{n-1})}{\operatorname{Vol}(S^{n-2})} = \sqrt{\pi} \, \frac{\Gamma\left(\frac{n-1}{2}\right)}{\Gamma\left(\frac{n}{2}\right)}\]

So we prove the theorem \ref{thm:hausdorff bound}

\[
d_M(\mathbb{I}^n,\mathcal M(\mathbb{I}^n))
\le
\sqrt{\pi} 
\frac{n-1}{2}
\, \frac{\Gamma\left(\frac{n-1}{2}\right)}{\Gamma\left(\frac{n}{2}\right)}
\int_0^{
\operatorname{artanh}(2^{-1/(n-1)})
}
\cosh^{n-1}r
\,dr
\]

\subsection{The Hausdorff distance for Dimension 2}

In this chapter, we give an upper bound for $n=2$, then construct a admissible section to show that it is sharp. To prove it is share, we don't calculate the global minimum. Instead, we will show that there is only one local minimum.

When $n=2$, we have $\beta\le\frac\pi6$ by lemma \ref{lem:border function upper bound}. The integration we want to estimate becomes 
$
\int_{S^1}|\tan\beta(\omega)|d\omega
$
And the locally minimum condition becomes
$
\int_{S^1}\text{sgn}( \tan\beta(\omega))\omega d\omega=0
$

There is a $\beta$ satisfying this. For $\omega(\alpha) =(\cos\alpha,\sin\alpha),\,\alpha\in\mathbb{R},$ we set
\[\beta(\omega(\pm\frac{2\pi}3))=\beta(\omega(0))=\frac\pi6\]
\[\beta(\omega(\pm\frac{\pi}3))=\beta(\omega(\pm\pi))=-\frac\pi6\]
and connect these points linearly. We will see that this give the sharp bound of Hausdorff distance.

By the proof of lemma \ref{lem:border function upper bound}, we can see that if $\beta\circ\omega(\alpha)\ne0$ for $\alpha\in (a,b)$, then $b-a\le\frac\pi3$. If we moreover make the interval locally maximal, equivalently $\beta\circ\omega(a)=\beta\circ\omega(b)=0$, then by $1$-Lipschitz property of $\beta$, we have 
$
\beta\circ\omega(\alpha)\le\text{min}\{\alpha-a,b-\alpha\},\,\alpha\in[a,b].
$
So
\[
\int_a^b{|\tan(\beta\circ\omega(\alpha))|}d\alpha\le 2\ln(\cos\frac{b-a}{2})
\]
The subset where $\beta\ne 0$ lying in $S^1$ is of measure at most $2\pi=6\times \frac\pi3$ and of at most countably infinite components, so it is easy to see by convexity that the integration reaches maximal in the case we construct. So for all admissiblt section that is finite-distance away from one of the point section locally minimizing the distance, the distance is at most $12\ln2-6\ln3$. That is,
\[
d_\mathcal M(\mathbb I^2,\mathcal M(\mathbb I^2))
\le
12\ln2-6\ln3
\]

The next thing is to prove that in this construction, the base point is the point globally minimizing the distance. Suppose there is a point $p=(\cosh r,(\sinh r)\,\omega),\,r\ge 0,\,\omega \in S^{1}$, the set of open half-spaces bounding a hyperplane passing $p$ is \[\{u(\alpha)=(\sinh t,(\cosh t)\alpha): t=t(\alpha) = \operatorname{arctanh}( (\tanh r)(\omega \cdot \alpha)),\alpha \in S^{1}\}\]

\begin{thm}
\[
d_\mathcal M(\mathbb I^2,\mathcal M(\mathbb I^2))
=
12\ln2-6\ln3
\]
\end{thm}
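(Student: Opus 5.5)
The upper bound $d_\mathcal M(\mathbb I^2,\mathcal M(\mathbb I^2))\le 12\ln2-6\ln3$ has already been obtained above: Lemma \ref{lem:border function upper bound} forces every maximal arc on which $\beta\neq0$ to have length at most $\frac\pi3$, the $1$-Lipschitz tent bound holds on each such arc, and a convexity argument over at most countably many arcs then applies. So what remains is the lower bound. I will show that the explicit section $\tau$ just constructed satisfies $d_\mathcal M(\tau,\sigma_p)\ge 12\ln2-6\ln3$ for every $p\in\mathbb I^2$. In that construction $\beta$ is the zig-zag with values $\pm\frac\pi6$ at multiples of $\frac\pi3$, and the base point is $o$. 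Evaluating the integral at $p=o$ over the six tents gives exactly $12\ln(2/\sqrt3)=12\ln2-6\ln3$, so the task is to prove that $o$ is the global minimizer of $D(p)=d_\mathcal M(\tau,\sigma_p)$.

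\emph{Step 1: a closed formula for $D(p)$ in the coordinates centred at $o$.} For $n=2$ we have $\cosh r\,dr=\sec^2\rho\,d\rho$, so the measure between two border curves over the direction $\alpha$ is $|\tan\beta_1(\alpha)-\tan\beta_2(\alpha)|$. By the description, given just before the statement, of the half-spaces whose walls pass through $p=(\cosh r,\sinh r\,\omega_\theta)$, the border of $\sigma_p$ is $\tan\gamma_p(\alpha)=c_p(\alpha)$, where
\[
c_p(\alpha)=\frac{\tanh r\cos(\alpha-\theta)}{\sqrt{1-\tanh^2 r\cos^2(\alpha-\theta)}}=\frac{\sinh r\cos(\alpha-\theta)}{\sqrt{1+\sinh^2 r\sin^2(\alpha-\theta)}}.
\]
Hence, up to the fixed normalizing constant,
\[
D(p)=\int_0^{2\pi}\bigl|\tan\beta(\alpha)-c_p(\alpha)\bigr|\,d\alpha .
\]

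\emph{Step 2: reduce to a one-variable comparison.} Apply $|f-g|\ge \operatorname{sgn}(f)(f-g)$ with $f=\tan\beta$. This gives $D(p)\ge D(o)-\int \operatorname{sgn}(\beta)\,c_p\,d\alpha$, so it suffices to show
\[
\int_0^{2\pi}\operatorname{sgn}(\beta(\alpha))\,c_p(\alpha)\,d\alpha\le 0\quad\text{for all }r\ge0,\ \theta .
\]
Here $\operatorname{sgn}\beta$ is the $\frac{2\pi}3$-periodic square wave, equal to $+1$ on arcs of length $\frac\pi3$ centred at $0,\pm\frac{2\pi}3$. The function $c_p(\alpha)=g(\alpha-\theta)$ is odd under $\alpha\mapsto\alpha+\pi$. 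Using these symmetries, I will reduce the integral to a single period and write it as $\int_{-\pi/6}^{\pi/6}\bigl[\sum_{k} \pm g(\alpha-\theta+k\pi/3)\bigr]d\alpha$. The sign of this expression then has to be analysed as a function of $\theta$ and $s=\sinh r$.

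If the crude inequality turns out to be false for some $\theta$, I will instead keep the full $|\cdot|$ and use a first-variation argument. By the $\frac{2\pi}3$-rotational symmetry and the oddness, it suffices to show that $\frac{d}{dr}D(p_{r,\theta})>0$ for $r>0$ and all $\theta$ in a fundamental domain $[0,\frac\pi3]$. The derivative is $-\int\operatorname{sgn}(\tan\beta-c_p)\,\partial_r c_p\,d\alpha$, where $\partial_r c_p=\cosh r\cos(\alpha-\theta)/(1+s^2\sin^2(\alpha-\theta))^{3/2}$. It is controlled by where the curve $c_p$ crosses the zig-zag $\tan\beta$. This also fits the paper's stated aim of showing that there is only one local minimum.

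\emph{Main obstacle.} The hard part is the sign analysis of this crossing pattern. For small $r$ the crossings sit near the zeros of $\beta$ and the derivative is positive by the locally minimum computation. For large $r$ the function $c_p$ is nearly $\pm\infty$ on two half-circles, so $D(p)$ grows like the distance from $o$ to $p$. The intermediate range is where the real work lies. There I expect to split $\alpha$ into the six tents, bound the contribution of each tent using monotonicity of $c_p$ in $r$, and use the fact that $c_p$ can cross each linear piece of $\tan\beta$ at most twice.
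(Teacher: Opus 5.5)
Your reduction is the same as the paper's. The upper bound comes from the arc-length, tent and convexity argument, $\int_{S^1}|\tan\beta_*|\,d\alpha=12\ln2-6\ln3$, and everything rests on showing that $o$ minimizes $D(p)=d(\tau_*,\sigma_p)$. Your fallback, strict positivity of the radial derivative for every $r>0$ and every $\theta$, is exactly what the paper proves.

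\textbf{The gap.} The proposal stops precisely where that proof has to happen, so the lower bound is not established.

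\textbf{Your primary route is false, not just unproven.} Both $\operatorname{sgn}\beta_*(\alpha)=\operatorname{sgn}\cos3\alpha$ and $c_p$ integrate to zero over the circle. Hence $J(\theta)=\int_0^{2\pi}\operatorname{sgn}(\beta_*)\,c_p\,d\alpha$ has mean zero in $\theta$, and it can be $\le0$ for all $\theta$ only if it vanishes identically. It does not vanish. Using the antiderivative $\operatorname{arsinh}(s\sin x)$ of $s\cos x/\sqrt{1+s^2\sin^2x}$, one finds $J(0)=8\operatorname{arsinh}(s/2)-4\operatorname{arsinh}(s)>0$ for every $s=\sinh r>0$. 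So the bound $|f-g|\ge\operatorname{sgn}(f)(f-g)$ fails in the direction of every peak of $\beta_*$, at every distance.

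\textbf{What the first-variation route needs.} Two ingredients are missing, and they are the heart of the paper's argument.

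First, compare modified borders. With $\rho=\tanh r$ and $q(\alpha)=\arcsin(\rho\cos(\alpha-\theta))$ one has $\operatorname{sgn}(\tan\beta_*-c_p)=\operatorname{sgn}(\beta_*-q)$. Here $|q'|<1$ strictly, while $\beta_*$ has slope exactly $\pm1$ on each of its six pieces. So $\beta_*-q$ is strictly monotone on each piece and changes sign at most once there. Your count of ``at most twice'' (for $\tan\beta_*$, which is not piecewise linear) is weaker than the truth and does not pin down the crossing pattern. Add the antipodal oddness of $\beta_*-q$: an up-crossing at $a$ forces a down-crossing at $a+\pi$. This leaves only a few patterns, in fact one or three up-crossings.

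Second, evaluate exactly instead of bounding tents. The $\rho$-derivative of the border $\rho\cos x/\sqrt{1-\rho^2\cos^2x}$ is $\cos x/(1-\rho^2\cos^2x)^{3/2}=\Phi_\rho'(x)$, where $\Phi_\rho(x)=\sin x/\bigl((1-\rho^2)\sqrt{1-\rho^2\cos^2x}\bigr)$. Therefore
\[
\frac{dD}{d\rho}=-\frac{2}{1-\rho^2}\sum_j\Bigl(\frac{\sin(b_j-\theta)}{\cos\beta_*(b_j)}-\frac{\sin(a_j-\theta)}{\cos\beta_*(a_j)}\Bigr),
\]
where the $(a_j,b_j)$ are the components of $\{\beta_*>q\}$. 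On a piece where $\beta_*=\lambda(\alpha-\gamma)$, the crossing equation $\sin\beta_*=\rho\cos(\alpha-\theta)$ turns $\frac{\sin(\alpha-\theta)}{\cos\beta_*(\alpha)}$ into $\frac{\lambda\rho-\sin(\theta-\gamma)}{1-\lambda\rho\sin(\theta-\gamma)}$. This depends only on the piece, not on where the crossing sits, and the sign can then be checked pattern by pattern.

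Exactness is essential because the derivative is exactly $0$ at $r=0$. The locally minimum condition gives vanishing there, not positivity as you state. Positivity for small $r>0$ comes only from the crossings moving off the zeros of $\beta_*$, and tent-by-tent estimates cannot capture that cancellation.

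(A remark that applies to the paper as well: $\int_{S^1}|\tan\beta|\,d\alpha$ is a $\mu_{\mathrm{dS}^2}$-measure. With $\mu_{\mathcal W}=\frac14\mu_{\mathrm{dS}^2}$ for $n=2$, the distance in $\mathcal M(\mathbb I^2)$ carries a factor $\frac14$. Without it, the value would exceed the paper's general bound $\frac{\pi}{2\sqrt3}$ at $n=2$.)
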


\begin{proof}
We note the construction as $\beta_*$, and the corresponding admissible section as $\sigma_*$. Simple calculation shows that \[
\sinh(\operatorname{arctanh}(x))
=
\tan (\arcsin (x))
=
\frac{x}{\sqrt{1-x^2}}
\]
By simple calculation, for any hyperbolic point $p$, we have
\[
d_\mathcal{M}(\sigma_p,\sigma_*)=\int_{-\pi}^{\pi}\,|\tan \beta_* \circ \omega(t)-\sinh \circ \operatorname{arctanh} ((\tanh r)(\alpha \cdot \omega(t)))|\,dt
\]

In fact we don't need to prove it is a global minimum directly. We just need to prove that $p\ne o$ is not a local minimum, because the distance is proved to be tending to infinity when $p$ walks along a geodesic line far away, see prop \ref{prop:large section change}. In the following part we use notation $\beta_*$ instead of $\beta_* \circ \omega$.

\textbf{[Step 1: Transformate it into an easy form.]}

Note $\rho=\tanh r\in(-1,1)$. We get the sign of the things in the absolute value symbol as
$S_{\rho,\theta}:=$
\[
sgn(\tan \beta_* (\alpha)-\frac{\rho \cos(\alpha - \theta)}{(1-\rho^2\cos^2(\alpha-\theta))^{\frac12}} = sgn(\beta_*(\alpha)-\text{arcsin}(\rho\cos(\alpha - \theta)))
\]
Note $q_{\rho,\theta}(\alpha):=\arcsin(\rho\cos(\alpha-\theta))$. By locally minimum condition, we only need to show that, for $p> o$, moving in direction $\theta$,
\[
I_{\rho,\theta}:=\int_0^{2\pi}\,sgn(\beta_*(\alpha)-q_{\rho,\theta}(\alpha))\,\frac{\cos(\alpha-\theta)}{(1-\rho^2\cos^2(\alpha-\theta))^\frac32}d\alpha\ne0
\]
Let $\Phi_\rho(x)=\frac{\sin(x)}{(1-\rho^2)(1-\rho^2\cos^2(x))^\frac12},\,x=\alpha-\theta,$ 
then
$\Phi_\rho^\text{'}(x)=\frac{\cos x}{(1-\rho^2\cos^2(x))^\frac32}$.
\[
I_{\rho,\theta}=2\int_{\Phi_\rho(-\frac\pi2)}^{\Phi_\rho(\frac\pi2)}S_{\rho,\theta}\,d\Phi_\rho(x)
\]
Let $P_{\rho,\theta}=\{\alpha\in[c,c+2\pi):S_{\rho,\theta}>0\}=\{\alpha:\beta(\alpha)>q_{\rho,\theta}(\alpha)\}$, which consists of discrete interval segments for fixed c. We choose c so that  $\beta_*(c)=q_{\rho,\theta}(c)$ and ${\beta_*}_+^\text{'}(c)=1$, then write $\text{interior}(P_{\rho,\theta})=\cup_j\,(a_j,b_j),$ then \[I_{\rho,\theta}=2\Sigma_j(\Phi_\rho(b_j-\theta)-\Phi_\rho(a_j-\theta))\]

This is valid, and we will explain here. First notice that 
\[
q_{\rho,\theta}^\text{'}(\alpha)=-\frac{\rho\sin(\alpha-\theta)}{(1-\rho^2\cos^2(\alpha-\theta))^\frac12}\in(-1,+1)
\]
Plus on every $\frac\pi3$-width interval where $\beta_*$ is increasing or decreasing, $\beta_*$ is linear with slope $\pm 1$. So $\beta_*(\alpha)-q_{\rho,\theta}(\alpha)$ increases(decreases) when and only when $\beta_*(\alpha)$ increases(decreases). That is, for integer $k$, $\beta_*(\alpha)-q_{\rho,\theta}(\alpha)$ increase on $({(2k-1)}\frac\pi3,{2k}\frac\pi3)$ and decrease on $({2k}\frac\pi3,{(2k+1)}\frac\pi3)$. As a result, there is at most one point $\alpha$ satisfying $\beta_*(\alpha)-q_{\rho,\theta}(\alpha)=0$ on such $\frac\pi3$-width closed interval, and the $\alpha$ is sign-changing zero point if and only if it lies in the interior of the interval.

Moreover, we can prove that there is only 3 cases in the period $[c,c+2\pi)$: (1) 2 sign-changing ones; (2) 2 sign-changing ones, 2 sign-keeping ones; (3) 6 sign-changing zero points. First by anti-period $\frac\pi2$ there is a sign-changing zero points, and by anti-period $\frac\pi2$ either it or it plus $\pi$ is the one where $\beta_*$ increases. The one where $\beta_*$ increases is picked as $c$. Next consider the negative peak of the function between $c$ and $c+\pi$. The monotonities are clear, so it is obvious.

\textbf{[Step 2: Get the transformation for end points of segments.]}

We see that the integration we concerned is decided by pairs $(a_j,b_j)$. At these zero points, $1-\rho^2\cos^2(\alpha-\theta)=\cos^2\beta(\alpha)$, we get
\[
\Phi_\rho(\alpha-\theta)=\frac{\sin(\alpha-\theta)}{(1-\rho^2)\cos\beta_*(\alpha)}
\]

Note
\[
c_j=\frac{\sin(b_j-\theta)}{\cos\beta_*(b_j)}-\frac{\sin(a_j-\theta)}{\cos\beta_*(a_j)}
\]
Then $I_{\rho,\theta}=\frac2{1-\rho^2}\Sigma_jc_j,\, j\in J$.

When $\beta_*(\alpha)-q_{\rho,\theta}(\alpha)=0$, we have $\sin\beta_*(\alpha)=\rho\cos(\alpha-\theta)$. On the interval where $\beta_*$ keeps a monotonicity, note $\beta_*(\alpha)=\lambda (\alpha-\gamma),\,\lambda\in\{\pm 1\}$ and $\gamma$ is a constant for the interval. Then
\[
\lambda(\sin\alpha\cos\gamma-\cos\alpha\sin\gamma)=\rho(\cos\alpha\cos\theta+\sin\alpha\sin\theta)
\]
Notice that $\cos\beta(\alpha)>0$. And we get
\[
\sin(\alpha-\theta)(1-\lambda\rho\sin(\theta-\gamma))=(\lambda\rho-\sin(\theta-\gamma))\cos(\alpha-\gamma)
\]
Finally
\[
\frac{\sin(\alpha-\theta)}{\cos(\alpha-\gamma)}=\frac{(\lambda\rho-\sin(\theta-\gamma))}{(1-\lambda\rho\sin(\theta-\gamma))}
\]

For $a_j$, $\lambda=1$. For $b_j$, $\lambda=-1$. We choose $\gamma$ as the mid-point of the interval segment with monotonity. That is, for integer $k$, if $a_j\in[(2k-1)\frac{\pi}{3},2k\frac{\pi}{3}]$, $\gamma_{left}(a_j)={(2k-\frac12)}\frac{\pi}{3}$; if $b_j\in[2k\frac{\pi}{3},(2k+1)\frac{\pi}{3}]$, $\gamma_{right}(b_j)=(2k+\frac{1}{2})\frac{\pi}{3}$. Then $\beta_*(a_j)=a_j-\gamma_{left}(a_j),\,\beta_*(b_j)=\gamma_{right}(b_j)-b_j$. Let $f(x)=\frac{x-\rho}{1-\rho x}$, then
\[
\begin{aligned}
c_j \,
&=
\frac{\sin(b_j-\theta)}{\cos\beta(b_j)}-\frac{\sin(a_j-\theta)}{\cos\beta(a_j)}\\
&=
\frac{-\rho-\sin(\theta-\gamma_{right}(b_j))}{1+\rho\sin(\theta-\gamma_{right}(b_j))}
-
\frac{\rho-\sin(\theta-\gamma_{left}(a_j))}{1-\rho\sin(\theta-\gamma_{left}(a_j))}\\
&=
f(\sin(\gamma_{right}(b_j)-\theta))-f(\sin(\gamma_{left}(a_j)-\theta))
\end{aligned}
\]

Note that we have
\[
f(x)+f(-x)=\frac{x-\rho}{1-\rho x}+\frac{-x-\rho}{1+\rho x}=\frac{2\rho(1-x^2)}{1-\rho^2x^2}<0
\]

One may be worried about if a point is a sign-keeping zeros, so that it lies in two intervals that intersect. In this case we just respect the $(a_j,b_j)$ we are discussing.

Especially, when $a_j$ and $b_j$ lie in two nearby interval segments, we have $\gamma_{right}(b_j)=\gamma_{left}(a_j)+\frac\pi3$. Note $m_j=\gamma_{right}(b_j)-\frac\pi6=\gamma_{left}(a_j)+\frac\pi6$, then we have
\[
\begin{aligned}
c_j \,
&=
f(\sin((m_j-\theta)+\frac\pi6))-f(\sin((m_j-\theta)-\frac\pi6))
\\
&=
\frac {(\rho\cos(m_j-\theta)-\frac12)(\rho-2\cos(m_j-\theta))} {(1-\rho\sin(\frac\pi6+(m_j-\theta)))(1-\rho\sin(\frac\pi6-(m_j-\theta)))}
\end{aligned}
\]

\textbf{[Step 3: Use information of end points for concrete calculation.]}

We recall that there are 3 cases. We discuss respectively.

(1) On $[c,c+2\pi)$, there is 2 zero points of $\beta(\alpha)-q_{\rho,\theta}(\alpha)$.

\begin{figure}[htbp]
  \centering
  \includegraphics[width=0.6\textwidth]{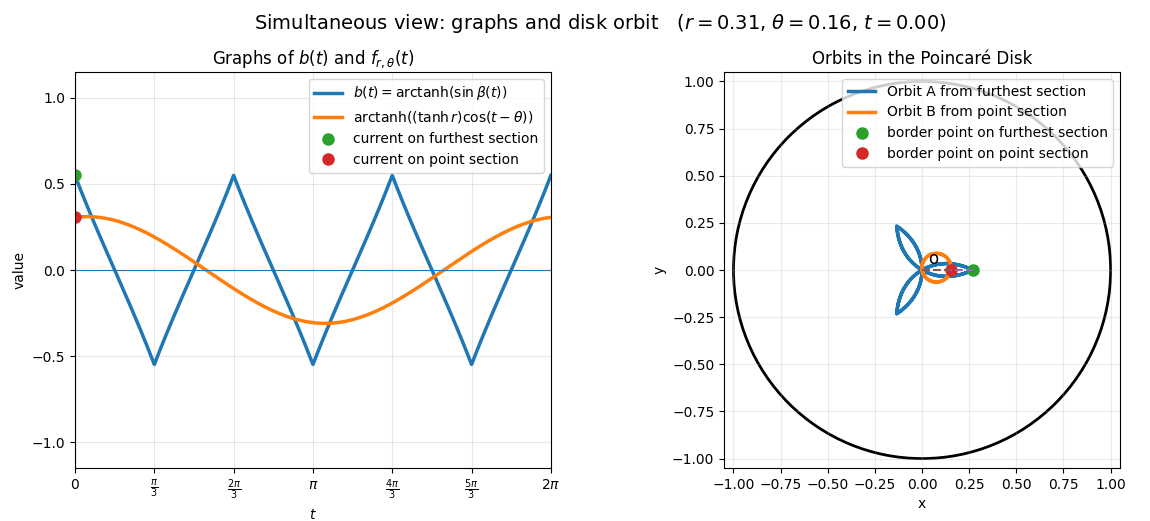}
  \caption{2 zero points}
  \label{2 zero points}
\end{figure}

Then they are all sign-changing zero points, and $\gamma_{right}(b)=\gamma_{left}(a)+\pi$. So
\[
\begin{aligned}
c=&f(\sin(\gamma_{left}(a)+\pi-\theta))+f(\sin(\gamma_{left}(a)-\theta))\\
=
&f(-\sin(\gamma_{left}(a)-\theta))+f(\sin(\gamma_{left}(a)-\theta))<0
\end{aligned}
\]
\[
I_{\rho,\theta}=2c<0
\]

(2) On $[c,c+2\pi)$, there is 4 zero points of $\beta(\alpha)-q_{\rho,\theta}(\alpha)$, and there are exactly 2 sign-changing ones.

\begin{figure}[htbp]
  \centering
  \includegraphics[width=0.6\textwidth]{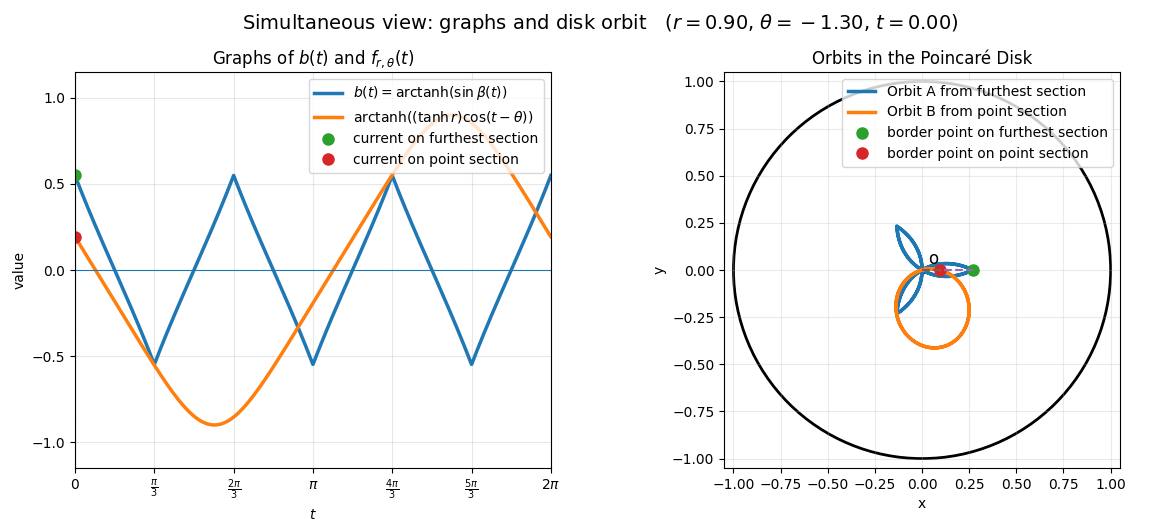}
  \caption{almost 4 zero points}
  \label{4 zero points}
\end{figure}

Let's assume $\beta_*$ increases at $c$ which is a sign-changing one. We have
$
\gamma_{left}(c)+\pi=\gamma_{right}(c+\pi)
$
There is exactly one sign-keeping zero point $c_*\in[c,c+\pi]$. In fact, $c_*$ is the $x$ coordinate of a downward peak of $\beta_*$, so \[c_*=\gamma_{left}(c_*)-\frac\pi6=\gamma_{right}(c_*)+\frac\pi6\]
\[
\begin{aligned}
c_0+c_1&=f(\sin(\gamma_{right}(c_*)-\theta))-f(\sin(\gamma_{left}(c)-\theta))\\&\,\,\,\,\,\,\,\,+f(\sin(\gamma_{right}(c+\pi)-\theta))-f(\sin(\gamma_{left}(c_*)-\theta))\\
&=-f(\sin(\gamma_{left}(c)-\theta))+f(\sin(\gamma_{left}(c)+\pi-\theta))\\
&=-f(\sin(\gamma_{left}(c)-\theta))+f(-\sin(\gamma_{left}(c)-\theta))<0
\end{aligned}
\]
\[I_{\rho,\theta}=2(c_0+c_1)<0\]

(3) On $[c,c+2\pi)$, there is 6 zero points of $\beta(\alpha)-q_{\rho,\theta}(\alpha)$

\begin{figure}[htbp]
  \centering
  \includegraphics[width=0.6\textwidth]{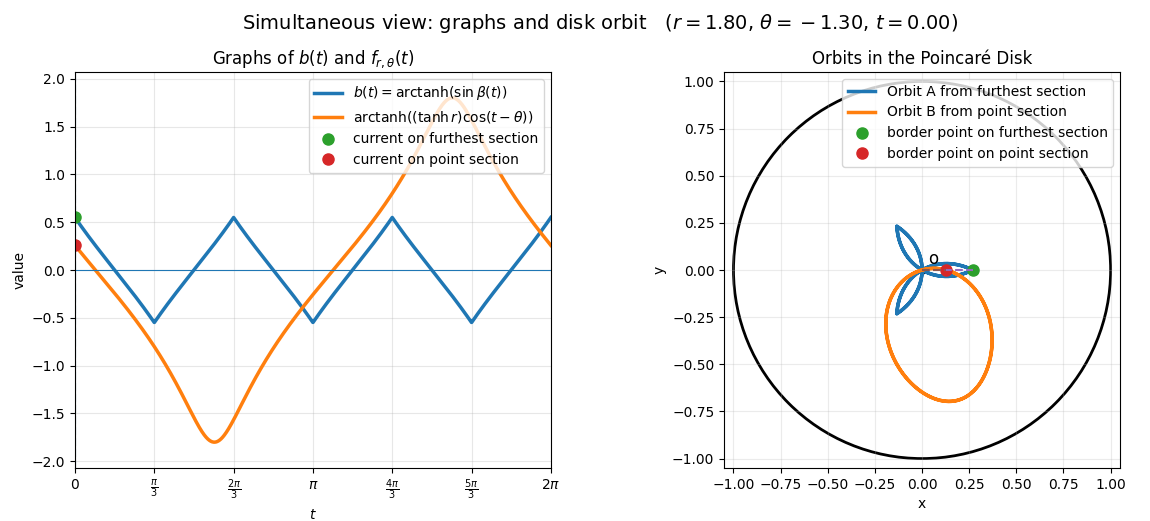}
  \caption{6 zero points}
  \label{6 zero points}
\end{figure}

Recall the monotonity, then ther are all sign-changing zero points, $a_0< b_0< a_1< b_1< a_2< b_2$. By anti-period $\pi$, we have
\[
b_1-a_0=a_2-b_0=b_2-a_1=\pi
\]

So the six terms $f(\sin(\frac\pi6\pm(m_j-\theta))), j\in{0,1,2}$
pair into some \[
(f(x_i),\,f(-x_i)),\,i\in{0,1,2}
\]

Here $x_i=\sin(\frac\pi6\pm(m_j-\theta))\in(-1,1)$, so 
\[\Sigma_jc_j=\Sigma_i(f(x_i)+f(-x_i))<0\]
\[I_{\rho,\theta}=2\Sigma_jc_j<0\]

\smallskip
Now that we prove $I_{\rho,\theta}<0$, so
\[d_\mathcal{M}(\sigma_p,\sigma_*)>d_\mathcal{M}(\sigma_o,\sigma_*)=12\ln2-6\ln3\]
\end{proof}

\section{Discussion}

\subsection{Medianization for Sphere}
There is a measured wall structure for $S^n$ embedded in $\mathbb R^{n+1}$ as $\{(x_1,\dots,x_n):x_1^2+\dots+x_n^2=1 \}$

For any point $p\in S^n$, we define the open half-space as $h_p=\{x\in S^n:\langle p,x\rangle>0\}$, then $ {h_p}^c\subset S^n$ is a closed half-space. The set of walls is $\mathcal W={\{h_p,{h_p}^c\},\ p\in S^n}$. The reason to define like this is that when $n\ge1$, a totally geodesic embedded $S^{n-1}$ in $S^n$ is in one-to-one correspondence to a $n$-dimensional hyperplane crossing the origin point and the $S^n$.

The G-invariant is immediate with the usual measure of $S^n$ for walls(half-spaces). The rest of the measured wall structure is obvious. And we have
\[
d_{S^n}(x,y) = \frac{\pi}{\text{vol}(S^n)}\mu_{S^n}H(x|y)
\]

The intersection condition for walls and inclusion condition for half-spaces are trivial: Every two distinct walls always intersect with each other when $n\ge2$, and never intersect when $n=1$; Every two distinct open half-spaces have no inclusion relation. So, every section that is symterical to its complement is an admissible section. Notice that the measure of wall-space is finite. As a result, the medianization is just the set of every section with measurable chosen open half-spaces symterical to the complement of chosen set.

A point section $\sigma_p$ is just represented by a open semi-shpere cap centered at the point $p$. The Hausdorff distance is
\begin{fact}
\[
d_{\mathcal M}(S^n,\mathcal M(S^n))
=
\frac{\pi}{2}
\]
\end{fact}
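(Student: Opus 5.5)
The plan is to prove the two inequalities $\le\frac{\pi}{2}$ and $\ge\frac{\pi}{2}$ separately, working with sections as subsets of the wall-index sphere. By the description preceding the statement, an element of $\mathcal M(S^n)$ is a measurable set $A\subset S^n$ of chosen indices $p$ (meaning $h_p$ is chosen) which is odd: $A$ and $-A$ partition $S^n$ up to a null set. The point section $\sigma_x$ is the open hemisphere $h_x$. Thus $\mu(A)=\frac12\operatorname{vol}(S^n)$ for every section. The normalization $d_{S^n}(x,y)=\frac{\pi}{\operatorname{vol}(S^n)}\mu_{S^n}H(x|y)$ gives $d(A,B)=\frac{\pi}{\operatorname{vol}(S^n)}\mu(A\triangle B)$ for sections $A,B$. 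As a check, antipodal points $x,-x$ have $h_x\triangle h_{-x}=S^n$ up to a null set, so their distance is $\pi$, which is correct. I would first record the identity
\[
\mu(A\triangle h_x)=\operatorname{vol}(S^n)-2\mu(A\cap h_x),
\]
which holds because both sets have measure $\frac12\operatorname{vol}(S^n)$.

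For the upper bound I will use oddness. Since $h_{-x}$ is the complement of $h_x$ up to a null set, we have $\mu(A\triangle h_x)+\mu(A\triangle h_{-x})=\mu(S^n)=\operatorname{vol}(S^n)$. Hence for every $x$, one of $\sigma_x,\sigma_{-x}$ lies at distance at most $\frac{\pi}{2}$ from $A$. Therefore $\inf_x d(A,\sigma_x)\le\frac{\pi}{2}$, and taking the supremum over $A$ gives $d_{\mathcal M}(S^n,\mathcal M(S^n))\le\frac{\pi}{2}$.

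For the lower bound, by the identity above it suffices to find odd measurable sets $A_k$ with
\[
\sup_x\Bigl|\mu(A_k\cap h_x)-\tfrac14\operatorname{vol}(S^n)\Bigr|\to0 .
\]
An exact extremizer should not exist, since the hemispherical transform is injective on odd functions. So I will aim for an approximating sequence rather than attempting to attain $\frac{\pi}{2}$. Take $n\ge1$, let $\theta(p)$ be the polar angle of the projection $(p_1,p_2)$ (defined off a null set), and set $A_k=\{p:\sin(k\theta(p))>0\}$ with $k$ odd. Since $p\mapsto-p$ shifts $\theta$ by $\pi$ and $k\pi$ is an odd multiple of $\pi$, $A_k$ is odd, so it is a section. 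Every odd section is admissible, because there are no inclusions among distinct half-spaces. Its measure is finite since the whole wall space has finite measure, so $A_k\in\mathcal M(S^n)$.

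The main step is the uniform convergence. First, $\mathbf 1_{A_k}\rightharpoonup\frac12$ weakly in $L^2(S^n)$. This follows by disintegrating the measure along the $\theta$-circles, where the density is independent of $\theta$, and applying the Riemann–Lebesgue lemma. Applied to $\mathbf 1_{h_x}$, this gives pointwise convergence $\mu(A_k\cap h_x)\to\frac14\operatorname{vol}(S^n)$ for each fixed $x$. To upgrade to uniform convergence, I would use that $x\mapsto\mathbf 1_{h_x}$ is continuous into $L^1$, with $\mu(h_x\triangle h_y)=\frac{\operatorname{vol}(S^n)}{\pi}d(x,y)$. The maps $x\mapsto\mu(A_k\cap h_x)$ are therefore uniformly $\frac{\operatorname{vol}(S^n)}{\pi}$-Lipschitz, and an Arzelà–Ascoli / finite-net argument over the compact $S^n$ gives uniform convergence. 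It follows that $\inf_x d(A_k,\sigma_x)\to\frac{\pi}{2}$. Combined with the upper bound, this yields $d_{\mathcal M}(S^n,\mathcal M(S^n))=\frac{\pi}{2}$. The case $n=1$ is included; there $A_k$ is just the union of alternating arcs of width $\pi/k$.
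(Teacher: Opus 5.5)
Your proof is correct and follows the paper's route. The upper bound is the observation, which the paper calls obvious from $\mu(A)=\tfrac12\operatorname{vol}(S^n)$, that $\sigma_x$ and $\sigma_{-x}$ split the total measure. The lower bound comes from finely alternating odd sections: read literally, the paper's Cantor-style iteration on $S^1$ (trisect each chosen arc and send the middle third to its antipode) produces alternating arcs of width $\pi/3^j$, i.e.\ your $A_{3^j}$. What you add is the justification the paper leaves out: the Riemann--Lebesgue plus equi-Lipschitz argument giving $\sup_x\bigl|\mu(A_k\cap h_x)-\tfrac14\operatorname{vol}(S^n)\bigr|\to0$, and a uniform treatment of $n>1$ via the polar angle of $(p_1,p_2)$, which the paper only calls obvious.
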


Notice that the point set representing chosen open half-spaces of an admissible section is always of measure $\frac1{2}{{\text{vol}(S^n)}}$, $d_{\mathcal M}(S^n,\mathcal M(S^n))\le\frac\pi2$ is obvious. 

The reason of $
d_\mathcal M(S^n,\mathcal M(S^n))
\ge
\frac{\pi}{2}
$
is that the equation is a limit of a sequence. For $S^1$ case, we nomalize it into an interval $[0,1)$ and do as Cantor set. But we don't delete parts here, instead we sent it to opposite in $S^1$, or equivalently $x\to (x+\frac12)  \text{ mod } 1$ in interval.

\begin{figure}[htbp]
  \centering
  \includegraphics[width=0.6\textwidth]{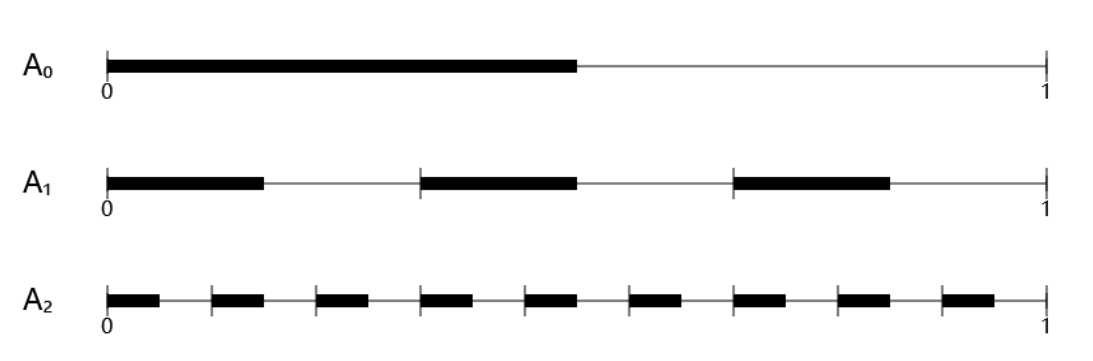}
  \caption{Sequence to the Equation}
  \label{Sequence to the Equation}
\end{figure}

Note that there is no limit set like the Cantor set. This case, and cases for $n>1$ are obvious then.

\subsection{Medianization for Euclidean Space}
There is a measured wall structure for $\mathbb{R}^n$ embedded in $\mathbb{R}^{n+1}$ as $\mathbb R^n_*=\{(x_1,\dots,x_{n+1}):x_1=1 \}$.

For any point $x\in \mathbb R^n$ with $x_2^2+\cdots+x_{n+1}^2\ne0$, we define the open half-space as $h_x=\{y\in \mathbb{R}_*^n:\ y\cdot x=x_1+x_2y_2+\dots+x_ny_n>0\}$, then $ {h_x}^c\subset \mathbb R^n_*$ is a closed half-space. The set of walls is $\mathcal W={\{h_x,{h_x}^c\},\ p\in \mathbb R^n_*}$. The reason to define like this is that when $n\ge1$, a totally geodesic embedded $R^{n-1}$ in $\mathbb R^n$ is in one-to-one correspondence to a $n$-dimensional hyperplane crossing the origin point and the $\mathbb R^n_*$.

The orthogonal complement of a such hyperplane intersects $S^n$ at exactly two opposite points, except for the north pole and south pole. So the space of open half-spaces of $\mathbb{R}^n$ is exactly $S^n-\{\text{north pole,south pole}\}$. It needs some work to see the measured wall structure. $\mathbb R^n$ has unbounded metric, but $S^{n}$ has finite measure. So we should move on to $\mathbb{R}\times S^{n-1} $ which is homeomorphic to $S^n-\{\text{north pole,south pole}\}$. We write $x=(x_1,\dots,x_n)\in \mathbb{R}\times S^{n-1}$, such that $x_2^2+\dots +x_{n+1}^2=1$, then this point $x$ is in one-to-one correspondence with open half-spaces of $\mathbb{R}^n_*$:
\[h_x=\{y\in \mathbb{R}^n_*:y\cdot x=x_1+x_2y_2+\dots+x_ny_n>0\}\]

To define a measure on the space of half-spaces, we just use the product measure of measures of $\mathbb{R}$ and $S^n$. That is, we denote $
x=(r,\omega)\in \mathbb R\times S^{n-1}
$, and the open half-space is $
h_{r,\omega}
=
\{y\in\mathbb R^n_*:\langle y,\omega\rangle+r>0\}.
$

We claim that the measure
$
d\mu(r,\omega)=dr\,d\omega,
$
where $dr$ is Lebesgue measure on $\mathbb R$ and $d\omega$ is the standard rotation-invariant measure on $S^{n-1}$, is invariant under the Euclidean group
$
G=\mathbb R^n\rtimes O(n).
$
That is, $
\mu(gE)=\mu(E),
$ for every measurable set $E\subseteq \mathbb R\times S^{n-1}$ and every $g\in G$. Equivalently,
\[
\int_{\mathbb R\times S^{n-1}} f(g\cdot \xi)\,d\mu(\xi)
=
\int_{\mathbb R\times S^{n-1}} f(\xi)\,d\mu(\xi)
\]
for every compactly supported measurable function $f$.

\begin{proof}

Let
$
g(y)=Ay+a,
$
where $A\in O(n)$ and $a\in\mathbb R^n$. A point $z$ belongs to $gh_{r,\omega}$ if and only if
$
z=Ay+a
$
for some $y\in h_{r,\omega}$.
That is,
\[
0
<
\langle A^{-1}(z-a),\omega\rangle+r=
\langle z-a,A\omega\rangle\\=
\langle z,A\omega\rangle
-\langle a,A\omega\rangle
+r.
\]
So
$
gh_{r,\omega}
=
h_{\,r-\langle a,A\omega\rangle,\;A\omega},
$
and the induced action on half-spaces is
\[
g\cdot(r,\omega)
=
\bigl(r-\langle a,A\omega\rangle,\;A\omega\bigr).
\]

Let $f$ be a compactly supported measurable function on $\mathbb R\times S^{n-1}$, then
\[
\int_{\mathbb R\times S^{n-1}}
f(g\cdot(r,\omega))
\,dr\,d\omega
=
\int_{\mathbb R\times S^{n-1}}
f\bigl(r-\langle a,A\omega\rangle,\;A\omega\bigr)
\,dr\,d\omega
\]
\[=
\int_{\mathbb R\times S^{n-1}}
f\bigl(r-\langle a,A\omega\rangle,\;A\omega\bigr)
\,d(r-\langle a,A\omega\rangle)\,d(A\omega)
=
\int_{\mathbb R\times S^{n-1}}
f(r,\omega)
\,dr\,d\omega.
\]
\end{proof}
We know that in Euclidean space, if two distinct codimension-1 hyperplanes are unparallel, then they intersects with each orther. $h_{p}\subset h_{q}$ if and only if $ \omega_p=\omega_q \text{ and } r_p<r_q$. So to determine an admissible section $\sigma$ is for every direction $\omega \in S^{n-1}$ to choose a dividing point $m(\omega)\in \mathbb{R}\cup \{\pm \infty\}$ for choosing a half-space. That is, view $\sigma$ as the half-spaces it choose, then $\sigma\cap(\mathbb{R}\times\{\omega\})=\{x>m(\omega)\}\times\{\omega\} $. For a point $y\in \mathbb R^n_*$, the point section is $\sigma_y=\left\{(r,\omega):\right \langle y,\omega\rangle+r>0\}$.

For $n=1$, there is only a direction, and an admissible section is to choose a point (maybe $\pm \infty$) to divide the choice, and the median section of three point section is the point section of the median point. We have
\[
d_\mathbb{R}(x,y) = \frac{1}{2}\mu_{\mathbb R\times \{p_1,p_2\}}H(x|y),
\quad
d_\mathcal M(\mathbb {R}^n,\mathcal M(\mathbb {R}^n))
=d_\mathcal M(\mathbb {I}^n,\mathcal M(\mathbb {I}^n))
=
0
\]
When $n\ge 2$, \[
d_{\mathbb{R}^n}(x,y) = \frac{4\text{vol}(S^{n-2})}{n-1}\mu_{\mathbb R\times S^{n-1}}H(x|y)
\]

The Hausdorff distance is infinite. For $n=2$ case, recall the construction ${A_n}$ for shperes $S^{n-1}$, and define $B_{m,n}$ in this way: $m(\omega)=m$ if $\omega\in A_n$, otherwise $m(\omega)=-m$. The rest is obvious.
\begin{fact}
\[
d_\mathcal M(\mathbb {R}^n,\mathcal M(\mathbb {R}^n))
=
+\infty\qquad(n\ge2)
\]
\end{fact}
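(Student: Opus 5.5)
The plan is to exhibit, for every $M>0$, an admissible section $\tau_M\in\mathcal M(\mathbb R^n)$ whose distance to \emph{every} point section is at least a fixed positive multiple of $M$. Letting $M\to\infty$ then makes the Hausdorff distance infinite. Following the hint in the text, one could build the sets $B_{m,n}$ from the Cantor-type sets $A_n$ of the sphere discussion. I expect that any measurable ``half'' of the sphere already suffices, for instance a hemisphere, and I will use that.

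\textbf{Step 1: admissible sections are odd threshold functions.} In the parametrization $h_{r,\omega}=\{y:\langle y,\omega\rangle+r>0\}$, the complement of $h_{r,\omega}$ is, up to the measure-zero wall itself, $h_{-r,-\omega}$. As noted above, $h_p\subset h_q$ only when $\omega_p=\omega_q$ and $r_p<r_q$. Hence an admissible section is given by a threshold $m:S^{n-1}\to\mathbb R\cup\{\pm\infty\}$, with $\sigma\cap(\mathbb R\times\{\omega\})=(m(\omega),\infty)\times\{\omega\}$. Choosing exactly one side of each wall forces $m(-\omega)=-m(\omega)$ almost everywhere.

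Conversely, every measurable odd $m$ defines an admissible section. Indeed, the inclusion relation only compares half-spaces with the same $\omega$, so upward closure holds automatically. This is the key difference from $\mathbb I^n$: there is no Lipschitz constraint at all, and none of the compactness that produced Lemma~\ref{lem:border function upper bound} is available.

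\textbf{Step 2: the construction and the distance formula.} Fix $M>0$ and set $m_M(\omega)=M\operatorname{sgn}(\omega_1)$, which is odd and measurable. The point section $\sigma_y$ has threshold $m_y(\omega)=-\langle y,\omega\rangle$. The two sections disagree exactly between the two thresholds on each fibre, so
\[
\mu(\sigma_{m_M}\triangle\sigma_y)=\int_{S^{n-1}}\bigl|M\operatorname{sgn}(\omega_1)+\langle y,\omega\rangle\bigr|\,d\omega .
\]
Taking $y=0$ gives $M\operatorname{vol}(S^{n-1})<\infty$, so $\tau_M:=\sigma_{m_M}$ lies in $\mathcal M(\mathbb R^n)$.

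\textbf{Step 3: the uniform lower bound.} I pair $\omega$ with $-\omega$ and restrict to the hemisphere $\omega_1>0$. The integral then becomes
\[
\int_{\omega_1>0}\Bigl(|M+\langle y,\omega\rangle|+|M-\langle y,\omega\rangle|\Bigr)\,d\omega .
\]
Since $|M+t|+|M-t|\ge 2M$ for every real $t$, this is at least $M\operatorname{vol}(S^{n-1})$, independently of $y$. After multiplying by the normalizing constant relating $\mu$ to $d_{\mathcal M}$, this gives $\inf_{y}d_{\mathcal M}(\tau_M,\sigma_y)\ge cM$ with $c>0$ depending only on $n$. Therefore $d_{\mathcal M}(\mathbb R^n,\mathcal M(\mathbb R^n))\ge\sup_M cM=+\infty$.

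\textbf{Main obstacle.} The only delicate point is Step 1: confirming that oddness of $m$ is the \emph{only} admissibility constraint, and that modifications on null sets (the walls $r=m(\omega)$ themselves, and the fibres over $\omega_1=0$) do not matter. Once that is settled, the elementary inequality in Step 3 does the rest. Note that the argument needs $n\ge2$, so that $\omega$ and $-\omega$ vary over a continuum rather than the two points $\{\pm1\}$ of $S^0$.
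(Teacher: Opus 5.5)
\textbf{Gap in Step 3.} Step 3 does not work as written. Both thresholds $m_M$ and $m_y(\omega)=-\langle y,\omega\rangle$ are odd, so the integrand $|m_M(\omega)-m_y(\omega)|$ is \emph{even} in $\omega$. For $\omega_1>0$ its value at $-\omega$ is $|-M-\langle y,\omega\rangle|=|M+\langle y,\omega\rangle|$, not $|M-\langle y,\omega\rangle|$. The paired integral is therefore $2\int_{\omega_1>0}|M+\langle y,\omega\rangle|\,d\omega$. Choosing $y$ with $y_1<0$ cancels part of $M$ over the whole hemisphere, and the inequality $|M+t|+|M-t|\ge 2M$ never applies.

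The bound $\inf_y\mu(\tau_M\triangle\sigma_y)\ge M\operatorname{vol}(S^{n-1})$ is in fact false. For $n=2$ and $y=-Me_1$ you get $M\int_0^{2\pi}|\operatorname{sgn}(\cos\theta)-\cos\theta|\,d\theta=2(\pi-2)M<2\pi M$. The true infimum is $2\pi M/3$, attained at $y=-\tfrac{2}{\sqrt3}Me_1$. A telltale sign is that your Step 3 never uses $n\ge2$. Run verbatim for $n=1$, it would give $\inf_y\ge 2M$, yet there $\tau_M=\sigma_{-M}$ is itself a point section (consistent with $d_{\mathcal M}(\mathbb R,\mathcal M(\mathbb R))=0$).

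\textbf{How to repair it.} Your construction is still sound: Steps 1 and 2 are correct, and Step 1 is not the delicate point. What is missing is a real argument that no linear function approximates $\operatorname{sgn}(\omega_1)$ well in $L^1(S^{n-1})$.

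By homogeneity, $\mu(\tau_M\triangle\sigma_y)=M\,F(y/M)$ with $F(z)=\int_{S^{n-1}}|\operatorname{sgn}(\omega_1)+\langle z,\omega\rangle|\,d\omega$. The function $F$ is $\operatorname{vol}(S^{n-1})$-Lipschitz. It is also coercive, since $F(z)\ge |z|\int_{S^{n-1}}|\omega_1|\,d\omega-\operatorname{vol}(S^{n-1})$. Hence $F$ attains a minimum $c_0$.

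Moreover $c_0>0$. If $F(z_0)=0$, then $\operatorname{sgn}(\omega_1)=-\langle z_0,\omega\rangle$ almost everywhere. This is impossible for a continuous right-hand side when $n\ge2$: every neighbourhood of a point of the nonempty great sphere $\{\omega_1=0\}$ meets both $\{\omega_1>0\}$ and $\{\omega_1<0\}$ in positive measure. This is exactly where $n\ge2$ enters. It follows that $\inf_y d_{\mathcal M}(\tau_M,\sigma_y)\ge c\,c_0M\to\infty$.

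The paper uses the same scaling idea, with the Cantor-type odd sets $A_n$ from the sphere discussion in place of your hemisphere. The hemisphere suffices, but in either case the positivity of this $L^1$-distance is the real content. It has to be proved; it cannot be read off from a pointwise inequality.
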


\subsection{Easy Proof of Lemma 1.3}
We introduce another coordinate and offer a simpler proof of Lemma 1.3. 
We use that
$
\mathrm{dS}^n \cong \mathbb R\times S^{n-1},
$ by
\[
(\sinh r,\cosh r\,\omega)\leftrightarrow (\sinh r,\omega),\, r\in \mathbb{R},\, \omega\in S^{n-1}
\]

The $SO(n,1)$-invariant measure on the space of $\mathrm{dS}^n$ is
\[
d\mu(r,\omega)
=
\cosh^{\,n-1}(r)\,dr\,d\omega,
\]
where $d\omega$ denotes the standard volume measure on $S^{n-1}$.

Let
$
p=(x_0,x_1,\ldots,x_n)\in \mathbb{I}^n,
\ 
x_0^2-\sum_{i=1}^{n}x_i^2=1.
$
The set of all hyperplanes passing through $p$ is bounded by the following set of open half-spaces
\[
\left\{
(r,\omega)\in \mathbb R\times S^{n-1}
:
r
=
\operatorname{arctanh}
\!\left(
\frac{\sum_{i=1}^{n}x_i\omega_i}{x_0}
\right)
\right\}
\]

Then we proof Lemma 1.3 in a simple way.
\begin{proof}
Let
$
p_0=(1,0,\dots,0)
$
minimizing
$
D(p)=d_{\mathcal M}(\tau,\sigma_p).
$

Move \(p_0\) a small distance \(s\) in direction
$
\xi\in S^{n-1},
$
then
$
p_s =(\cosh s,\sinh s\,\xi).
$

For a wall
$
u(r,\omega)=(\sinh r,\cosh r\,\omega),
$
the new point section \(\sigma_{p_s}\) has dividing point \(a_s(\omega)\) determined by
$
\langle p_s,u(r,\omega)\rangle=0.
$
Thus
\[
-\cosh s\sinh r+\sinh s\cosh r\,\langle \xi,\omega\rangle=0
\implies
a_s(\omega)
=
\operatorname{artanh}
\big(\tanh s\,\langle \xi,\omega\rangle\big)
\]

For small \(s\),
$
a_s(\omega)=s\langle \xi,\omega\rangle+O(s^3),
$ so
\[
D(p_s)
=
\int_{S^{n-1}}
\left|
\int_{a_s(\omega)}^{b(\omega)}
\cosh^{n-1}r\,dr
\right|
d\omega
\]

Assuming \(b(\omega)\neq 0\) almost everywhere, we get at \(s=0\) 
\[
\frac{d}{ds}\bigg|_{s=0}D(p_s)
=
-\int_{S^{n-1}}
\operatorname{sgn}(b(\omega))
\langle \xi,\omega\rangle
\,d\omega
\]

So \(p_0\) minimizes \(D\) only if this vanishes for every
\(\xi\in S^{n-1}\). Equivalently,
\[
\int_{S^{n-1}}
\operatorname{sgn}(b(\omega))\,\omega\,d\omega
=
0
\]
\end{proof}

\subsection{The Nowhere-Zero Assumption of Border Function}
To make the locally minimum condition easy, we assumed that the border function is not zero almost everywhere on $S^{n-1}$. Now we prove that this assumption doesn't change our calculation of the Hausdorff distance.

The idea is that when $n\ge2$ we can interrupt the border function a bit while keeping the border function satisfying the locally minimum condition. The interruption is small, so the $\beta$ is still 1-Lipschitz, and the interrupted section is arbitrarily close to the original one. By triangle inequation, its distance to a fixed point section changes arbitrarily small and controled by the interruption uniformly, so as its distance to the embedded $\mathbb{I}^n$. The Hausdorff distance is a supreme over the set of some sections, so by calcyulating over sets of almost nowhere-zero border function we get the same Hausdorff distance.

Firstly we prove that the interruption is possible. The first step is that an interruption is possible on an open sphere where it is zero. Let's assume the open sphere $S$ centers at $(1,0,\dots ,0)$, then by oddness the bordered function is zero on $-S$ who centers at $(-1,0,\dots ,0)$. The $n$ axises or coordinates divide $S\cup -S$ into $2^n$ symmetrical components. We note one of them as $C$. We can interrupt the border function on $C$ a bit, so that it keep $\beta$ 1-Lipschitz, and is not zero on C, and is zero on border of $C$. By symmetry we copy the interruption to other components up to sign. Then we decomposite the locally minimum condition into coordinates, and will find each interruption contribute to a fix coordinate with the same value up to sign. The next step is to change the sign of these interruption, so that the border function remains odd, and the $\beta$ remain 1-Lipchitz, and the locally minimum condition holds. This is possible due to the following fact.
\begin{fact}
Let \(n \ge 3\). There exists a map
$
f:\{-1,1\}^n \to \{-1,1\}
$
such that
\[
f(a)=-f(-a),
\qquad
\sum_{a\in\{-1,1\}^n} f(a)a=(0,\dots,0)
\]
\end{fact}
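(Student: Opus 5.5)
The plan is to give an explicit parity (character) function on the cube, namely
\[
f(a)=a_1a_2a_3,\qquad a=(a_1,\dots,a_n)\in\{-1,1\}^n .
\]
This uses exactly the assumption \(n\ge 3\), since it needs three distinct coordinates. The two required properties then reduce to the standard orthogonality of characters on \(\{-1,1\}^n\).

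\textbf{Oddness.} Since \(f\) is a product of an odd number of coordinates,
\[
f(-a)=(-a_1)(-a_2)(-a_3)=-a_1a_2a_3=-f(a).
\]
Moreover \(f\) takes values in \(\{-1,1\}\), as each \(a_j\in\{-1,1\}\).

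\textbf{Vanishing of the vector sum.} Fix a coordinate \(i\). The \(i\)-th component of \(\sum_a f(a)a\) is \(\sum_a a_1a_2a_3a_i\), and we split into two cases.
\begin{itemize}
\item If \(i\in\{1,2,3\}\), then \(a_i^2=1\), so the summand is the product of the two remaining indices among \(\{1,2,3\}\). This is a product \(a_ja_k\) with \(j\ne k\).
\item If \(i\notin\{1,2,3\}\), the summand is a product of four distinct coordinates.
\end{itemize}
In both cases the summand is \(\prod_{j\in T}a_j\) for a nonempty set \(T\subseteq\{1,\dots,n\}\). The sum over the cube factorizes as
\[
\sum_{a\in\{-1,1\}^n}\prod_{j\in T}a_j=\prod_{j\in T}\Bigl(\sum_{a_j=\pm1}a_j\Bigr)\cdot 2^{\,n-|T|}=0,
\]
because each inner factor is \(1+(-1)=0\). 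Hence every coordinate of \(\sum_a f(a)a\) vanishes, which is the claim.

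\textbf{Why \(n\ge 3\) is needed.} For \(n=2\) the statement fails, which explains the hypothesis. An odd \(f\) on \(\{-1,1\}^2\) is determined by its values \(\epsilon_1=f(1,1)\) and \(\epsilon_2=f(1,-1)\). The sum then equals
\[
2\bigl(\epsilon_1(1,1)+\epsilon_2(1,-1)\bigr),
\]
whose first coordinate is \(2(\epsilon_1+\epsilon_2)\) and second is \(2(\epsilon_1-\epsilon_2)\). These cannot both vanish, since \(\epsilon_1,\epsilon_2\in\{\pm1\}\).

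There is no real obstacle; the only step needing care is choosing a character of odd degree (so that \(f\) is odd) that is not itself a single coordinate, since a single coordinate would fail orthogonality against \(a_i\). Degree three is the smallest odd degree larger than one, which is exactly where \(n\ge3\) enters.
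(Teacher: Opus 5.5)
Your proof is correct and is essentially the paper's own argument: the paper also takes $f(a)=a_1a_2a_3$, observes oddness, and checks coordinatewise that $\sum_a a_1a_2a_3a_j$ reduces to a sum over the cube of a product of distinct coordinates, which is zero. Your factorization justification of that vanishing and your explicit verification that no such $f$ exists for $n=2$ are correct additions; the paper simply asserts the $n=2$ failure without proof.
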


\begin{proof}
$
f(a_1,\dots,a_n)=a_1a_2a_3$ satisfies these conditions.
Oddness is obvious.

For the \(j\)-th coordinate, we have
\[
\sum_{a\in\{-1,1\}^n} f(a)a_j
=
\sum_{a\in\{-1,1\}^n} a_1a_2a_3a_j.
\]

If \(j=1\), it becomes
$
\sum_{a\in\{-1,1\}^n} a_2a_3=0.
$

If \(j=2\), it becomes
$
\sum_{a\in\{-1,1\}^n} a_1a_3=0.
$

If \(j=3\), it becomes
$
\sum_{a\in\{-1,1\}^n} a_1a_2=0.
$

If \(j>3\), it becomes
$
\sum_{a\in\{-1,1\}^n} a_1a_2a_3a_j=0.
$
\end{proof}
The value of $f$ indicate the sign of the corresponding interruption. For $n=2$, such $f$ doesn't exist, and $S$ is an arc. We divide $S$ and $-S$ into 8 arcs averagely, and assing sign \textbf{+ - + -} on these arcs, and adjust the ratio. Notice that we can change the interruptions by a same small positive, so that $\beta$ is still 1-Lipschitz, and the distance between it and the original section is very small.

Now that we can do a such arbitrarily small interruption on a sphere, we can repeat it on the inner of thw set where the border function is zero, and take the limit border function. The corresponding section functino is what we want.
For $n=1$, the case is easy and we don't need this interruption.
\subsection{The Gram Matrix}

Let \(V\) be a finite-dimensional real vector space equipped with a symmetric bilinear form
\(\langle\cdot,\cdot\rangle\) (not necessarily positive definite).
For an ordered set of vectors \(v_1,\dots,v_k \in V\), their \emph{Gram matrix} is the
\(k\times k\) matrix
$
G(v_1,\dots,v_k) = \bigl( \langle v_i, v_j \rangle \bigr)_{i,j=1}^k .
$

In our setting, \(V = \mathbb{R}^{n+1}\) with
$
\langle x,y\rangle = -x_0y_0 + x_1y_1 + \cdots + x_ny_n .
$

For two vectors \(u,v \in \mathrm{dS}^n \subset \mathbb{R}^{n+1}\) we get
\[
G(u,v) =
\begin{pmatrix}
\langle u,u\rangle & \langle u,v\rangle \\[2pt]
\langle v,u\rangle & \langle v,v\rangle
\end{pmatrix}
=
\begin{pmatrix}
1 & a \\
a & 1
\end{pmatrix},
\qquad a = \langle u,v\rangle .
\]

\begin{prop}[Basic Properties of the Gram Matrix]
Let \(G\) be the Gram matrix of \(v_1,\dots,v_k\).
\begin{enumerate}
\item \emph{Symmetry.} \(G\) is symmetric because the bilinear form is symmetric:
\(\langle v_i,v_j\rangle = \langle v_j,v_i\rangle\).
\item \emph{Change of basis.} If we express new vectors as combinations
\(w_i = \sum_{j=1}^k c_{ij} v_j\), then
\[
\langle w_i,w_j\rangle = \sum_{p,q=1}^k c_{ip} c_{jq} \langle v_p,v_q\rangle .
\]
In matrix form, \(G_{\text{new}} = C\, G\, C^{\mathsf{T}}\), where \(C = (c_{ij})\).
\item \emph{Degeneracy.} \(\det G = 0\) if and only if the subspace \(\operatorname{span}\{v_1,\dots,v_k\}\) is degenerate. That is, there is a vector $v_0$ in it such that $\langle v_0,v_1\rangle=\dots=\langle v_0,v_k\rangle=0$.
\end{enumerate}
\end{prop}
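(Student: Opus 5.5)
The statement is the last proposition: the basic properties of the Gram matrix (symmetry, change of basis, degeneracy). These are linear-algebra facts, and I would prove them directly from bilinearity, in order.

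\emph{Symmetry.} The $(i,j)$ entry of $G$ is $\langle v_i,v_j\rangle$. Because the form is symmetric, this equals $\langle v_j,v_i\rangle$, which is the $(j,i)$ entry. Hence $G=G^{\mathsf T}$.

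\emph{Change of basis.} Write $w_i=\sum_p c_{ip}v_p$ and $w_j=\sum_q c_{jq}v_q$. Expanding $\langle w_i,w_j\rangle$ by bilinearity in each slot gives $\sum_{p,q}c_{ip}c_{jq}\langle v_p,v_q\rangle$. Reading $c_{ip}$ as the $(i,p)$ entry of $C$ and $c_{jq}$ as the $(q,j)$ entry of $C^{\mathsf T}$, this double sum is the $(i,j)$ entry of $CGC^{\mathsf T}$.

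\emph{Degeneracy.} This is the only step needing care, because the $v_i$ need not be linearly independent. I would first treat the independent case. Then $\mathrm{span}\{v_1,\dots,v_k\}$ has basis $v_1,\dots,v_k$, and a vector $v_0=\sum_j a_jv_j$ satisfies $\langle v_0,v_i\rangle=0$ for all $i$ exactly when $\sum_j a_j\langle v_j,v_i\rangle=0$ for all $i$. Since $G$ is symmetric, this says $Ga=0$. So a nonzero such $v_0$ exists iff $G$ has nontrivial kernel iff $\det G=0$.

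If the $v_i$ are dependent, there is a nonzero $a$ with $\sum_j a_jv_j=0$. Then $Ga=0$, since $(Ga)_i=\langle v_i,\sum_j a_jv_j\rangle=0$, so $\det G=0$. In this case the statement must be read with ``degenerate'' allowing the trivial relation. Alternatively, one notes that the paper only applies the proposition when $k=2$ and $u\neq\pm v$, in which case the vectors are independent. I expect this interpretive point about dependent families to be the only obstacle. I would state the independence hypothesis explicitly rather than resolve it inside the argument.
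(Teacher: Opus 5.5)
Your proposal is correct. For (1) and (2) it is the same reasoning the paper uses, namely the one-line justifications written into the statement: symmetry of the form, and the bilinear expansion, with $(CGC^{\mathsf T})_{ij}=\sum_{p,q}c_{ip}G_{pq}c_{jq}$. For (3) the paper gives no argument at all. Your identification of the combinations $v_0=\sum_j a_jv_j$ orthogonal to $v_1,\dots,v_k$ with $\ker G$ is the standard proof, and it fills that gap.

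Your caveat is a genuine correction to the statement, not just an interpretive point.

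\begin{itemize}
\item As written, (3) is false for dependent families. Take $v_1=v_2=u\in\mathrm{dS}^n$: then $G=\begin{pmatrix}1&1\\1&1\end{pmatrix}$ is singular, but $\operatorname{span}\{u\}$ is positive definite.
\item The statement should also require $v_0\neq 0$, since $v_0=0$ always works.
\end{itemize}

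There are two clean fixes. Either assume the $v_i$ are linearly independent, or phrase degeneracy as the existence of a nonzero coefficient vector $a$ with $\sum_j a_jv_j$ orthogonal to every $v_i$. The second version is exactly $\ker G\neq 0$ and needs no hypothesis.

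Neither fix affects the paper's use of Gram matrices. For $u,v\in\mathrm{dS}^n$, linear dependence forces $v=\pm u$, hence $|a|=1$, and that is exactly the case the paper handles by hand. When $|a|<1$ or $|a|>1$, $u$ and $v$ are automatically independent.

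The same caveat applies to the paper's subsequent signature theorem. Its count $r+s+t=k$ silently presupposes $\dim\operatorname{span}\{v_1,\dots,v_k\}=k$.
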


The signature of a Gram matrix completely tells the signature of the restriction of the bilinear form on the subspace spanned by the vectors. This is by Sylvester's law of inertia.

\begin{thm}
Let \(\langle\cdot,\cdot\rangle\) be a \textbf{non-degenerate} symmetric bilinear form on
\(\mathbb{R}^N\) of signature \((p,q)\) with \(p+q = N\). Let \(S = \operatorname{span}\{v_1,\dots,v_k\}\) and let \(G\) be the Gram matrix of \(\{v_1,\dots,v_k\}\). Then the restriction \(\langle\cdot,\cdot\rangle|_S\) has signature \((r,s,t)\) (with \(r+s+t=k\), where \(t\) is the degeneracy) exactly when \(G\) has \(r\) positive, \(s\) negative and \(t\) zero eigenvalues (counted with multiplicity).
\end{thm}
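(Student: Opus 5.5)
The approach is to reduce the statement to the classical Sylvester law of inertia by a change of basis inside $S$. First I treat the case where $v_1,\dots,v_k$ are linearly independent, so that they form a basis of $S$ and $G$ is exactly the matrix of the restricted form $\langle\cdot,\cdot\rangle|_S$ in this basis. Since $G$ is real symmetric, the spectral theorem gives an orthogonal matrix $O$ with $OGO^{\mathsf T}=\operatorname{diag}(\lambda_1,\dots,\lambda_k)$. By item (2) of the Basic Properties proposition, this is the Gram matrix of the new basis $w_i=\sum_j O_{ij}v_j$, which is $\langle\cdot,\cdot\rangle$-orthogonal with $\langle w_i,w_i\rangle=\lambda_i$. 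Rescaling each $w_i$ with $\lambda_i\ne0$ by $|\lambda_i|^{-1/2}$ yields a basis in which the form is $\operatorname{diag}(+1,\dots,-1,\dots,0,\dots)$. The numbers of $+1$, $-1$ and $0$ entries are $r$, $s$ and $t$ respectively.

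Second, I invoke Sylvester's law: any two diagonalizing bases of $(S,\langle\cdot,\cdot\rangle|_S)$ have the same counts. I would rederive this quickly. Here $t=\dim\operatorname{rad}(S)$ with $\operatorname{rad}(S)=\{x\in S:\langle x,S\rangle=0\}$, which is intrinsic, in line with item (3). Also $r$ is the maximal dimension of a subspace of $S$ on which the form is positive definite. If $P$ is the span of the $+1$ vectors in one basis and $N'$ the span of the $-1$ and $0$ vectors in another, then $P\cap N'=0$, so $\dim P\le k-\dim N'$. Symmetry gives equality, and $s=k-r-t$ follows. This matches the signature of the restriction with the eigenvalue counts of $G$, counted with multiplicity since $O$ diagonalizes over the full $k$ dimensions.

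The main obstacle is the case where the $v_i$ are linearly dependent. In that case $G$ has extra zero eigenvalues coming from linear relations rather than from degeneracy of the form, and the literal statement must be read with $k$ replaced by the number of vectors and with $t$ counting those kernel directions too. I would handle this by viewing $G$ as the pullback of $\langle\cdot,\cdot\rangle|_S$ under the surjection $\mathbb R^k\to S$, $c\mapsto\sum c_jv_j$. The positive and negative indices of a pulled-back form equal those of the form on $S$, since the kernel of the map lies in the radical of the pullback. Then $r$ and $s$ agree, and $t$ on the Gram side equals $\dim\operatorname{rad}(S)+(k-\dim S)$.

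Non-degeneracy of the ambient form is not needed for the argument. It only matters in the application, where it lets us conclude for $\operatorname{span}(u,v)$ in $\mathbb R^{n,1}$ that positive definiteness of $G(u,v)$ forces the orthogonal complement to have signature $(n-2,1)$, and hence to contain timelike vectors.
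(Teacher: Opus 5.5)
Your proof is correct and is essentially the paper's argument: treat $G$ as the matrix of $\langle\cdot,\cdot\rangle|_S$ in the basis $v_1,\dots,v_k$, diagonalize by a change of basis (a congruence), and invoke Sylvester's law of inertia. Your orthogonal diagonalization makes the match between diagonal entries and eigenvalues of $G$ immediate, and your two additions are both right: the pullback treatment of linearly dependent $v_i$ (the paper tacitly assumes a basis, and the statement must then read $t_G=\dim\operatorname{rad}(S)+(k-\dim S)$), and the remark that non-degeneracy of the ambient form is never used.
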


\begin{proof}
Consider the restriction of the bilinear form to \(S\). For any vectors
\(x = \sum_{i=1}^k x_i v_i\) and \(y = \sum_{i=1}^k y_i v_i\) in \(S\) we have
\[
\langle x,y\rangle = \sum_{i,j=1}^k x_i y_j \langle v_i,v_j\rangle
= \mathbf{x}^{\mathsf{T}} G \mathbf{y},
\]
where \(\mathbf{x} = (x_1,\dots,x_k)^{\mathsf{T}}\) and similarly for \(\mathbf{y}\).

Now choose any other basis \(\mathcal{B}'\) of \(S\). If \(P\) is the change-of-basis
matrix from \(\mathcal{B}\) to \(\mathcal{B}'\), then the matrix of the form in the new
basis is
$
G' = P^{\mathsf{T}} G P.
$

By Sylvester's law of inertia, $G$ and $G'$ have the same numbers of positive, negative and zero eigenvalues. The signature of the bilinear form on \(S\) is the triple \((r,s,t)\) of positive, negative and zero entries in any diagonal matrix representing the form in an orthogonal basis. By diagonalisation of a symmetric matrix, there exists an invertible matrix \(P\) such that\(P^{\mathsf{T}} G P\) is diagonal with entries \(1,-1,0\). The numbers of such entries are precisely the numbers of positive, negative and zero eigenvalues of \(G\). By Sylvester's law guarantees the \((r,s,t)\) is unique.
\end{proof}

In particular, if \(S\) is non-degenerate, its signature equals the numbers of
positive and negative eigenvalues of \(G\). We also need the behaviour of the signature when passing to the orthogonal complement.

\begin{thm}[Signature of the Orthogonal Complement]
Equip \(\mathbb{R}^N\) with a non-degenerate symmetric bilinear form of
signature \((p,q)\). Let \(S\) be a non-degenerate subspace of dimension \(k\) and
signature \((r,s)\) with \(r+s = k\) (so \(r\) positive, \(s\) negative eigenvalues for
the restriction). Then the orthogonal complement
\[
S^\perp = \{ x\in\mathbb{R}^N : \langle x,v\rangle = 0 \text{ for all } v\in S\}
\]
is also non-degenerate and has signature
$
(p-r,\; q-s).
$
\end{thm}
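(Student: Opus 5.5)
The argument splits into showing that $S^\perp$ is a complement of $S$, and then counting signatures through the resulting orthogonal direct sum, using Sylvester's law of inertia via the preceding theorem on Gram matrices.

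\textbf{Step 1: dimension and direct sum.} Since the form is non-degenerate on $\mathbb{R}^N$, the map $x\mapsto \langle x,\cdot\rangle$ is an isomorphism $\mathbb{R}^N\to(\mathbb{R}^N)^*$. Hence $S^\perp$ is the preimage of the annihilator of $S$, and
\[
\dim S^\perp = N-k .
\]
Non-degeneracy of $S$ means $S\cap S^\perp=\{0\}$: a vector of $S$ orthogonal to all of $S$ lies in the radical of the restricted form, which is trivial. By dimension count,
\[
\mathbb{R}^N = S\oplus S^\perp .
\]

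\textbf{Step 2: non-degeneracy of $S^\perp$.} Let $y\in S^\perp$ be orthogonal to all of $S^\perp$. Then $y$ is also orthogonal to $S$, since $y\in S^\perp$. By the direct sum decomposition, $y$ is therefore orthogonal to all of $\mathbb{R}^N$, so $y=0$ by global non-degeneracy.

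\textbf{Step 3: signatures.} Choose a basis of $S$ diagonalizing the restricted form with $r$ entries equal to $+1$ and $s$ entries equal to $-1$. Similarly, choose a basis of $S^\perp$ diagonalizing its restriction with $r'$ entries $+1$ and $s'$ entries $-1$; there are no zero entries by Step 2. Since $S\perp S^\perp$, the union of these bases is a basis of $\mathbb{R}^N$. The corresponding Gram matrix is block diagonal, equal to $\operatorname{diag}(\pm1)$, with $r+r'$ positive and $s+s'$ negative entries. By Sylvester's law of inertia (the preceding theorem applied with $k=N$), this must equal the global signature $(p,q)$. Hence $r'=p-r$ and $s'=q-s$.

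The main point needing care is Step 1, namely using global non-degeneracy to obtain $\dim S^\perp=N-k$. This is exactly where the hypothesis on the ambient form enters. The rest is bookkeeping with Sylvester's law.
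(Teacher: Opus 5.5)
Your proof is correct and follows the same route as the paper. Both establish $\mathbb{R}^N = S\oplus S^\perp$, deduce non-degeneracy of $S^\perp$ by the same ``orthogonal to $S$ and to $S^\perp$, hence to everything'' argument, and read off $(p-r,q-s)$ from the block-diagonal Gram matrix. The only variation is in Step 1: you get the direct sum from $\dim S^\perp = N-k$ using global non-degeneracy, whereas the paper's lemma projects onto $S$ using only non-degeneracy of $S$. In fact $\dim S^\perp\ge N-k$ holds for any form, so the ambient hypothesis is really needed in your Step 2, not in Step 1 as your closing remark suggests.
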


\begin{proof}
By the lemma below, the form is non-degenerate on $S$ and $S^\perp$, and $
\mathbb{R}^N = S \oplus S^\perp.$

By non-degenerating, we can take a basis of \(S\) so that the Gram matrix of the basis on $S$ is
\(\operatorname{diag}(\underbrace{1,\dots,1}_{r},\underbrace{-1,\dots,-1}_{s})\). Same for $S^\perp$. Since the two subspaces are
orthogonal, the union of two bases is a basis
of \(\mathbb{R}^N\) with block-diagonal Gram matrix
\[
\begin{pmatrix}
I_{r,s} & 0 \\
0 & I_{r',s'}
\end{pmatrix},
\]
And we just count along the diagocal and get
\[
r+r'=p,\qquad
s+s'=q
\]
\end{proof}

\begin{lem}
If a symmetric bilinear form on a finite-dimensional vector space \(V\) is non-degenerate and \(U\) is a non-degenerate subspace, then \(U^\perp\) is also non-degenerate and \(V = U \oplus U^\perp\).
\end{lem}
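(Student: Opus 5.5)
The plan is to prove the statement directly from linear algebra. Since $V$ is finite-dimensional and the form is non-degenerate, the map $\Phi:V\to V^*$, $v\mapsto\langle v,\cdot\rangle$, is an isomorphism. The orthogonal complement $U^\perp$ is exactly the preimage under $\Phi$ of the annihilator $U^0=\{\varphi\in V^*:\varphi|_U=0\}$. Hence
\[
\dim U^\perp=\dim U^0=\dim V-\dim U .
\]
This dimension count is the only place where non-degeneracy of the ambient form is used.

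Next, I use non-degeneracy of $U$. This hypothesis means precisely that the restricted form has trivial radical, that is, $U\cap U^\perp=\{0\}$: a vector of $U$ orthogonal to all of $U$ is zero. Combined with the dimension count, $\dim U+\dim U^\perp=\dim V$ and $U\cap U^\perp=\{0\}$ together give $V=U\oplus U^\perp$.

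Finally, I show that $U^\perp$ is non-degenerate. Take $w\in U^\perp$ with $\langle w,w'\rangle=0$ for all $w'\in U^\perp$. Since $w\in U^\perp$, also $\langle w,u\rangle=0$ for all $u\in U$. Because $V=U+U^\perp$, it follows that $w$ is orthogonal to all of $V$, and non-degeneracy of the ambient form forces $w=0$. An equivalent route is to note $(U^\perp)^\perp\supseteq U$, so equal dimensions give $(U^\perp)^\perp=U$, and then $U^\perp\cap(U^\perp)^\perp=U^\perp\cap U=\{0\}$.

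There is no real obstacle. The only step needing care is the dimension formula for $U^\perp$. One must not use positive definiteness here: for indefinite forms $U^\perp$ can intersect $U$, so the formula has to come from the duality isomorphism and not from an orthogonal projection argument. The hypothesis that $U$ itself is non-degenerate is then exactly what rules out a nontrivial intersection. In the setting of the paper this applies with $V=\mathbb{R}^{n+1}$ and the Lorentz form, for instance to $U=\operatorname{span}(u,v)$ or $U=\mathbb{R}x$, which justifies the previous theorem on signatures of orthogonal complements.
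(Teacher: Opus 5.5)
Your proof is correct. Its last step, that $U^\perp$ is non-degenerate because a vector orthogonal to both $U$ and $U^\perp$ is orthogonal to $V=U+U^\perp$, is the same as the paper's. The difference is in how you obtain $V=U\oplus U^\perp$.

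The paper does not count dimensions. For each $v\in V$ it uses non-degeneracy of the restricted form on $U$ (so that $U\to U^*$ is an isomorphism) to find the unique $u\in U$ with $\langle u,x\rangle=\langle v,x\rangle$ for all $x\in U$. Then $v=u+(v-u)$ with $v-u\in U^\perp$, and directness follows from $U\cap U^\perp=\{0\}$ exactly as in your argument.

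The paper's route shows that the splitting needs only non-degeneracy of $U$, not of $V$, and it gives the projection $v\mapsto u$ explicitly. This also means your remark that the splitting cannot come from an orthogonal projection argument is overstated: such an argument works for indefinite forms as soon as $U$ is non-degenerate. Your route instead uses ambient non-degeneracy to get $\dim U^\perp=\dim V-\dim U$ via the annihilator. That formula holds for every subspace, degenerate or not, and it also yields $(U^\perp)^\perp=U$, so your argument gives more general information.

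One small inaccuracy: the dimension count is not the only place you use non-degeneracy of the ambient form. You use it again to conclude $w=0$ in the final step, as the paper does.
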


\begin{proof}
For any \(v\in V\), there exists a unique \(u\in U\) with \(\langle u,x\rangle = \langle v,x\rangle\) for all \(x\in U\), otherwise the form degenerates on $U$. Then
\(v-u \in U^\perp\) and \(v = u + (v-u)\). Thus \(V = U + U^\perp\). The sum is direct
because \(U \cap U^\perp = \{0\}\) by non-degeneracy of \(U\).

Now suppose \(w \in U^\perp\cap {U^\perp}^\perp\). Then for any
\(v = u + w' \in V\) with \(u\in U,\,w'\in {U^\perp}\) we have \(\langle w,v\rangle = \langle w,u\rangle + \langle w,w'\rangle
= 0+0 = 0\). Since the whole form is
non-degenerate, \(w = 0\). So \(U^\perp\) is non-degenerate.
\end{proof}

For the wall intersection condition, we apply these facts as follows:
\(\mathbb{R}^{n+1}\) has signature \((n,1)\). For \(u,v\in\mathrm{dS}^n\), the subspace \(S = \operatorname{span}\{u,v\}\) has Gram matrix \(\begin{pmatrix}1&a\\a&1\end{pmatrix}\) with \(a = \langle u,v\rangle\).

If \(|a|<1\), then \(\det G = 1-a^2 >0\) and the diagonal entries are positive,
so \(G\) is positive definite. So \(S\) has signature \((2,0)\). By the orthogonal complement theorem, \(S^\perp\) has signature \((n-2,1)\) and therefore contains a hyperbolic point. The hyperplanes intersect(including the identity case).

If \(|a|=1\), then \(\langle u,v\rangle=\pm1\). When \(u=\pm v\) they share the same hyperplanes. When there is a $x\in\mathbb{I}^n$ such that $\langle u,x\rangle=\langle v,x\rangle=0$, then $\langle u\mp v,x\rangle=0$, so $(u-v)\in (\operatorname{span}(x)^\perp)$. Plus we have $\langle u\mp v,u\mp v\rangle=0$. Notice that the form has signature (0,1) on $\operatorname{span}(x)$, so by the theorem we just proved the form is non-degenerate on $(\operatorname{span}(x)^\perp)$. So $u\mp v=0$. So the hyperplanes intersect if and only if they are the same in this case.

If \(|a|>1\), then \(\det G <0\), so \(G\) has one negative eigenvalue (signature \((1,1)\)). Then \(S^\perp\) has signature \((n-1,0)\) and thus cannot have a hyperbolic point. So the hyperplanes do not intersect.

\subsection{Summary of Measures and Metrics}
We have some spaces with a (pseudo-)metric or (pseudo-)measure.
\begin{enumerate}
\item $n$-dimensional real hyperbolic space $\mathbb{I}^n$ has a hyperbolic distance $d_{\mathbb{I}^n}$.
\item The measured walls structure of it has a measure $\mu_\mathcal{W}$.
\item The space of open half-spaces $\mathcal{H}$ has a pulled back measure $\mu_\mathcal{H}$.
\item The de Sitter space of dimension $n$ has a measure $\mu_{\mathrm{dS}^n}$.
\item The space of sections has a metric $d_{Section}$. The medianization $\mathcal{M}(\mathbb{I}^n)$ has a metric $d_\mathcal{M}$.
\end{enumerate}
They are strongly related.
\begin{enumerate}
\item $d_{\mathbb{I}^n}$ is the usual hyperbolic distance. $\mu_{\mathrm{dS}^n}$ is defined.
\item We prove that the $\mathrm{dS}^n$ is a model for $\mathcal{W}$. \[\frac{2\,\operatorname{vol}(S^{n-2})}{n-1}d_\mathcal{W}=\mu_{\mathrm{dS}^n}\]
\item For points $x,y\in \mathbb{I}^n$, we have \[d_{\mathbb{I}^n}(x,y)=\mu_\mathcal{W}(W(x|y))=\frac12\mu_{\mathcal H}(H(x|y))\]
\item For sections $\sigma_1,\sigma_2$, we also separately use these notations to note the open half-spaces they choose, then \[d_{Section}(\sigma_1,\sigma_2)=2\ \mu_\mathcal{H}(\sigma_1\triangle\sigma_2)\]
\item $\mathbb{I}^n$ is embedded isometrically into $\mathcal{M}(\mathbb{I}^n)$. For point $x,y\in \mathbb{I}^n$, we have
\[
d_{\mathbb{I}^n}(x,y)=d_\mathcal{M}(x,y))=d_{\text{Section}}(\sigma_x,\sigma_y)
\]
\end{enumerate}

\end{document}